\numberwithin{equation}{section}
\newcommand{\constant}{\textrm{constant}}
\renewcommand\d{\partial}
\def\eps{\varepsilon }
\renewcommand\d{\partial}
\newcommand\R{\mathbb R}
\newcommand\C{\mathbb C}
\newcommand{\brh}{\overline{\rho}}
\newcommand{\bu}{\overline{u}}
\newcommand{\bh}{\overline{h}}
\def\eps{\varepsilon}
\newcommand\coker{\hbox{\rm Coker}}
 \newcommand\br{\begin{remark}}
\newcommand\er{\end{remark}}
\newcommand\bp{\begin{pmatrix}}
\newcommand\ep{\end{pmatrix}}
\newcommand{\be}{\begin{equation}}
\newcommand{\ee}{\end{equation}}
\newcommand\ba{\begin{equation}\begin{aligned}}
\newcommand\ea{\end{aligned}\end{equation}}
\newcommand{\bap}{\begin{app}}
\newcommand{\eap}{\end{app}}
\newcommand{\begs}{\begin{exams}}
\newcommand{\eegs}{\end{exams}}
\newcommand{\beg}{\begin{example}}
\newcommand{\eeg}{\end{exaplem}}
\newcommand{\bpr}{\begin{proposition}}
\newcommand{\epr}{\end{proposition}}
\newcommand{\bt}{\begin{theorem}}
\newcommand{\et}{\end{theorem}}
\newcommand{\bc}{\begin{corollary}}
\newcommand{\ec}{\end{corollary}}
\newcommand{\bl}{\begin{lemma}}
\newcommand{\el}{\end{lemma}}
\newcommand{\bd}{\begin{definition}}
\newcommand{\ed}{\end{definition}}
\newcommand{\brs}{\begin{remarks}}
\newcommand{\ers}{\end{remarks}}
\newtheorem{theo}{Theorem}[section]
\newtheorem{exams}[theo]{Examples}
\numberwithin{equation}{section}
\newcommand{\ZZ}{{\mathbb Z}}
\newcommand{\const}{\text{\rm constant}}
\newcommand{\Id}{{\rm Id }}
\newcommand{\Range}{{\rm Range }}
\newcommand{\diag}{{\rm diag }}
\newcommand{\blockdiag}{{\rm blockdiag }}
\newcommand{\Span}{{\rm Span }}
\newtheorem{theorem}{Theorem}[section]
\newtheorem{proposition}[theorem]{Proposition}
\newtheorem{corollary}[theorem]{Corollary}
\newtheorem{lemma}[theorem]{Lemma}
\theoremstyle{remark}
\newtheorem{remark}[theorem]{Remark}
\theoremstyle{definition}
\newtheorem{definition}[theorem]{Definition}
\newtheorem{claim}[theorem]{Claim}
\newtheorem{example}[theorem]{Example}
\newtheorem{obs}[theorem]{Observation}
\newcommand{\beq}{\begin{equation}}
\newcommand{\eeq}{\end{equation}}
\newcommand{\vp}{\varphi}
\title{
Transverse bifurcation of viscous slow MHD shocks
%
%
}
\author{Blake Barker}
\address{Brigham Young University, Provo, UT 84602}
\email{blake@math.byu.edu} 
\thanks{Research of B.B. was partially supported
under NSF grants no. DMS-1700279, DMS-1400555, and NSF Postdoctoral Fellowship No. DMS-1400872.} 
\author{Rafael Monteiro}
\address{Mathematics for Advanced Materials-OIL, AIST-Tohoku University, Sendai 980-8577, Japan.}
\email{monteirodasilva-rafael@aist.go.jp} 
\thanks{Research of R.M. was partially supported
under NSF grants no. DMS-0300487 and DMS-0801745, and by an IU COAS Dissertation Year Fellowship (2014-2015).}
\author{Kevin Zumbrun}
\address{Indiana University, Bloomington, IN 47405}
\email{kzumbrun@indiana.edu} 
\thanks{Research of K.Z. was partially supported
under NSF grants no. DMS-1700279 and DMS-1400555.}
\begin{document}

\begin{abstract}
We study by a combination of analytical and numerical Evans function techniques
multi-D viscous and inviscid stability and associated transverse bifurcation of planar 
slow Lax MHD shocks in a channel with periodic boundary conditions.
Notably, this includes the first multi-D numerical Evans function study for viscous MHD.
Our results suggest that, rather than a planar shock, a nonplanar traveling wave with the same normal velocity
is the typical mode of propagation in the slow Lax mode.
Moreover, viscous and inviscid stability transitions appear to agree, answering (for this particular model and setting)
an open question of Zumbrun and Serre.
\end{abstract}
\date{\today}
\maketitle

\tableofcontents

\section{Introduction}
In this paper, continuing and extending investigations of \cite{BFZ,BHZ1,FT,Mo,TZ1,TZ2},
we study by a combination of analytical and numerical Evans function techniques
multi-D viscous and inviscid stability
and associated transverse bifurcation of planar viscous 
slow Lax magnetohydrodynamic (MHD) shocks in a channel with periodic boundary conditions.
Notably, this includes the first multi-D numerical Evans function study for 
viscous MHD, a computationally intensive problem representing the current state of the art,
and, together with the treatment of gas-dynamical shocks in \cite{HLyZ2}, the
first such study for viscous shock profiles of any physical system in multi-D.

We obtain also new detail on the inviscid stability problem, while at the same time unifying and somewhat
simplifying previously obtained results.
In particular, we give a general framework for the treatment of constraints, or involutions \cite{Dafermos-involution},
such as arise in multi-D MHD or elasticity, recovering and extending to the viscous case the fundamental results
obtained by Blokhin et al (see \cite{BT1,FT} and references therein) for inviscid MHD in a way apparently special to that case.
The latter result answers in the affirmative the fundamental open problem posed in \cite{MeZ2} whether Evans function 
stability, in the sense defined there, is necessary as well as sufficient for nonlinear multi-dimensional
stability of viscous shocks in the presence of a constraint.

Our main physical conclusions are two.
First, we make a mathematical connection between the spectral instability 
observed for slow inviscid MHD shocks \cite{BT1,FT} and ``corrugation instabilities'' observed in the astrophysics 
community \cite{IEHDH,SE}, via a viscous bifurcation analysis as in \cite{Mo,TZ1,TZ2}.
Namely, we demonstrate transitions from stability to instability satisfying the bifurcation hypotheses
proposed in \cite{Mo} (parallel magnetic field case) and \cite{TZ2} (nonparallel case), 
implying bifurcation in a mode transverse to the direction of shock propagation, i.e., lying in the direction 
parallel to the front, the first examples for which these scenarios have been shown to occur.
Our results suggest that, {\it rather than a planar shock, a nonplanar ``wrinkled'' or ``corrugated''
traveling wave with nearby normal velocity is the typical mode of propagation in the slow Lax mode.}

Second, continuing 1-D investigations of \cite{BFZ},
we show numerically that the transition to instability for viscous multi-D slow Lax MHD shocks 
coincides with the transition to instability observed in the inviscid case.  As shown in \cite{ZS,Z1,Z2}, 
for a rather general class of physical systems generalizing the ``Kawashima class'' of \cite{Kaw}, 
viscous stability implies inviscid stability: that is, viscous effects may destabilize, but never stabilize a 
planar shock wave.  
The question posed in \cite{ZS,Z1,Z2} whether and under what circumstances these two conditions coincide
is a fundamental open problem in the theory of shock waves.  On the one hand, it is much simpler to determine 
inviscid as opposed to viscous stability, 
so that pre-knowledge that they coincide would be a great help in applications; on the other, destabilization due to
viscous effects would be extremely interesting physically.
Our results here give the first information in this direction for multi-D, and (along with \cite{HLyZ2}) 
a first set of data for multi-D viscous systems.
They are obtained by the introduction of an algorithm for
numerical determination of the ``refined stability condition'' of \cite{ZS,Z2,Z3,MR2448741},
detecting concavity at transition of the associated ``critical'' spectral curve through the origin,
a new tool of general use.
Our results, as in the 1-D study \cite{BFZ}, suggest that {\it viscous and inviscid
stability transitions coincide}.

For simplicity, our investigations here are confined to the 2-D case. 
However, the methods used apply in general dimensions. In the parallel case that is our main object of study, by rotational symmetry about the longitudinal axis, or direction of propagation of the
front, multi-D stability- more generally, multi-D spectrum-  reduces to the 2-D
case, so there is no loss of generality in restricting to 2-D  (cf.
discussion of \cite{FT}). For the nonparallel case, discussed briefly here, 2-D stability is
necessary but not sufficient for multi-D stability in dimensions $d\geq 3$. Likewise, there could conceivably be an earlier bifurcation involving spectral modes associated with the neglected third dimension.  This, and the nonparallel case in general, would be interesting for further study.

\subsection{Problem and background}
The Navier--Stokes, or viscous, equations for isentropic  2-D MHD  are given, in vectorial notation, by
\begin{subequations}\label{eq1}
\begin{align}
\rho_t + \mathrm{div}(\rho u) =& 0\\
(\rho u)_t + \mathrm{div}(\rho u \otimes u-h\otimes h)+\nabla q =& \mu \Delta u + (\eta + \mu) \nabla \mathrm{div}(u) \label{eq1-b}\\
h_t - \nabla \times(u\times h)=& \nu \Delta h \label{eq1-c},
\end{align}
\end{subequations}
where $u = (u_1, u_2)$  is  the   velocity field,   $h = (h_1, h_2)$ is the magnetic field, 
and 
$q = p + \frac{|h|^2}{2}$,
where $p=p(\rho)$ is gas-dynamical pressure \cite{Ba,J,C,Kaw,Da}. 
Here,  $(x_{1},x_2)$ is spatial location and  $t$ is time.  We take $p(\rho)=a\rho^{\gamma}$ corresponding to a $\gamma$-law, or polytropic equation of state. 
The corresponding Euler, or inviscid, equations are given by Eqs. \eqref{eq1} with $\mu=\eta=\nu=0$.
In either (viscous or inviscid) case, the magnetic field must satisfy in addition the constraint
\begin{equation} \label{constraint}
 \mathrm{div}(h)\Big|_{t=0} =0,
\end{equation}
which if satisfied at initial time $t=0$, may be seen to persist for all $t>0$. 

Our aim is to study the spectral stability of both viscous and inviscid planar 
(without loss of generality standing) shock waves $u(x,t)\equiv \overline{u}(x_1)$ in dimension $2$,
either as solutions on the whole space, or - which amounts to restricting Fourier modes to a (discrete) lattice -
as solutions on a two-dimensional channel, $x_1\in \R, x_2\in [0,1]$, with periodic boundary conditions in $x_2$.
In the {\it parallel} case $u=(u_1, 0)$, $h=(h_1, 0)$, Eqs. \eqref{eq1} decouple into the equations of nonmagnetic isentropic gas dynamics
in $(\rho, u)$ and a heat equation for $h$, from which we may readily deduce that the set of parallel planar MHD shocks consists precisely of the
set of nonmagnetic gas-dynamical shocks in $(\rho,u)$, adjoined with $h_1\equiv \const$ (for details, see for instance \cite{BHZ1,MeZ2, FT}).
As existence/transversality of traveling wave profiles for viscous polytropic gas dynamics is well known both in the isentropic \cite{BaLZ2} and nonisentropic
\cite{Gi,We} case, 
one obtains thereby immediately existence/transversality of parallel MHD profiles, and, by perturbation, 
of near-parallel profiles as well; see \cite{BHZ1} for details. 

Shocks in MHD have different types defined by the number of characteristics at plus and minus infinity moving inward 
toward the shock.\footnote{A standard detail suppressed here is that the equations must first be recast in
a form that is hyperbolic and noncharacteristic with respect to shock speed; see \cite{MeZ2} or 
Sections \ref{mathematical_settings} below.}
For parallel shocks, 
this is determined by the strength $\vert h_1\vert$ of the normal magnetic field $(h_1,0)^T$, being ``fast Lax'' type for $0\leq |h_1|< H_*$, ``intermediate'' type for $H_*< |h_1|<H^*$, and ``slow Lax'' type for $H^*<|h_1|$ \cite{FT,MeZ2},
where 
\begin{equation}\label{h_lower_upper_star}
H_* = u_1^+\sqrt{\rho^+}, \qquad H^* = u_1^-\sqrt{\rho^-}
\end{equation}
(for $0 <u_1^+ < u_1^-$; see Lemma \ref{parametrization_lemma:transition_parameter}). 
Fast shocks are somewhat analogous to gas-dynamical shocks, and indeed reduce to this case in the zero-magnetic field limit $\vert h\vert \equiv 0$.
Intermediate shocks are of nonclassical ``overcompressive'' type not appearing in gas dynamics \cite{FL,Z1,Z2,BHZ1}.
Slow shocks are of classical Lax type, but separated in parameter space from the fast type and exhibiting somewhat
different properties.  The patterns of incoming
characteristics at plus and minus infinity for each type are displayed
in Table \ref{table:shock_types} for the case of parallel shocks.

Inviscid numerical studies \cite{F,T} indicate that fast parallel
shocks are typically stable, while slow parallel shocks are typically unstable.
Intermediate shocks, since overcompressive, are always inviscid unstable, and will not be discussed here.
(Nonetheless, they appear to play an important role in viscous behavior \cite{FL,ZS,Z3} where they have
been seen numerically to be at least 1-D stable \cite{BHZ1}).
Indeed, it has been shown analytically \cite{BT2,GaK}
that fast parallel MHD shocks are stable under the gas-dynamical stability condition of Majda \cite{Majda}, notably for a polytropic
equation of state; likewise, it has been shown analytically \cite{BD2,BT1,FT} that slow MHD shocks are unstable in the infinite-magnetic
field limit $|h|\to \infty$.
In particular, in the brief but suggestive paper \cite{FT} Freist\"uhler and Trakhinin, among other things,
showed analytically the inviscid instability of slow Lax shocks for parallel MHD for sufficiently large magnetic field,
extending to the parallel case (degenerate in this context \cite{FT}) the fundamental results of Blokhin et 
al \cite{BD2,BT1}.

The result of Freist\"uhler and Trakhinin \cite{FT} corroborates and puts on more solid mathematical ground an 
earlier study on instability of slow planar shocks in MHD performed by Stone and Edelman in 
the setting of astrophysics \cite{SE}, 
where it is thought to play a role for example in the dynamics of accretion disks of binary dwarf stars.
According to \cite{SE}, the loss of stability through oscillations in the slow magnetosonic shock front is known
 as \textit{corrugation instability} \cite{SE}.
Though most of Stone and Edelman's results rely on formal linear stability analysis, they also study the phenomena numerically through a time evolution code. 
The latter experiments suggest that the observed oscillatory linear instabilities
result at nonlinear level in ``fingers''
that end up destroying the planar structure of the shock front.
That is, the numerical results \cite[Section 3.1 and 3.3]{SE} of Stone and Edelman indicate, further,
that onset of instability 
is associated with loss of planar structure of the viscous profile, i.e., appearance of the above-mentioned corrugations. 

The possibility of this latter phenomenon has been verified rigorously in the form of a steady transverse
bifurcation in a general $\mathcal{O}$(2)-symmetric strictly parabolic system of conservation laws \cite{Mo} relevant to the parallel MHD case, 
under appropriate spectral bifurcation conditions,
namely, that transition to instability occurs through a pair of real eigenvalues corresponding to nonzero 
transverse Fourier modes passing through the origin.
In the non-$\mathcal{O}$(2) symmetric case, corresponding to nonparallel magnetic field, a similar Hopf bifurcation result was shown in \cite{TZ2}, 
under the assumption that loss of stability occurs through the passage of a complex conjugate pair of eigenvalues 
associated with nonzero transverse modes.  Our ultimate goal 
is to verify these spectral scenarios by a detailed numerical study of the eigenmodes of the linearized operator about the shock.

\subsection{Main results and outline of the paper} 
The first logical step in this work consists of combining the analytical conclusions of \cite{FT} 
of inviscid instability in the infinite-magnetic field limit with numerical Evans function results showing 
that slow Lax shocks can be stable for smaller magnetic fields. 
Putting these observations together, one may conclude the existence of a stability transition, 
associated with which one might hope to observe bifurcation in wave structure.  
This is far from obvious at the inviscid level, where such transitions are associated with infinitely many Fourier modes simultaneously entering the right half of the complex plane (see \cite{Benzoni}); nor is it clear a priori that there is a corresponding stability transition at
the viscous level, since viscous and inviscid spectra can differ greatly at mid and high frequencies.
Nevertheless, by  the result of Zumbrun and Serre \cite[Proposition 5.3]{ZS} connecting  viscous and inviscid spectra 
in the low frequency regime, one may conjecture the associated appearance of more standard bifurcations in the better-behaved viscous case, involving finitely many {\it low-frequency} 
modes; see Section \ref{discussion} or \cite{Z4} for further discussion.
In the simplest situation that the single (necessarily real) double eigenvalue pair (double by $\mathcal{O}$(2) symmetry) for large magnetic field moves into the stable half plane
as magnetic field is decreased, without meeting any other 
eigenvalues along the way, this would necessarily be a ``steady'' spectral bifurcation, passing through $\lambda=0$.  
This simple scenario is likewise not a priori guaranteed, but our numerical investigations confirm that it is indeed what occurs. 

To carry out these numerical investigations requires some interesting extensions of the 
standard Evans function framework \cite{GZ,Z1,Z2,Z3, ZS}
to handle the presence of constraints such as \eqref{constraint}, similar to what was done for inviscid MHD 
by Blokhin et al \cite{BT1,FT}, and (partially) for viscous MHD by M\'etivier et al \cite{MeZ2}.
In the process, we unify and simplify these previous analyses, at the same time
obtaining a new formulation of the MHD equations- the ``$\beta$-model-'' 
that is particularly convenient for numerics, combining in one model the desirable properties
of noncharacteristicity, hyperbolicity, and conservation form;  this is developed in Section \ref{mathematical_settings}.

\begin{SCfigure}
  \centering
    \caption{Log-log plot of the zero of the Lopatinsky determinant $\lambda$ against $\eps:= \frac{1}{h_1}$ when $\gamma = 5/3$ and $u_1^+ = 0.6$. Solid dots correspond to our numerical approximation of the root, open circles to the prediction given by the asymptotic expansion, and asterisks to the description given in  \cite[Equation (61)]{FT}. The value of $\lambda_2$, approximated via $\lambda_2 = \lambda^{num}/\eps^2$, is approximately 0.0836. 
}
\label{fig157}
\includegraphics[scale=0.5]{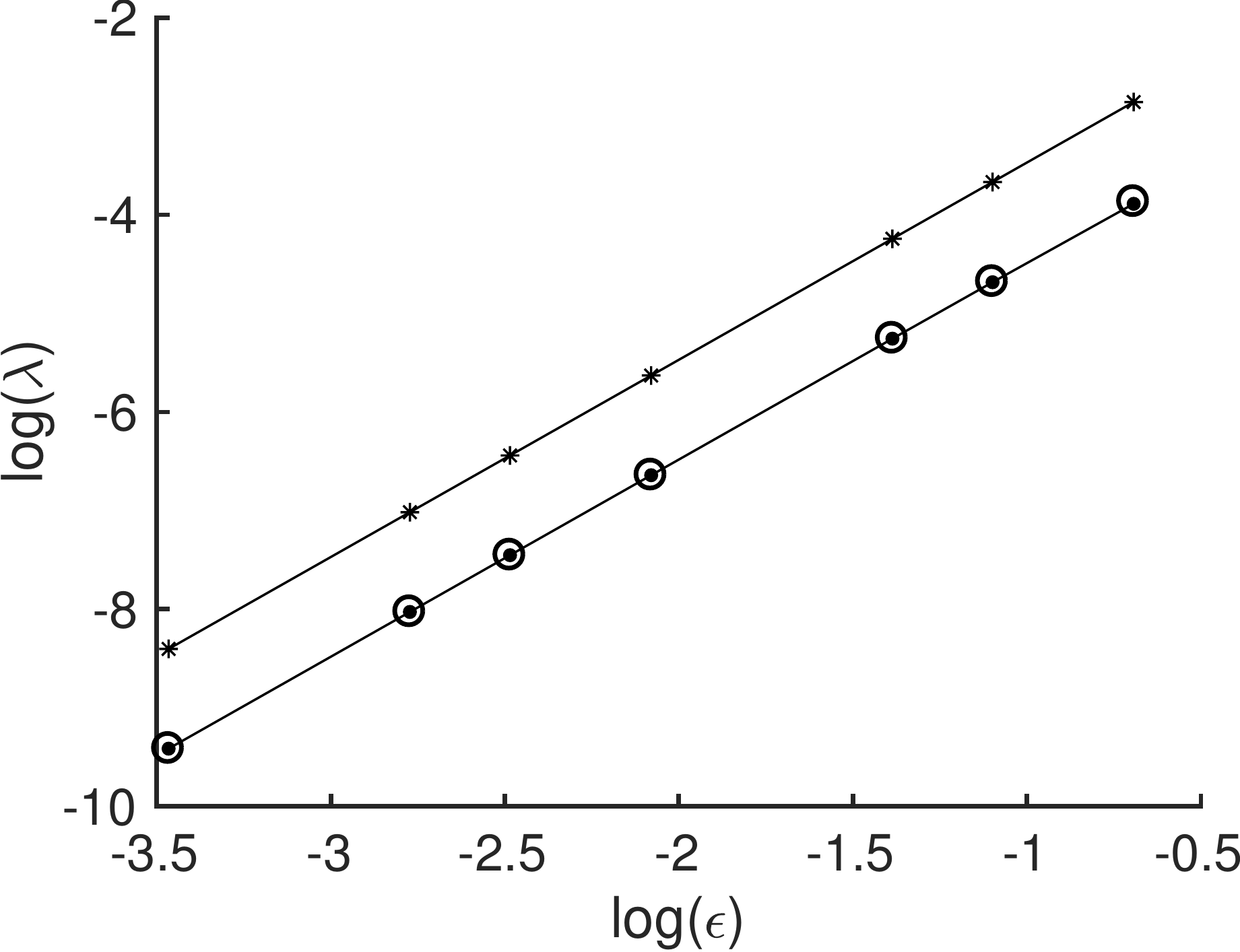}
\end{SCfigure}

To describe the main issues, a starting point is the observation that, in the presence of constraint \eqref{constraint},
the equations of MHD are not prescribed uniquely, but only up to the addition of multiples of the constraint.
Indeed, in the study of spectral stability, one could adjoin the constraint as an additional equation if desired.
Thus, one must take care to choose a form of the equations possessing properties under which the standard Evans function
and Lopatinsky determinants used to study viscous and inviscid stability are well defined, namely noncharacteristicity,
hyperbolicity, and conservative form.
There is a standard reformulation of the equations in which they become symmetric hyperbolic-parabolic and noncharacteristic
\cite{BT1,C,MeZ2} and another, different, formulation in which they become conservative; the standard approach has been to use
ad hoc combinations of these in the analysis, depending on the need at hand.
Here, we introduce for our single formulation a different analytical framework encompassing both viscous and inviscid cases.
This gives   necessity and sufficiency  of the Evans condition for viscous MHD
stability in the presence of constraint \eqref{constraint}, answering an open problem posed in \cite{MeZ2}, where sufficiency but not necessity was established. In passing, we rederive and further illuminate the inviscid results of \cite{BT1,FT}.
These issues are discussed in Section \ref{mathematical_settings}, where the $\beta$-model and basic
analytic framework are introduced.

\begin{figure}
        \centering
        \begin{subfigure}[b]{0.45\textwidth}
		 (a) \includegraphics[scale=0.4]{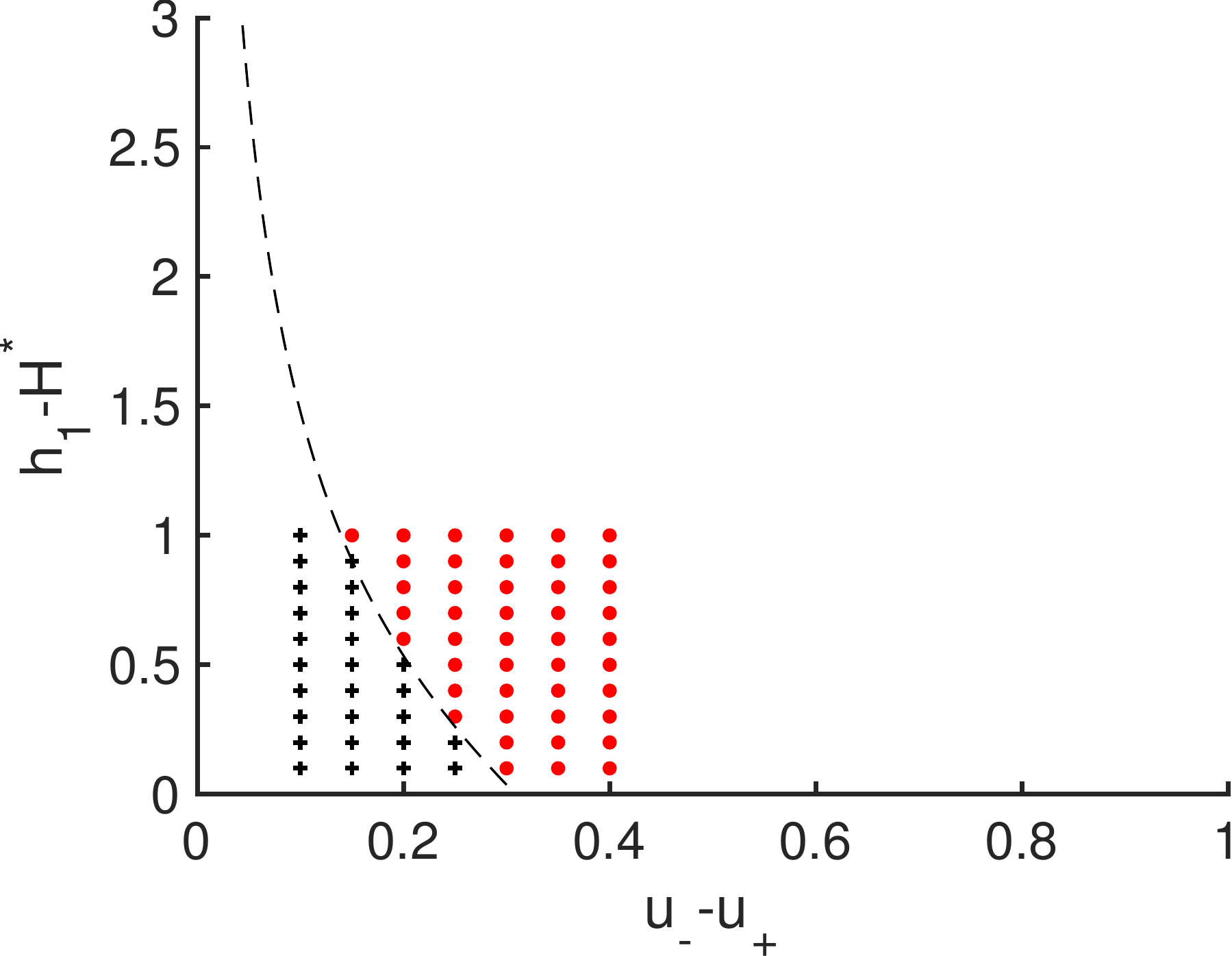}
		 \end{subfigure} 
		 \quad
		 \begin{subfigure}[b]{0.45\textwidth}
		(b) \includegraphics[scale=0.4]{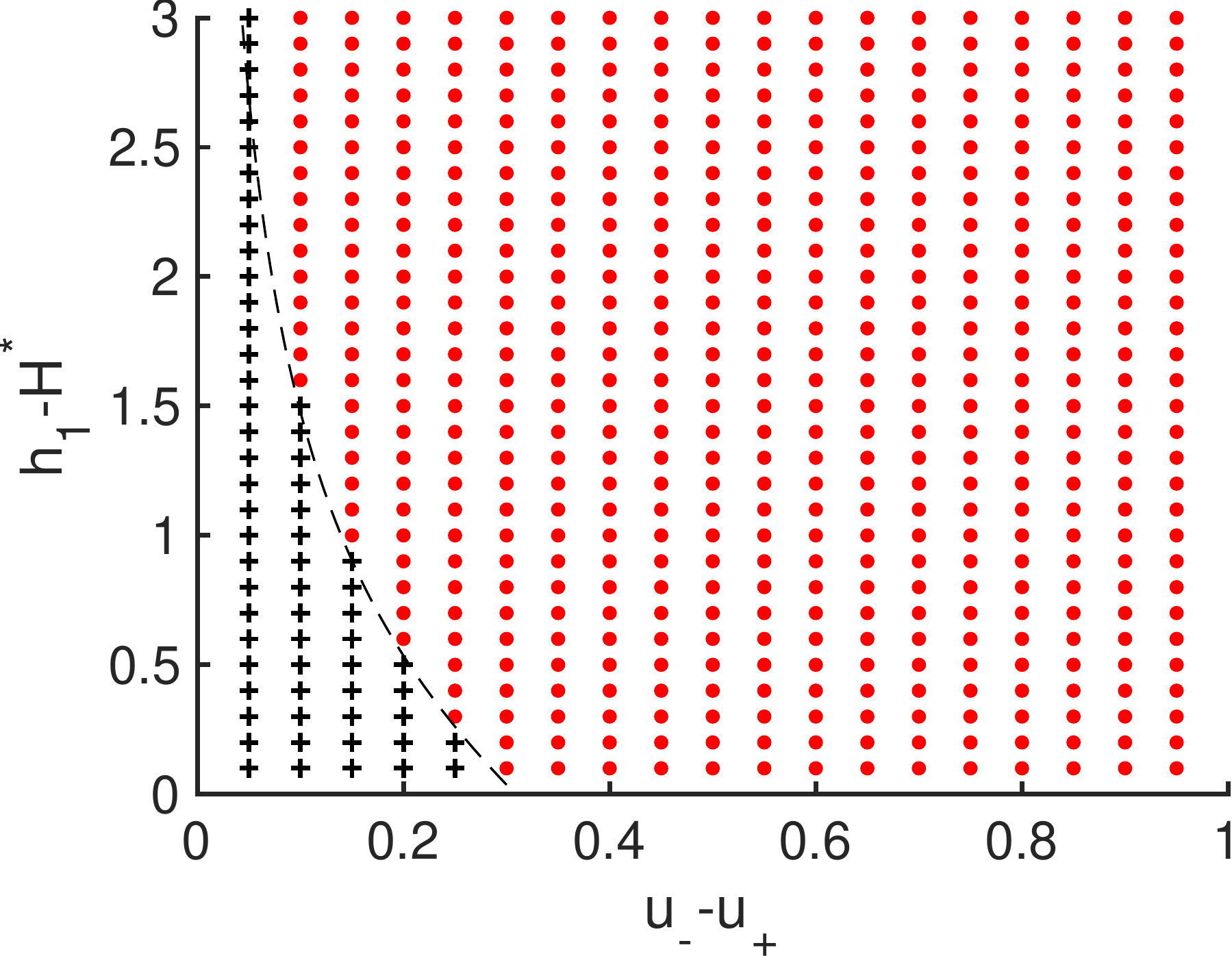}  
		 \end{subfigure}
		 	\caption{
			Bifurcation diagram plotting $h_1-H^*$ against $u_1^--u_1^+$ indicating the boundary between stable and unstable waves in the (a) viscous model and (b) the inviscid model when $\gamma = 5/3$. A red dot corresponds to instability while a black plus sign indicates stability. 
In the viscous case, to examine stability, we considered $\xi \in [0.001, 0.004, 0.007, 0.01, 0.04, 0.07, 0.1, 0.14, 0.17, 0.2]$. The dashed line in both figures indicates the critical transition parameter for the Lopatinsky determinant.		}
	\label{fig264}
\end{figure}

In Section \ref{Lopatinski_analysis}, we provide a description of the Rankine-Hugoniot conditions and the Lopatinsky determinant condition,
giving the foundations for  a careful study of inviscid instabilities. 
In particular, we (i) recapitulate in the more convenient $\beta$-model framework the large-magnetic field asymptotics of \cite{FT} showing instability, at the same time correcting certain computation errors 
in \cite{FT}; 
and (ii) carry out a numerical Lopatinsky study both verifying our 
asymptotics and extending the analysis to the small-magnetic field regime (see Figs. \ref{fig158} and \ref{fig264}(b), respectively).  
\begin{SCfigure}
  \centering
  \caption{Log-log plot of the relative error between the numerical and asymptotic descriptions of the zero of the Lopatinsky determinant against $\eps = \frac{1}{h_1}$ when $\gamma = 5/3$ and $u_1^+ = 0.6$.  }
\label{fig158}
\includegraphics[scale=0.5]{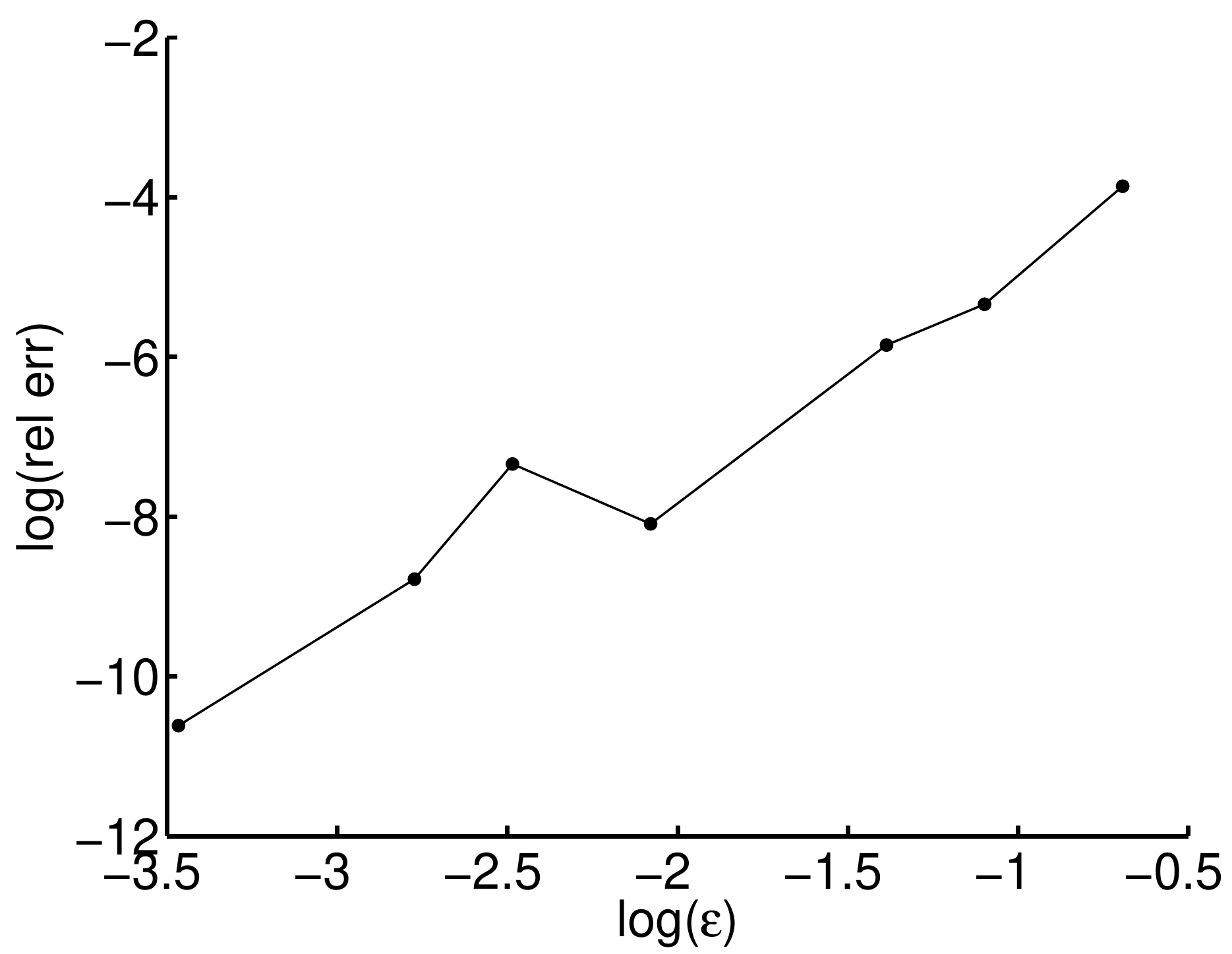}
\end{SCfigure}
Two notable conclusions are that:
 \begin{enumerate}[label=(\roman*), ref=\thetheorem(\roman*)]
\hangindent\leftmargin
	\item[(i)] {\it the large-magnetic field asymptotics
		are quite accurate}, extending even down to rather small magnetic field strengths (see Fig. \ref{fig157});
 \item[(ii)] {\it there do exist inviscid stable slow parallel
MHD shocks} for sufficiently small magnetic field and sufficiently small amplitude of the shock.
\end{enumerate}
The latter conclusion has the important implication that a stability transition, with potential for bifurcation, occurs. Finally, in Section \ref{comparison},
we compare our analytical and numerical multi-D results with those in \cite{FT} and \cite{SE}. 
Notably, we find that our large-magnetic field asymptotics improve by as much as $20\%$ 
on the accuracy of previous analyses. Specifically, as shown in 
Fig. \ref{fig157}, when compared
with results in \cite[ Equation (61)]{FT}, the calculation
\eqref{compare_with_FT} of  $\lambda_2$ shows a better agreement with numerical
results; see also Section \ref{Lopatinski_analysis}.
This discrepancy is partly explained by the computations of Appendix \ref{Rankine},
where, among other things, we show that the dynamic Rankine-Hugoniot
conditions of \cite[Equation (44)]{FT}  are incorrectly stated.
This error, acknowledged by the authors in \cite{Trak_pers}, leads to
quantitative but not qualitative changes in the asymptotic results
obtained in \cite{FT}. More important, the corrected version yields a result that is valid for
all $\gamma\geq   1$ for isentropic-law,  extending the result
of  \cite[\S 3.4]{FT} constrained to $\gamma \in [1,2]$.

\begin{SCfigure}
  \centering
  \caption{
  Plot of $\mathrm{Re}(u_1(x))$ in the first order approximation of  the perturbation of the nonplanar bifurcating wave of \eqref{eq1} as constructed by solving for the eigenvalue-eigenfunction pair using the Evans function and then imposing periodicity. The associated parameters are $\gamma = 5/3$, $u_1^+ = 0.6$, and $h_1 = 3$.
}
\label{fig104}
\includegraphics[scale=0.5]{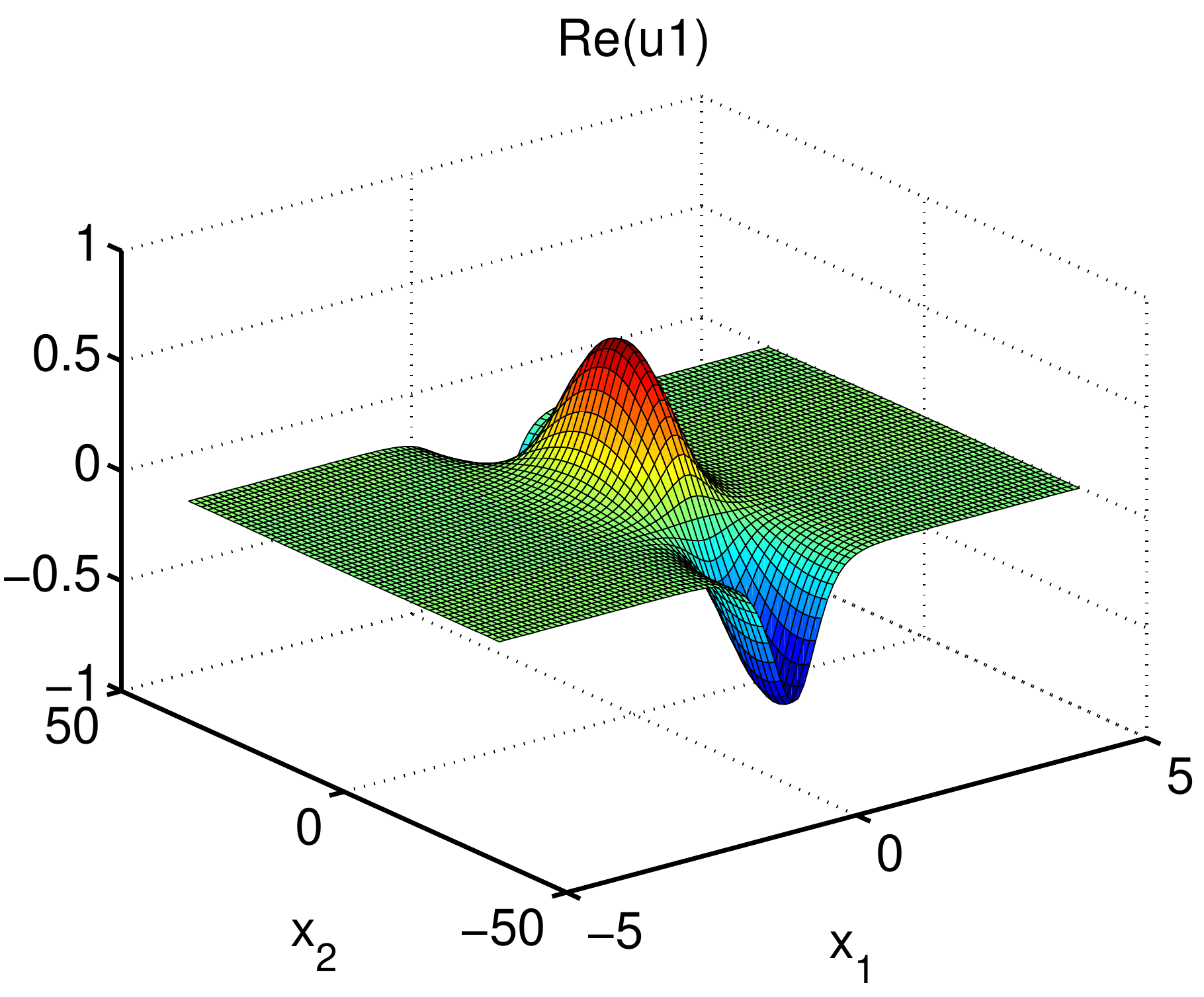}
  \end{SCfigure}

In Section \ref{viscous_evans_function}, we connect the inviscid analysis with the viscous theory of stability of 
planar shocks based on the work of \cite{ZS}, through a study of the low-frequency limit 
(Table \ref{viscous_table} and Fig. \ref{fig101});
this gives an additional check on our Lopatinsky computations through their asymptotic agreement with our numerical computations of the Evans function,
an object defined in a completely different way.

 \begin{SCfigure}
  \centering
  \caption{
  Plot of $\lambda(\xi)$ against $\xi$, where $D(\lambda(\xi),\xi) = 0$, when $\gamma = 5/3$, $u_1^+ =0.6$ and $h_1 = 3$.
  }
\includegraphics[scale=0.4]{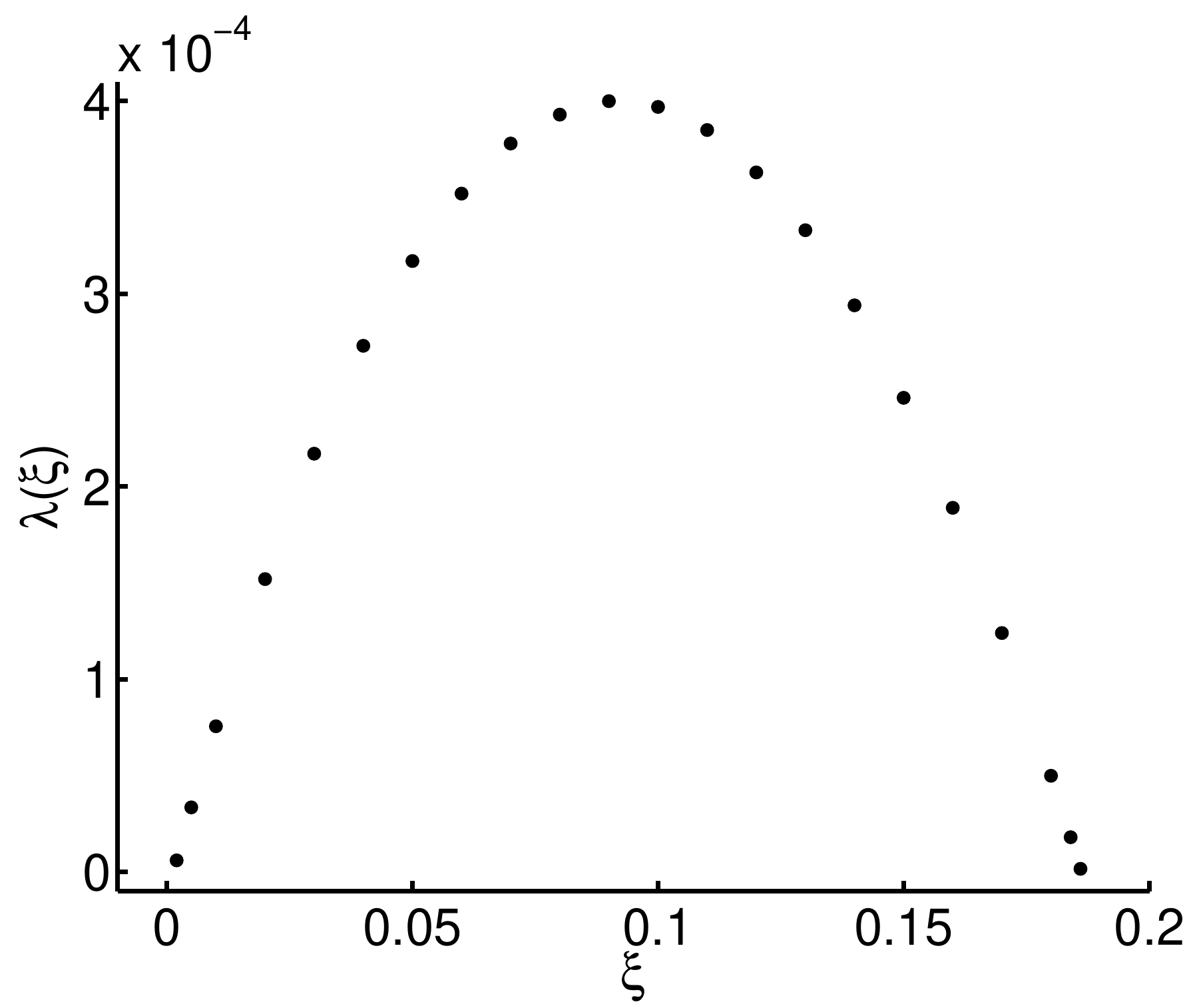}
\label{fig101}
\end{SCfigure}

We go on to carry out a complete numerical Evans function study of viscous stability over all parameters and frequencies (see Fig. \ref{fig264}(a)),
verifying that there occurs the same stability transition, at the same parameter values, that were seen in the inviscid case.
We present, further, numerical results verifying the bifurcation conditions assumed in the abstract results in \cite{TZ2} and \cite{Mo} (see also the related \cite{JYZ}), consisting of the absence of other neutrally stable eigenvalues 
(see Fig. \ref{fig258}) and nonzero speed of crossing of the imaginary axis as the magnetic field is varied (see Fig. \ref{figs237and238}).
We compute also approximate zero-eigenfunctions at the bifurcation point,
yielding the approximate shape of the bifurcating nonplanar wave (see Fig. \ref{fig104}).

We finalize the paper with an appendix: in \ref{winding_number_appendix} we briefly explain how numerical winding number computations were carried out; 
in \ref{appendix_FT_proof}, following \cite{BT1,FT}, 
we present an alternative proof by direct computation of Proposition 
\ref{persistence}
in the special case of inviscid MHD, 
showing that the constraint $\mathrm{div}(h)\Big|_{t=0} =0$ persists throughout the dynamics, namely, $\mathrm{div}\left(h(t)\right) \equiv 0$, for all $t\geq0$;  last,  in \ref{Rankine} we give another perspective on 
the \textit{dynamic Rankine-Hugoniot condition} of \cite{BT1,FT}, at the same time correcting details 
of some related calculations in 
\cite{FT}.
  \begin{SCfigure}
  \centering
  \caption{
  Viscous stability bifurcation diagram when $\gamma = 7/5$, with red open circles corresponding to instability and black plus signs indicating stability of the Fourier transformed operator at frequency $\xi$.
    }
	\label{fig258}
		\includegraphics[scale=0.5]{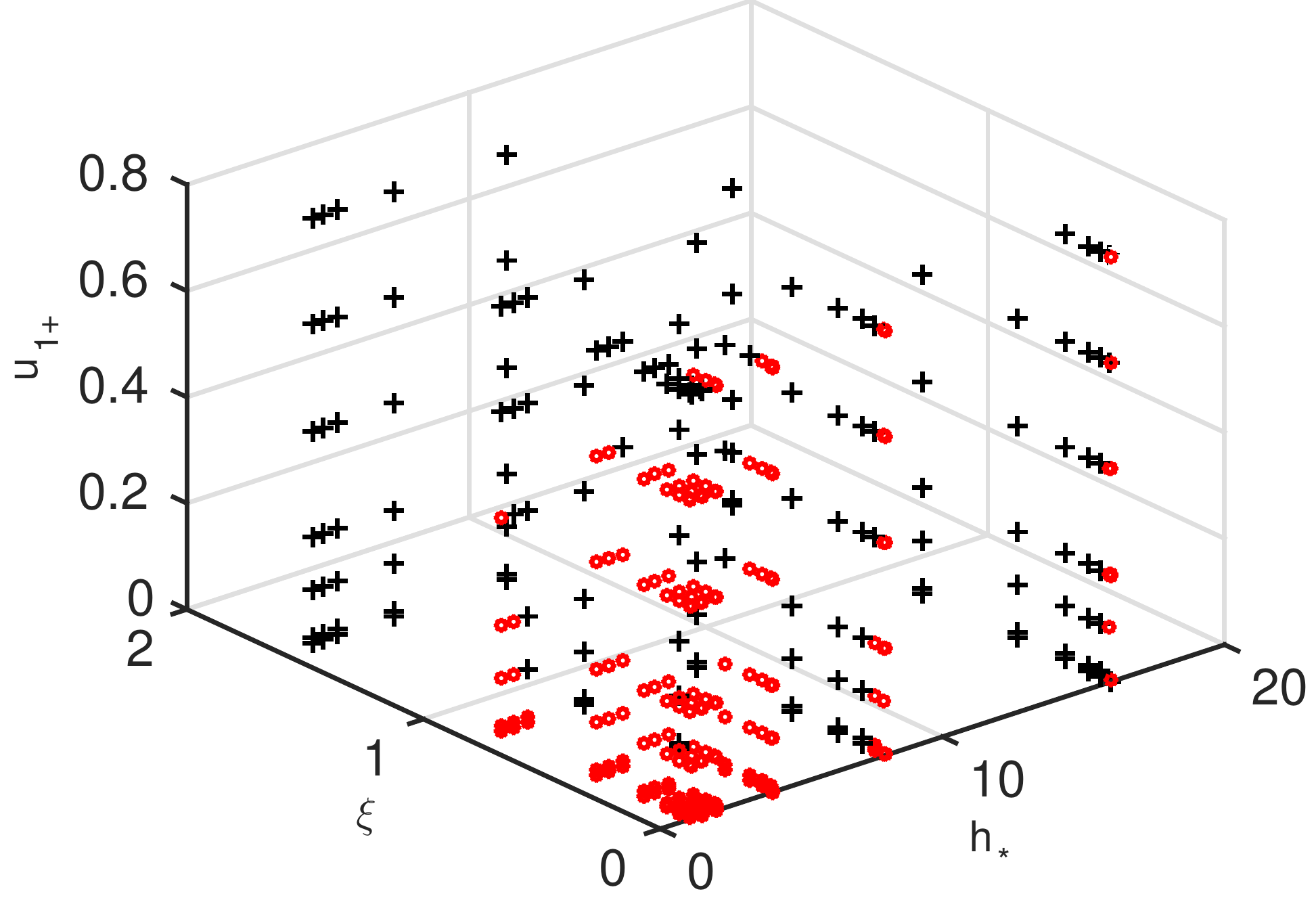}
  \end{SCfigure}

\subsection{Discussion and open problems}\label{discussion}
Our general results on Evans functions for systems with constraints pave the way for a unified treatment of multi-dimensional
viscous shock stability in MHD, elasticity, and related equations arising in continuum mechanics.
Our introduction of the $\beta$-model, though more special to MHD, by putting the equations into a standard symmetrizable conservative form, has the tremendous advantage that it allows computations using existing ``off-the-shelf'' code in the numerical stability package STABLAB \cite{STABLAB}.
As discussed in \cite{HLyZ2,BHLyZ1,BHLyZ2},  success or failure of multi-dimensional computations is highly dependent on the specific algorithm used, with a number of catastrophic possible pitfalls that must be avoided.
Thus, the ability to use existing, already-tested algorithms is of significant practical advantage.

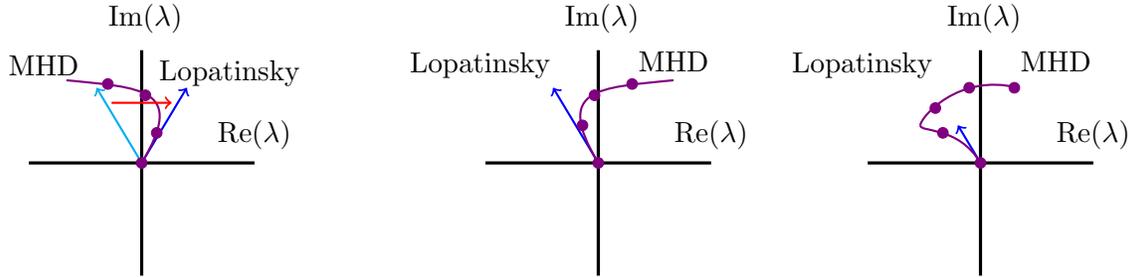
\begin{figure}[h]
        \centering
        \begin{subfigure}[b]{0.3\textwidth}

\begin{tikzpicture}
    \draw(-1.5,0)--(1.5,0) [very thick];
    \draw(0,-1.5)--(0,1.5) [very thick];
    \draw[->] (0,0)--(0.6,1) [blue,thick];
    \draw[->] (0,0)--(-0.6,1) [cyan,thick];
    \draw (0,0)..controls (0.6,1) and (0,1) ..(-1,1.1) [violet, thick];
    \draw[->] (-0.4,0.8)--(0.4,0.8) [red, thick];
    \filldraw [violet] (0,0) circle [radius = 2pt];
    \filldraw [violet] (0.2, 0.4) circle [radius = 2pt];
    \filldraw [violet] (0.05, 0.9) circle [radius = 2pt];
    \filldraw [violet] (-0.45, 1.05) circle [radius = 2pt];
    \draw(0.1,1.2) node[anchor = west] {Lopatinsky};
    \draw(-0.7,1.3) node[anchor = east] {MHD};
    \draw(1.5,0.05) node[anchor = south] {$\mathrm{Re}(\lambda)$};
    \draw(0.05,1.6) node[anchor = south] {$\mathrm{Im}(\lambda)$};
    \end{tikzpicture}
            \end{subfigure}%
        \quad
        \begin{subfigure}[b]{0.3\textwidth}
    \begin{tikzpicture}
    \draw(-1.5,0)--(1.5,0) [very thick];
    \draw(0,-1.5)--(0,1.5) [very thick];
    \draw[->] (0,0)--(-0.6,1) [blue,thick];
    \draw (0,0)..controls (-0.6,1) and (0,1) ..(1,1.1) [violet, thick];
    \filldraw [violet] (0,0) circle [radius = 2pt];
    \filldraw [violet] (-0.21, 0.5) circle [radius = 2pt];
    \filldraw [violet] (-0.05, 0.9) circle [radius = 2pt];
    \filldraw [violet] (0.45, 1.05) circle [radius = 2pt];
    \draw(0.4,1.35) node[anchor = west] {MHD};
    \draw(-0.5,1.3) node[anchor = east] {Lopatinsky};
        \draw(1.5,0.05) node[anchor = south] {$\mathrm{Re}(\lambda)$};
    \draw(0.05,1.6) node[anchor = south] {$\mathrm{Im}(\lambda)$};
    \end{tikzpicture}
        \end{subfigure}
        ~ 
        \begin{subfigure}[b]{0.3\textwidth}
    \begin{tikzpicture}
    \draw(-1.5,0)--(1.5,0) [very thick];
    \draw(0,-1.5)--(0,1.5) [very thick];
    \draw[->] (0,0)--(-0.3,0.5) [blue,thick];
    \draw (0,0)..controls (-0.3,0.5) and (-0.8,0.4).. (-0.8,0.5) [violet, thick];
    \draw (-0.8,0.5)..controls (-0.8,0.6) and (-0.3,1.2).. (0.5,1) [violet, thick];
    \filldraw [violet] (0,0) circle [radius = 2pt];
    \filldraw [violet] (-0.6, 0.73) circle [radius = 2pt];
    \filldraw [violet] (-0.15, 1) circle [radius = 2pt];
    \filldraw [violet] (0.45, 1) circle [radius = 2pt];
    \filldraw [violet] (-0.5,0.4) circle [radius = 2pt];
    \draw(0.4,1.35) node[anchor = west] {MHD};
    \draw(-0.5,1.3) node[anchor = east] {Lopatinsky};
    \draw(1.5,0.05) node[anchor = south] {$\mathrm{Re}(\lambda)$};
    \draw(0.05,1.6) node[anchor = south] {$\mathrm{Im}(\lambda)$};
    \end{tikzpicture}
        \end{subfigure}
        \caption{Diagram indicating the ways in which a subcritical and critical Hopf bifurcation might manifest itself in the bifurcation diagrams when taking into account the relationship between the Evans function and the Lopatinsky determinant.
        }\label{transitions}
\end{figure}

We point out some further background and implications from a more general perspective
contrasting viscous and inviscid stability.
As shown in \cite{ZS}, viscous stability is closely related at low frequencies to inviscid stability, hence uniform viscous stability implies uniform inviscid stability.
This means that, as shock or magnetic field strength is increased from a stable regime, a transition to inviscid instability implies a corresponding transition to viscous instability, occurring in low-frequency modes.
The reverse is not true, as it is possible that a transition to viscous instability could occur in advance of the transition to inviscid instability due to destabilization of an intermediate- or high-frequency mode unrelated to the inviscid problem. Indeed, let $\xi\in \R$ be the Fourier frequency parameter in the direction $x_2$ transversal to the shock front.
Then, as depicted in Fig. \ref{transitions}, there are essentially 3 different scenarios for viscous vs. inviscid stability transitions in a finite-cross section channel, 
depending mainly on concavity vs. convexity of the neutral spectral curve $\lambda = \lambda(\xi)$ for the viscous case,
bifurcating from $\lambda(0)=0$, given by the sign of $\lambda''(\cdot)$ near $\xi=0$.

Recall \cite{ZS}, that this curve is tangent at $(\xi, \lambda)=(0,0)$ to the corresponding inviscid stability curve, given by homogeneity by
a ray through the origin.
The viscous spectral curve $\lambda(\cdot)$ is depicted in various cases in Fig. \ref{transitions} along with its tangent inviscid ray.
In the first case, $\lambda(\cdot)$ is concave ($\lambda''(\cdot)<0$) and we see that the transition to instability occurs simultaneously in the whole space 
(for which $\xi\in \R$) for viscous and inviscid problems, and slightly later for a duct of finite cross-section (for which $\xi \in \ZZ$, hence
viscous spectra lags behind inviscid by a fixed finite amount).  In the second, $\lambda(\cdot)$ is convex ($\lambda''(\cdot)>0$) and viscous instability occurs slightly before inviscid instability as the bifurcation parameter is varied, for either the whole space or finite cross-section.
In the third, $\lambda(\cdot)$ is concave, but high-frequency instabilities cause the viscous problem to destabilize first.
This is consistent with the results of \cite{ZS} in the whole space, where it is shown that the viscous transition occurs not later than the inviscid one.  For the whole space problem ($\xi\in \R$), our discussion above shows that
simultaneously precisely in case 1, and strictly sooner in cases 2 and 3.

Our numerical results (see Section \ref{s:finding}) for typical parameters $\gamma = 5/3$, $u_1^+ = 0.86$
indicate that the viscous transition occurs at $H^*\approx 2$, while the
inviscid transition occurs at $H^* \approx 1.995$. 
Likewise, one can see from Fig. \ref{fig101} that the second derivative of the spectral parameter $\lambda = \lambda(\xi)$ with respect to $\xi$ is negative. 
Indeed, we find this to be the case for all parameter values; see Section \ref{s:concavity} and particularly
Fig. \ref{fig273}(a).
That is, we appear to be in the first case depicted in Fig. \ref{transitions}.
This has the important consequence that, considered in the whole space, {\it viscous and inviscid stability transitions coincide}
for the 2-D transverse instabilities considered here, similarly as was seen in \cite{BFZ} for the 1-D longitudinal instabilities considered there.
Thus, though viscosity can in principle according to the results of \cite{ZS} hasten the onset of instability, for the 
two studies carried out so far (in \cite{BFZ} and here) for gas dynamics and MHD, this possibility 
was not in practice observed.
It remains a very interesting open problem whether such ``viscosity-enhanced instability'' can occur for physically relevant models of gas dynamics or MHD, both philosophically, and- since inviscid transitions may often be computed 
explicitly, whereas viscous transitions require substantial numerical computation- from a practical point of view.
Indeed, we note that it has been shown in \cite{FrKSch} that the neutral inviscid stability curve, where the Lopatinsky condition precisely vanishes 
for $\lambda=0$, $\xi\neq 0$, {\it may be determined explicitly,} making this a practical condition indeed.

\begin{figure}
        \centering
        \begin{subfigure}[b]{0.45\textwidth}
		 (a) \includegraphics[scale=0.4]{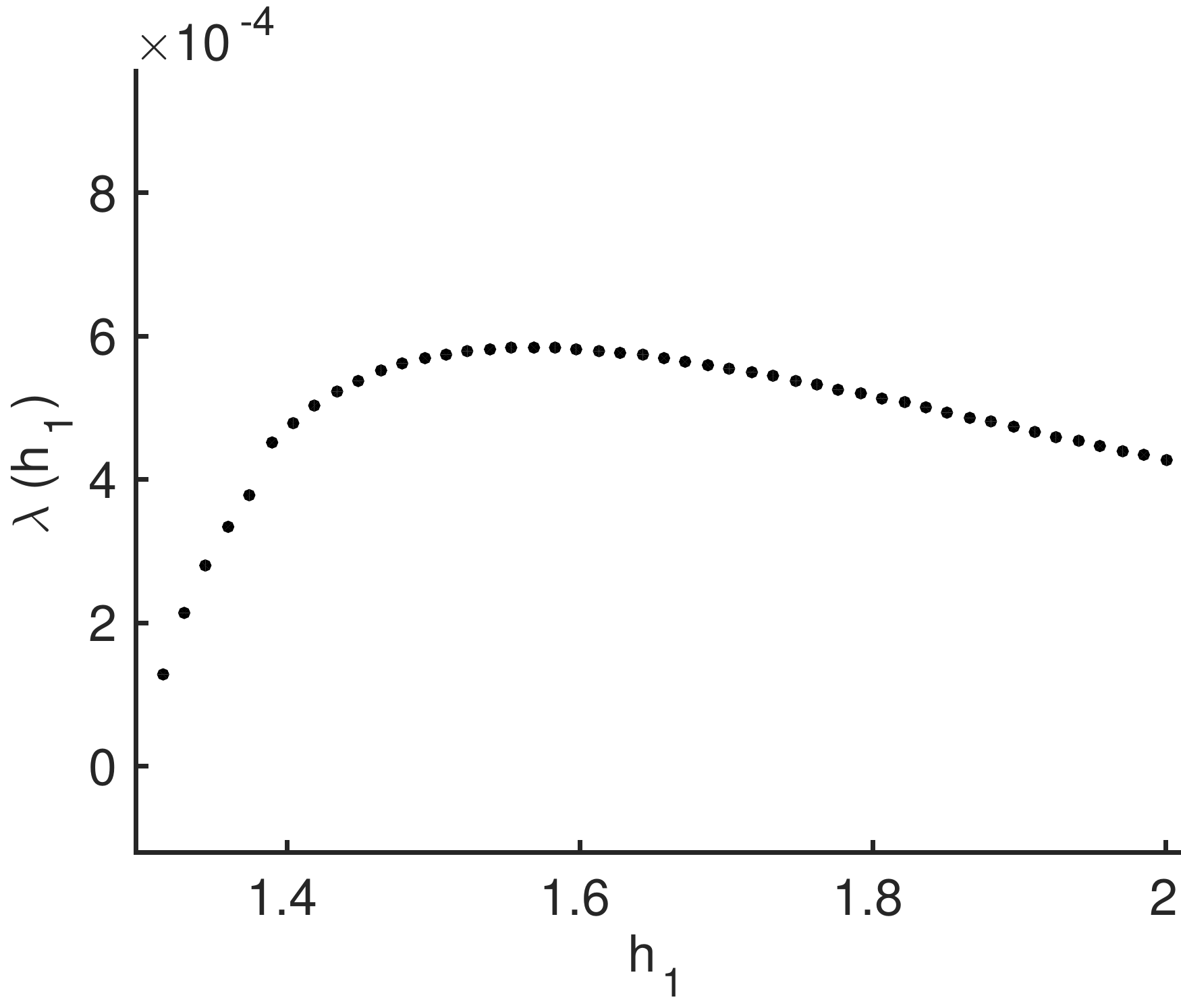}
		 \end{subfigure} 
		 \quad
		 \begin{subfigure}[b]{0.45\textwidth}
		(b) \includegraphics[scale=0.4]{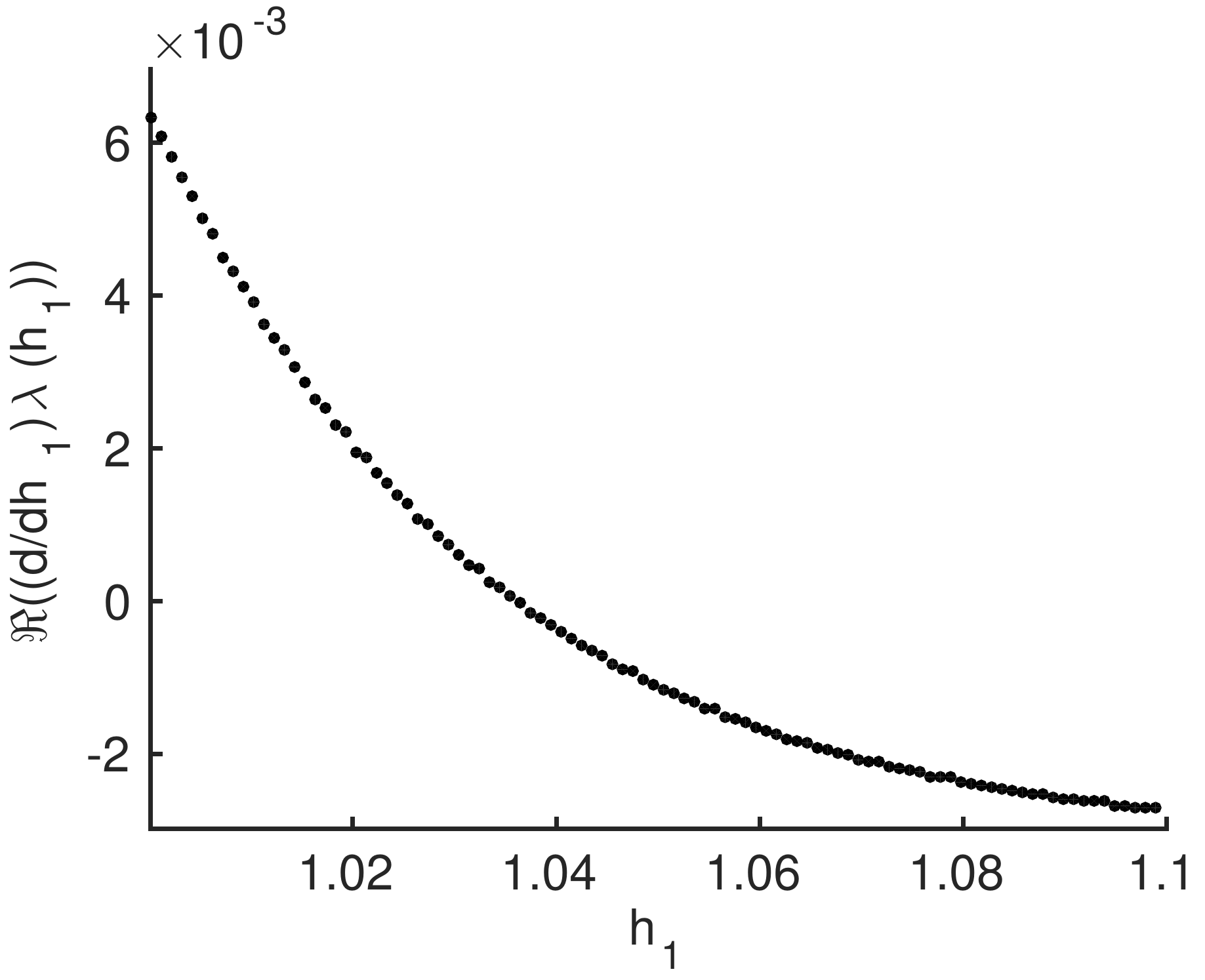}  
		 \end{subfigure}
		 	\caption{Plot of (a) $\lambda(h_1)$ against $h_1$ and (b) $(d/dh_1)\lambda(h_1)$ against $h_1$ when $\gamma = 5/3$ and $u_1^+ = 0.6$. For computational details, see Section \ref{additional_numerics}.
			}
			\label{figs237and238}
		\end{figure}

Our numerical study confirms not only stability transition but also that the spectral bifurcation assumptions assumed in \cite{Mo, TZ1,TZ2} indeed hold:  the loss of stability happens through a double eigenvalue crossing the imaginary axis.
For fully parabolic ``artificial viscosity'' versions of MHD, the actual nonlinear bifurcation would follow from the results presented in \cite{Mo,TZ1,TZ2} restricted to the space of divergence free magnetic field functions, provided that the corresponding
spectral scenario were verified.
Extending this result to the physical, ``real'' viscosity case considered here is an important open problem.
Presumably, one could expect a similar spectral bifurcation for the artificial viscosity case (giving the full nonlinear result), but 
we do not investigate this here.
The extension of our investigations to a complete ``all-parameters'' study of MHD shocks analogous to that done for gas dynamics 
in \cite{HLyZ2} is another important direction for further study:
likewise, spectral stability of small amplitude nonextreme shocks as considered here, 
both inviscid and viscous.\footnote{
For extreme shocks, i.e., $1$- or $5$-shocks in the artificial viscosity case, see \cite{FS1,FS2}.}

It is worthwhile to emphasize the loss of planar structure observed here (see Section \ref{viscous_evans_function}), in the numerical experiments of Edelman and Stone (cf. \cite[Section 3.1 and 3.3]{SE}) and in the context of steady bifurcations in a $\mathcal{O}$(2)-symmetric strictly parabolic system of conservation laws; see \cite{Mo}.
This indicates that the resolution of the 2-D-Riemann problem for slow shocks in MHD is not realized through planar shocks, but
generically involve nonplanar ``corrugated'' fronts as component slow waves; see also the recent numerical results of \cite{Scotternst2011nonlinear}.
Spectral and nonlinear stability of these nonplanar waves is another very interesting open problem for further investigation.

Finally, we mention an interesting related analysis of Freist\"uhler, Kleber, and Schropp
\cite{FrKSch} for the inviscid isothermal ($\gamma=1$) case, in which they find by explicit computation the inviscid stability
boundary $\Delta (0, \pm 1)=0$ (in the notation of Section \ref{sec:lop_det_constr}) for slow parallel shocks; a similar computation should be possible for general $\gamma$,
sharpening our description of the inviscid boundary in Fig. \ref{fig264}-(b).\footnote{Namely, the dashed line in the figure,  here computed  numerically as described in Section \ref{sec:destabilization}. }
Freist\"uhler et al investigate numerically also the fast shock case, showing that parallel isothermal fast shocks are uniformly stable, but nonparallel ones experience a transition to instability across a particular parameter surface.
Similarly, for the general isentropic case ($\gamma>1$), Trakhinin \cite{T} has shown that
fast nonparallel shocks can be unstable in some regimes.
A very interesting further study would be to carry out the corresponding analysis of viscous stability transition in these cases for fast shocks as we have done here for slow shocks.

\subsection{Notation}\label{Notation:section} In this paper  we denote the real part (resp., imaginary part)  of a number $z\in \mathbb{C}$ by $\mathrm{Re}(z)$ (resp., $\mathrm{Im}(z)$). We let $u \in \R^n$ be a vector of states assuming values $u^{\pm}$ accross a shock. Given any function $u \mapsto f(u)$ we write $[f(u)] = f(u^+) - f(u^-)$. The partial derivative of a differentiable mapping $(x_1,x_2, \ldots, x_n)\mapsto g(x_1,x_2, \ldots, x_n)$ with respect to the variable $x_i$ is written as $g_{x_i}.$
Given a matrix $A \in \mathbb{C}^{n\times n}$, we write $A^{-1}$ to denote its inverse, and $\sigma\left(A\right)$ to represent its eigenvalues. 

\subsection*{Acknowledgments} R.M. would like to thank
A. Mailybaev, D. Marchesin and C. Rohde for interesting conversations.
K.Z. thanks David Lannes, Jinghua Yao, and Alin Pogan for interesting conversations regarding constraints.
Numerical computations in this paper were made with the use of STABLAB, a MATLAB-based stability platform developed by B.B and K.Z. together with Jeffrey Humpherys and Joshua Lytle; SYMPY; and MATHEMATICA.

\section{The $\beta$-model} \label{mathematical_settings}
Focusing our attention without loss of generality (by Galillean invariance of \eqref{eq1})
on shocks propagating in the $x_1$ direction,
we now introduce the $\beta$-model, obtained by adding  a constant
multiple of $\mathrm{div}(h)$ to the  equation \eqref{eq1-c} in \eqref{eq1}, yielding in place of equations \eqref{eq1} the system
\begin{subequations}\label{eq-2}
\begin{align}
\rho_t + \mathrm{div}(\rho u) &=0,\\
(\rho u)_t + \mathrm{div}(\rho u \otimes u - h\otimes h)+\nabla q &= 
	\mu \Delta u + (\eta + \mu) \nabla \mathrm{div} (u), \label{eq2-b}\\
h_t - \nabla \times (u\times h) +\beta \mathrm{div}(h)e_1&= 
	\nu \Delta h ,\label{eq2-c} 
\end{align}
\end{subequations}
where $\beta$ is a fixed real-valued parameter and $e_1 = (1,0)^{t}$. 
Recalling the constraint $\mathrm{div}(h) =0$ we see that \eqref{eq1} and \eqref{eq-2} are equivalent
for smooth solutions. However, \eqref{eq-2} has certain practical advantages, as we now explain. 

An important reason to modify \eqref{eq1} is that, considered without the constraint $\mathrm{div}(h) =0$, 
the inviscid version $\mu= \eta=\nu=0$, of \eqref{eq1} is not hyperbolic. 
A standard resolution of this problem is to substitute in \eqref{eq1} relations
$ \nabla(|h|^2/2) - \mathrm{div}( h\otimes h) = h\times (\nabla \times  h) - h \mathrm{div}( h) = 
h\times(\nabla \times h) $
and
\begin{align*}
 \nabla \times(h\times u) &= \mathrm{div}(u) h + (u \cdot \nabla)h  -( \mathrm{div}(h))u - (h\cdot \nabla)u
 = \mathrm{div}(u) h + (u \cdot \nabla)h  - (h\cdot \nabla)u,
\end{align*}
which amounts to adding the nonconstant multiples 
$h \mathrm{div}( h)$ and $u \, \mathrm{div}(h)$ of the constraint $\mathrm{div}(h)$ 
to \eqref{eq1-b} and \eqref{eq1-c},
to obtain the symmetrizable hyperbolic system \cite{FT,BT1,MeZ2}
\be \label{MeZ2sys}
\rho_t + \mathrm{div}(\rho u) = 0,\quad
	(\rho u)_t + \mathrm{div}(\rho u \otimes u) + h\times (\nabla \times  h) +\nabla p = 0 ,\quad
h_t - \nabla \times(u\times h)= 0.
\ee
This has the advantage of both recovering  hyperbolicity, and  providing a  symmetrizable 
system of equations.
On the other hand, this handy device results in loss of conservative form of the equations, 
as a result of which jump conditions across shocks are not computable for this version of the equations. 
Meanwhile, the jump conditions for the conservative version \eqref{eq1}
are degenerate (not full rank), and must be supplemented with that of the constraint in order to 
obtain the correct number of boundary conditions for the shock problem.
See \cite[\S 7]{MeZ2} for further discussion.
We refer elsewhere to this model, used in \cite{BT2,MeZ2,FT}, as the ``hybrid'' or ``standard'' model.

It is clear that the inviscid
$\beta$-model preserves the conservative structure of the equations; however, it is not symmetrizable. 
Nonetheless, as we will see shortly, it maintains (weak) hyperbolicity, among
other useful properties, in particular those needed for linearized stability analysis by the study
of Majda's Lopatinsky determinant \cite{Majda,ZS,MeZ2}.
Moreover, it may be used ``as is'' for both interior equations and jump conditions, without additional 
modifications, allowing the use of standard numerical schemes for investigation of Lopatinsky stability.
Likewise, the viscous version of the $\beta$-model is conservative and, 
though not of classical symmetrizable ``Kawashima''
form \cite{Kaw}, exhibits the same favorable type of dispersion relation enjoyed by models
of that type, among other properties needed for linearized stability analysis by the study of
the Evans function \cite{AGJ,GZ,Z1,Z2,Z3,ZS,MeZ2}.

\subsection{The inviscid $\beta$-model and its properties}\label{s:gensys}
To explore the properties of the $\beta$-model we find it convenient to work in an abstract setting, following the discussion in \cite[\S 5.4]{Da}, afterwards specializing to our model.
We start with the more complicated inviscid case, 
then finish by indicating briefly
the treatment of the viscous case following \cite[pp. 2 and 62-64]{JYZ}.

System \eqref{eq-2}, in the inviscid case $\mu=\eta=\nu$, may be written in the general form 
 \begin{align}\label{eq-2:abstract_form}
f_0(\mathcal{W})_t + \sum_{j=1}^{d} f_j(\mathcal{W})_{x_j} = 0,
	 \quad \mathcal{W}\in \R^n,
 \end{align}
with $d=2$, $n=5$, $\mathcal{W}= (\rho, u_1,u_2,h_1,h_2)^T$, and
\ba\label{f's}
f_0&=(\rho,\rho u_1, \rho u_2, h_1, h_2)^T,\\
f_1&=(\rho u_1,\rho u_1^2 + \frac12(h_2^2 - h_1^2) + p, \rho u_1 u_2- h_1h_2 , \beta h_1, -h_1u_2+h_2u_1)^T, \\
f_2&=(\rho u_2,\rho u_1u_2- h_1h_2, \rho u_2^2+ \frac12(h_1^2 - h_2^2) + p, \beta h_2 + h_1u_2 - h_2 u_1, 0)^T.
\ea
The linearization of \eqref{eq-2:abstract_form} about a constant solution 
$\mathcal{W}\equiv \overline{\mathcal{\bar W}}$ is given by 
\begin{equation}\label{eq_linearized}
	A_0 \mathcal{W}_t +\sum_{j=1}^d A_j \mathcal{W}_{x_j}=0,
	\qquad
	A_j :=  Df_j(\overline{\mathcal{W}}).
\end{equation}

Taking $\mathcal{V} = f_0\left(\mathcal{W}\right)$, where $f_0(\cdot)$ is assumed to be a (local) diffeomorphism from $\mathbb{R}^n$ to itself, we can rewrite \eqref{eq-2:abstract_form} in the standard form 
\be\label{eqs}
\mathcal{V}_t + \mathcal{F}(\mathcal{V})=0,
\ee
\be\label{Fs}
\mathcal{F}(\mathcal{V}):= \sum_{j=1}^d \mathcal{F}_j(\mathcal{V})_{x_j}
\qquad \mathcal{F}_j(\mathcal{V}):= f_j(\mathcal{V}(\mathcal{W}))=f_j(f_0^{-1}(\mathcal{W})),
\ee
with linearization about a constant solution $\mathcal{V}\equiv \overline{\mathcal{V}}=f_0(\overline{\mathcal{W}})$ 
given by
\begin{equation}\label{Veq_linearized}
	\mathcal{V}_t +\sum_{j=1}^d \mathcal{A}_j \mathcal{V}_{x_j}=0,
	\qquad
	\mathcal{A}_j := D\mathcal{F}_j(\overline{\mathcal{V}})=
	A_jA_0^{-1}(\overline{\mathcal{W}})).
\end{equation}

Likewise, the divergence-free constraint may be written in the general form
\be\label{Gamma}
\Gamma \mathcal{V}:=\sum_{j=1}^d \Gamma_j \partial_{x_j}\mathcal{V}\equiv 0,
\quad \Gamma_j\in \R^{m\times n}\equiv \const,
\ee
satisfying Dafermos' compatibility condition (involution) (cf. \cite[Chap. V, Section 5.4]{Dafermos-involution}):
\be\label{inv}
\Gamma \mathcal{F}(\mathcal{V})= - \mathcal{M}\Gamma \mathcal{V},
\ee
where $\mathcal{M}$ is first-order constant-coefficient: 
\be\label{Meq}
\mathcal{M}:=\sum_{j=1}^{d} \mathcal{M}_j \partial_{x_j},
\quad \mathcal{M}_j\in \R^{m\times m}\equiv \const.
\ee
and hyperbolic:
\be\label{Mhyp}
\sigma\left( \sum_{j=1}^d k_j \mathcal{M}_j\right) \; \hbox{\rm real and semisimple}, \quad \forall k \in \R^d.
\ee
Note that \eqref{inv} implies also the linearized version:
\be\label{Linv}
\Gamma \mathscr{L}=- \mathcal{M}\Gamma,
\ee
where $\mathscr{L}:=-\sum_{j=1}^d \mathcal{A}_j^{\pm}  \partial_{x_j}$, $x_1\gtrless 0$, is the linearized operator 
about a background constant solution.

\begin{remark}\label{rmkvalues}
	For 
	\eqref{eq-2} we have $d=2$, $m=1$, $n=5$, 
	$\mathcal{V}= \left(\rho, \rho u_1, \rho u_2, h_1, h_2\right)$, 
	$\Gamma_1= (0,0,0,1,0)$,  $\Gamma_2=(0,0,0,0,1)$, $\mathcal{M}_1=\beta$, and $M_2=0$. 
	Thus,
	$\Gamma \mathcal{V} = \mathrm{div}(h) $
	and $\mathcal{M}= \beta \partial_{x_1}$.
\end{remark}

\subsubsection{Hyperbolicity}\label{s:hcons1}
For a given reference state $\overline{\mathcal{V}}$ and direction vector $k\in \R^d$, define 
$$
\mathcal{A}(k)= \sum_{j= 1}^d k_j \mathcal{A}_j^{\pm},\quad \Gamma(k)= \sum_{j= 1}^d k_j \Gamma_j,
\quad 
\mathcal{M}(k)= \sum_{j= 1}^d k_j \mathcal{M}_j,
$$
From \eqref{Linv}, we have
\be\label{syminv}
\Gamma(k) A(k)= \mathcal{M}(k)\Gamma(k),
\ee
whence $\ker \Gamma(k)$ is a right invariant subspace and $\Range  \, \Gamma(k)^T$ a left invariant subspace
of $A(k)$.
Likewise, $\coker \Gamma(k)$ is a left invariant subspace and $\Range  \, \Gamma(k)$ a right 
invariant subspace of $\mathcal{M}(k)$.

Hyperbolicity of \eqref{eqs} with constraint \eqref{Gamma} is defined as the
property that the eigenvalues $\sigma(A(k)|_{\ker \Gamma(k)})$ of 
$A(k)|_{\ker \Gamma(k)}$ be real and semisimple for all $k\in \R^d$,
and ``weak hyperbolicity'' of \eqref{eqs} with \eqref{Gamma} as the
property that $\sigma(A(k)|_{\ker \Gamma(k)})$ be real for all $k\in \R^d$.
Hyperbolicity of \eqref{eqs} without constraint \eqref{Gamma} is defined as the
property that $\sigma(A(k))$ be real and semisimple for all $k\in \R^d$,
and ``weak hyperbolicity'' without \eqref{Gamma} as the
property that $\sigma(A(k))$ be real for all $k\in \R^d$.

\begin{proposition}\label{hypprop}

Under the above assumptions, it holds that
 \begin{enumerate}[label=(\roman*), ref=\thetheorem(\roman*)]
\hangindent\leftmargin
 \item System \eqref{eqs} 	is weakly hyperbolic with constraint \eqref{Gamma}
	if and only if it is weakly hyperbolic without the constraint.
	In particular, $\mu$, $r$ are an eigenvalue, eigenvector pair of $\mathscr{L}(k)$ if and only if either
	(a) $\Gamma(k) r=0$, or (b) $\gamma:=\Gamma(k) r$ is an eigenvector of $\mathcal{M}(k)$ with eigenvalue $\mu$,
	$0\neq k\in \R^d$, whence
	\be\label{specunion}
\sigma(A(k)) =\sigma(A(k)|{\ker \Gamma(k)})\cup \sigma(\mathcal{M}(k)|_{\Range  \, \Gamma(k)}).
\ee
\item The equation \eqref{eqs} is hyperbolic with the constraint if it is hyperbolic without the constraint.
	If \eqref{eqs} is hyperbolic with the constraint, then for every $k$ for which
	$\sigma(\mathcal{M}(k)|_{\Range  \, \Gamma(k)})\cap \sigma(A(k)|_{\ker \Gamma(k)})=\emptyset$, 
	$\sigma (A(k))$ is imaginary and semisimple.
	\item  If \eqref{eqs} is hyperbolic with the constraint, and $\sigma (A(k))$ is semisimple
	for all $k\in \R^d$, then \eqref{eqs} is hyperbolic without the constraint.
	However, in general hyperbolicity with constraint \eqref{Gamma} does not imply hyperbolicity 
	without the constraint.
\end{enumerate}
\end{proposition}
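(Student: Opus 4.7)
The plan is to extract all three claims from the single commutation identity \eqref{syminv}, $\Gamma(k)A(k)=\mathcal{M}(k)\Gamma(k)$. From it I would read off two structural facts: $\ker\Gamma(k)$ is $A(k)$-invariant, since $r\in\ker\Gamma(k)$ forces $\Gamma(k)A(k)r=\mathcal{M}(k)\Gamma(k)r=0$; and $\Range\Gamma(k)$ is $\mathcal{M}(k)$-invariant, since $\mathcal{M}(k)\Gamma(k)v=\Gamma(k)A(k)v\in\Range\Gamma(k)$. The assignment $\bar r\mapsto\Gamma(k)r$ then provides a linear isomorphism $\mathbb{R}^n/\ker\Gamma(k)\to\Range\Gamma(k)$ that intertwines the induced quotient action of $A(k)$ with $\mathcal{M}(k)|_{\Range\Gamma(k)}$. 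Choosing any complement to $\ker\Gamma(k)$ therefore puts $A(k)$ into block upper-triangular form with diagonal blocks $A(k)|_{\ker\Gamma(k)}$ and a copy of $\mathcal{M}(k)|_{\Range\Gamma(k)}$; the characteristic polynomials factor accordingly, and \eqref{specunion} follows at once.

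For part (i), if $A(k)r=\mu r$, applying $\Gamma(k)$ yields $\mathcal{M}(k)(\Gamma(k)r)=\mu(\Gamma(k)r)$, so either $\Gamma(k)r=0$ (case (a)) or $\Gamma(k)r\in\Range\Gamma(k)$ is an eigenvector of $\mathcal{M}(k)|_{\Range\Gamma(k)}$ with eigenvalue $\mu$ (case (b)). Since $\sigma(\mathcal{M}(k)|_{\Range\Gamma(k)})\subset\sigma(\mathcal{M}(k))$ is automatically real by hypothesis \eqref{Mhyp}, the union \eqref{specunion} reduces reality of $\sigma(A(k))$ to reality of $\sigma(A(k)|_{\ker\Gamma(k)})$, which is precisely the equivalence of weak hyperbolicities with and without the constraint.

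For part (ii), hyperbolicity of $A(k)$ implies hyperbolicity of $A(k)|_{\ker\Gamma(k)}$, since a restriction of a diagonalizable operator to an invariant subspace remains diagonalizable. For the partial converse, under the spectral disjointness hypothesis a Sylvester equation determines a unique off-diagonal transform conjugating the block upper-triangular form of $A(k)$ to block-diagonal form; then \eqref{specunion} and \eqref{Mhyp} give reality of $\sigma(A(k))$, while semisimplicity of each diagonal block passes to $A(k)$. For part (iii), the stated implication follows at once from \eqref{specunion}, \eqref{Mhyp}, and the additional semisimplicity hypothesis. For the counterexample in the reverse direction, I would take $n=2$, $m=1$, $\Gamma=(0,1)$, $\mathcal{M}=0$, and $A(k)=\left(\begin{smallmatrix}0&1\\0&0\end{smallmatrix}\right)$: the identity \eqref{syminv} holds, $A(k)|_{\ker\Gamma(k)}=0$ is trivially hyperbolic, yet $A(k)$ itself is a non-semisimple nilpotent Jordan block.

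The main difficulty is confined to the partial converse in (ii): block upper-triangularity yields \eqref{specunion} unconditionally, but only the disjointness of $\sigma(A(k)|_{\ker\Gamma(k)})$ from $\sigma(\mathcal{M}(k)|_{\Range\Gamma(k)})$ allows one to pass from block triangular to block diagonal form via a Sylvester--Rosenblum type argument. The failure of this decomposition in the coincident-spectrum case is exactly what the counterexample in (iii) illustrates, so the real work is conceptual bookkeeping around the case distinction rather than any serious technical obstacle.
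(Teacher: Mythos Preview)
Your proof is correct and follows essentially the same route as the paper's: both extract everything from the intertwining relation $\Gamma(k)A(k)=\mathcal{M}(k)\Gamma(k)$, use invariance of $\ker\Gamma(k)$ to obtain the spectral decomposition \eqref{specunion}, invoke spectral disjointness to pass from triangular to diagonal structure in (ii), and give the identical $2\times 2$ nilpotent counterexample for (iii). Your packaging via an explicit block upper-triangular form and the Sylvester equation is slightly more systematic than the paper's hands-on construction of the correcting eigenvector $S+R$, but the underlying argument is the same.
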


\begin{proof}

	Applying \eqref{syminv} to $(A(k)-\mu)r=0$, we obtain $(\mathcal{M}(k)-\mu) \gamma=0$, with $\gamma=\Gamma(k)r$, and thus the second assertion in (i), dichotomy (a)-(b).
	In case (a), $r\in \ker \Gamma(k)$ and so $\mu\in \sigma(A(k)|_{\ker \Gamma(k)})$; in 
	case (b), $\mu\in \sigma(\mathcal{M}(k)|_{\Range  \, \Gamma(k)})$. Combining, we obtain \eqref{specunion}.
	The first assertion in (i) then follows by hyperbolicity of $\mathcal{M}$, which implies 
	$\sigma(\mathcal{M}(k))$ real, hence
	$\sigma(\mathcal{M}(k)_S)$ real on any invariant subspace of $\mathcal{M}(k)$.

	The first assertion in (ii) follows by the fact that the eigenvalues and eigenvectors
	of $A(k)$ restricted to an invariant subset are a subset of the 
	eigenvalues and eigenvectors of $A(k)$. The second assertion in (ii) holds because
	$\sigma(\mathcal{M}(k)|_{\Range  \, \Gamma(k)})\cap \sigma(A(k)|_{\ker \Gamma(k)})=\emptyset$
	implies that the number of eigenvectors and eigenvalues of
	$\mathcal{M}(k)|_{\Range  \, \Gamma(k)}$ and $A(k)|_{\ker \Gamma(k)}$ is then $n$, together
	accounting for all eigenvalues of $A(k)$, along with the fact that $(M-\mu)\Gamma S=0$ implies
	$\Gamma (A-\mu) S=0$ and so $(A-\mu)S\in \ker \Gamma$- this, along with separation of $\sigma( M(k)|_{\Range \, \Gamma}) $ and $\sigma (A(k)|_{\ker \Gamma})$ gives $(A-\mu)(S+R)=0$ for some $R\in \ker \Gamma$, yielding 
	semisimplicity of eigenvalues lying in $\sigma(M(k)|_{\Range \, \Gamma})$.
	
	The first assertion in (iii) follows by the observation that weak hyperbolicity plus semisimplicity of
	$\sigma(A(k))$ for all $k\in \R^d$ implies hyperbolicity.
	Finally, the example $\mathcal{A_1}=\bp 0 & 1\\ 0 & 0\ep$, $\Gamma_1=(0,1)$, $\ker \Gamma_1=(1,0)^T$,
	$\mathcal{M}_1=0$ for $d=1$, $n=2$, and $m=1$ satisfies hyperbolicity with constraint (since
	$A|_{\ker \Gamma_1}=0$) and hyperbolicity of $\mathcal{M}$, but is not hyperbolic without 
	constraint, demonstrating the second assertion in (iii).
\end{proof}

\br\label{jordanrmk}
Assuming hyperbolicity of $\mathcal{A}$ with constraint $\Gamma$ and hyperbolicity of $\mathcal{M}$,
consider the generalized eigenvector equation $(\mathcal{A}(k)-\mu)^i r=0$.
Applying $\Gamma(k)$ on the left, we obtain
$(\mathcal{M}(k)-\mu)^i \Gamma(k)r=0$, giving 
$(\mathcal{M}(k)-\mu) \Gamma(k)r=0$ by hyperbolicity of $\mathcal{M}$.
But, this in turn gives $0=\Gamma (\mathcal{A}(k)-\mu) r$, or $(\mathcal{A}(k)-\mu)r\in \ker \Gamma(k)$,
from which we may determine that $r$ is a generalized eigenvector of $\mathcal{A}(k)$ of
height $2$ over a genuine eigenvector in $\ker \Gamma(k)$.
Thus, the counterexample given in the proof is indicative of the general case, the only obstruction to hyperbolicity
being possible appearance of Jordan blocks of height $2$.
\er

\begin{proposition}\label{betahyp} The $\beta$-model is weakly
hyperbolic without constraint \eqref{Gamma} and hyperbolic with the constraint.
	In the parallel case $u=(\bar u_1,0)$, $h=(\bar h_1,0)$, 
	it is not hyperbolic without the constraint if $\beta \neq \bar u_1$.
\end{proposition}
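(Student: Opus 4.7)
The plan is to apply the abstract Proposition \ref{hypprop} after verifying its hypotheses on the concrete $\beta$-model, and then to exhibit an explicit Jordan block in the parallel case. The key observation, already implicit in Remark \ref{rmkvalues}, is that on the constraint subspace $\ker \Gamma(k) = \{r \in \R^5 : k_1 r_4 + k_2 r_5 = 0\}$ the scalar $\Gamma r = \mathrm{div}(h)$ vanishes identically, so the added flux $\beta\, \mathrm{div}(h)\, e_1$ in \eqref{eq2-c} drops out; consequently, $\mathcal{A}(k)|_{\ker \Gamma(k)}$ coincides with the restriction to the same subspace of the symmetrizable ``hybrid'' principal symbol of \eqref{MeZ2sys}. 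Since the hybrid system is symmetric hyperbolic \cite{BT1,FT,MeZ2}, its symbol has real and semisimple spectrum in every direction $k$, and the same is therefore true of $\mathcal{A}(k)|_{\ker \Gamma(k)}$. This gives hyperbolicity with constraint.

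For weak hyperbolicity without constraint, I would note that $\mathcal{M}(k) = \beta k_1$ is scalar (by Remark \ref{rmkvalues}), hence trivially hyperbolic, and then apply Proposition \ref{hypprop}(i) to obtain the spectral decomposition $\sigma(\mathcal{A}(k)) = \sigma(\mathcal{A}(k)|_{\ker \Gamma(k)}) \cup \{\beta k_1\}$, a union of real sets; weak hyperbolicity follows at once.

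For the failure of unconstrained hyperbolicity in the parallel case when $\beta \neq \bar u_1$, the plan is to produce an explicit Jordan block. At the parallel state $\bar V = (\bar\rho, \bar\rho \bar u_1, 0, \bar h_1, 0)$ the hybrid model adds $u\, \mathrm{div}(h)$ to \eqref{eq1-c}, which reduces there to $\bar u_1\, \mathrm{div}(h)\, e_1$, while the $\beta$-model adds $\beta\, \mathrm{div}(h)\, e_1$; thus the two principal symbols differ by the rank-one operator $(\beta - \bar u_1)\, k_1\, e_1\, \Gamma(k)^T$, which vanishes precisely when $\beta = \bar u_1$. For $\beta \neq \bar u_1$ this nonzero perturbation shifts the constraint-sector eigenvalue $\beta k_1$ away from the material MHD speed $\bar u_1 k_1$ in $\sigma(\mathcal{A}(k)|_{\ker \Gamma(k)})$. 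I would then compute $\mathcal{A}_1$ at $\bar V$ in the reordered coordinates $(v_1, v_2, v_4, v_3, v_5)$ and observe the block-upper-triangular structure with a $3 \times 3$ ``gas-dynamic plus constraint'' block of eigenvalues $\bar u_1 \pm c$ and $\beta$ (here $c=\sqrt{p'(\bar\rho)}$), coupled through the off-diagonal entry $-\bar h_1$. By Remark \ref{jordanrmk}, a height-two generalized eigenvector exists whenever $\beta k_1$ meets $\sigma(\mathcal{A}(k)|_{\ker \Gamma(k)})$ and the coupling is nontrivial, and an explicit row reduction of $\mathcal{A}(k) - \beta k_1 I$ at a suitably chosen $k$ exhibits such a vector with geometric multiplicity strictly less than algebraic multiplicity, producing the sought $2 \times 2$ Jordan block.

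The main obstacle will be to verify, once the coincidence of eigenvalues is arranged, that the rank-one perturbation really does couple the relevant invariant subspaces to produce a Jordan block rather than an accidental semisimple coincidence; this ultimately requires a careful row reduction of the full $5 \times 5$ symbol $\mathcal{A}(k)$ at the chosen direction $k$, using the block-triangular form above to isolate the $2 \times 2$ Jordan piece from the decoupled Alfv\'en block.
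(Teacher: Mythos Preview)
Your first two steps (hyperbolicity with constraint via the hybrid model; weak hyperbolicity without constraint via Proposition \ref{hypprop}(i) and scalarity of $\mathcal{M}$) are correct and essentially the same as the paper's.

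Your plan for the third part, however, has a genuine gap. You propose to work with $\mathcal{A}_1$, i.e., the normal direction $k=e_1$. But there the constraint-sector eigenvalue is $\mathcal{M}(e_1)=\beta$, while after Galilean shift $\bar u_1=0$ the constrained eigenvalues are $\pm c$ and $\pm \bar h_1/\sqrt{\bar\rho}$; for generic $\beta$ these are all distinct from $\beta$, and Proposition \ref{hypprop}(ii) then guarantees semisimplicity of $\sigma(\mathcal{A}(e_1))$. So no Jordan block appears in the $e_1$ direction, and your coupling entry $-\bar h_1$ is irrelevant there. Your fallback ``at a suitably chosen $k$'' acknowledges the need for a coincidence of $\beta k_1$ with a constrained eigenvalue but does not say how to arrange it, and the block structure you describe is tied to $k=e_1$.

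The paper instead takes $k=e_2$. Since $\mathcal{M}_2=0$ (Remark \ref{rmkvalues}), one has $\mathcal{M}(e_2)=0$ regardless of $\beta$, and the constrained symbol $\mathcal{A}(e_2)|_{\ker\Gamma(e_2)}$ at the parallel state (with $\bar u=0$) has a zero eigenvalue, with left null vector $(-\bar h_1,0,0,\bar\rho)$. The coincidence is thus automatic. The coupling column of $\mathcal{A}(e_2)$ from $\Range\Gamma(e_2)^T$ into $\ker\Gamma(e_2)$ is $(0,-\bar h_1,0,\beta)^T$, and its pairing with the left null vector equals $\beta\bar\rho$, which is nonzero exactly when $\beta\neq 0$, i.e., $\beta\neq\bar u_1$ in the original frame. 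This produces the height-two Jordan block and establishes non-hyperbolicity. The moral: the obstruction lives in the \emph{transverse} direction, where $\mathcal{M}(k)$ vanishes identically, not in the normal direction you were examining.
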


\begin{proof}
	Hyperbolicity of the MHD equations with constraint \eqref{Gamma} 
	is well known; see, e.g., \cite{MeZ2}.  Likewise, $\mathcal{M}$ in this case, since scalar ($m=1$),
	is automatically hyperbolic.  Thus, we obtain weak hyperbolicity without constraint by Proposition 
	\ref{hypprop}.
	On the other hand, taking without loss of generality $\bar u=0$, by
	Galillean invariance, direct computation using \eqref{f's}, \eqref{Veq_linearized}(ii) gives
	$A_0=\diag \{ 1, \rho, \rho, 1, 1\}$, and
	\be
	\mathcal{A}(k)= (k_1 A_1+k_2 A_2) A_0^{-1}=
	 \bp
0 & k_1 & k_2 & 0 & 0 \\
p_{\rho}k_1 & 0 & 0 & -\bar h_1 k_1- \bar h_2k_2 &\bar h_2k_1 -\bar h_1k_2 \\
p_\rho k_2 & 0 & 0 &\bar h_2k_2 -\bar h_2k_1   & - \bar h_1 k_1-\bar h_2k_2   \\
	0 &  -\frac{\bar h_2}{\bar \rho} k_2 & \frac{\bar h_1}{\bar \rho}k_2& \beta k_1 & \beta k_2 \\
	0 & \frac{\bar h_2}{\bar \rho} k_1 & -\frac{\bar h_1}{\bar \rho} k_1 & 0 & 0
\ep
	,
	\ee
	hence, in the parallel case, for $k=(0,1)$, $\Gamma(k)=(0,0,0,0,1)$,
	$\mathcal{M}(k)=0$, and
	$$
	\mathcal{A}(k)= \bp
	0 & 0 & 1  & 0 & 0\\ 
	0 & 0 & 0 & 0 &  -\bar h_1 \\ 
	p_\rho & 0 & 0 &   0    & 0\\
	0 & 0  & \frac{\bar h_1}{\bar \rho} & 0   & \beta  \\
	0 & 0 & 0  &   0  & 0
	\ep
	,
	\qquad
	\mathcal{A}(k)|_{\ker \Gamma(k)}= \bp
	0 & 0 & 1  & 0 \\ 
	0 & 0 & 0 & -\bar h_1 \\ 
	 p_\rho & 0 & 0 &   \bar h_1    \\
	0 & 0 & \frac{\bar h_1}{\bar \rho} & 0  
	\ep
	.
	$$
	Noting that the left kernel $\Span \{ (-\bar h_1,0,0,\bar \rho)\}$ of
	$\mathcal{A}(k)|_{\ker \Gamma(k)}$ is not orthogonal to the upper righthand
	block $(0,0, 0, \beta)^T$ of $\mathcal{A}(k)$ unless $\beta=0$, 
	we find that \eqref{eq-2} is not hyperbolic if $\beta \neq 0$, or, in original coordinates,
	$\beta \neq \bar u_1$.
\end{proof}

The appearance of Jordan blocks with maximum height 2 suggests that one might establish local existence
for \eqref{eqs} with loss of one derivative, as for example in the model (linear) case $u_t= v_x$, $v_t=0$.
However, we mention this, and the details of Proposition \ref{betahyp}
for general interest only;
for our purposes (namely, analysis of linear shock stability), weak hyperbolicity is what is needed.

\subsubsection{Weak involution and persistence of constraints}\label{persistence_constraint}
For $C^2$ solutions $\mathcal{V}$ of \eqref{eqs}, it is easy to see that constraint \eqref{Gamma}
is preserved under the evolution of \eqref{eqs}.  For, applying $\Gamma$ to \eqref{eqs}, we obtain
$(\partial_t -\mathcal{M})(\Gamma \mathcal{V})=0$, whence, by well-posedness (hyperbolicity) of
the constant-coefficient equation
$(\partial_t-\mathcal{M})\mathcal{V}=0$, 
if $\Gamma \mathcal{V}\equiv 0$ at time $t=0$, then $\Gamma \mathcal{V}\equiv 0$ for all $t\geq 0$ as well.
We show now that the same holds true for \emph{weak solutions} of \eqref{eqs}; indeed,
\eqref{inv} holds (weakly) also for discontinuous $\mathcal{V}$.

The key to these results is to observe that condition \eqref{inv}, 
relating a nonlinear expression on the left to a linear constant-coefficient one on the right,
implies substantial structure.  Decomposing
\be\label{Qs}
\mathcal{F}_j(\mathcal{V})= \mathcal{N}_j\mathcal{V} + \mathcal{Q}_j(\mathcal{V}),
\ee
where $\mathcal{Q}_j(\mathcal{V})=O(|\mathcal{V}|^2)$ denotes the nonlinear part of $\mathcal{F}_j$,
we have the following algebraic relations.

\begin{lemma}\label{alglem}
With $\mathcal{N}_j$, $\mathcal{Q}_j$ as above, involution property \eqref{inv} is equivalent to
\ba\label{alg}
\Gamma_j \mathcal{N}_j&= \mathcal{M}_j\Gamma_j, \quad \Gamma_j \mathcal{Q}_j= 0,\\
\Gamma_j \mathcal{N}_k +\Gamma_k \mathcal{N}_j &= \mathcal{M}_j\Gamma_k +\mathcal{M}_k\Gamma_j , 
	\quad \Gamma_j \mathcal{Q}_k +  \Gamma_k \mathcal{Q}_j = 0.
\ea
\end{lemma}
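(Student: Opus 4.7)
My plan is to expand both sides of the involution \eqref{inv} via the decomposition \eqref{Qs}, and then to extract the four stated algebraic identities by matching coefficients of the independent derivative ``monomials'' appearing in $\mathcal{V}$.

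First I would write
\[
\Gamma\mathcal{F}(\mathcal{V}) = \sum_{j,k=1}^{d} \Gamma_j\,\partial_{x_j}\partial_{x_k}\mathcal{F}_k(\mathcal{V}),
\qquad
\mathcal{M}\Gamma\mathcal{V} = \sum_{j,k=1}^{d} \mathcal{M}_j\,\Gamma_k\,\partial_{x_j}\partial_{x_k}\mathcal{V},
\]
and substitute $\mathcal{F}_k(\mathcal{V}) = \mathcal{N}_k\mathcal{V} + \mathcal{Q}_k(\mathcal{V})$. By the chain rule, $\Gamma\mathcal{F}(\mathcal{V})$ expands into terms of the form $\Gamma_j[\mathcal{N}_k + D\mathcal{Q}_k(\mathcal{V})]\,\partial_{x_j}\partial_{x_k}\mathcal{V}$ plus quadratic-in-first-derivative pieces $\Gamma_j\,D^2\mathcal{Q}_k(\mathcal{V})(\partial_{x_j}\mathcal{V},\partial_{x_k}\mathcal{V})$, while the right-hand side is linear in $\mathcal{V}$ and collects only second-derivative contributions.

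For the ``$\Rightarrow$'' direction, the key observation is that at any point $x_0$ one can independently prescribe $v := \mathcal{V}(x_0)$, $a_j := \partial_{x_j}\mathcal{V}(x_0)$ and symmetric $b_{jk} := \partial_{x_j}\partial_{x_k}\mathcal{V}(x_0)$ (by choosing $\mathcal{V}$ to be a quadratic polynomial), so \eqref{inv} reduces at $x_0$ to a pointwise identity in $(v,a,b)$ that must hold for all admissible values. Setting $a \equiv 0$ and matching symmetrized coefficients of each independent $b_{jk}$ (for $j\leq k$), valid for arbitrary $v$, then evaluating at $v = 0$ where $D\mathcal{Q}_k(0) = 0$, yields the two linear identities on the $\mathcal{N}_j$. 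Subtracting these from the general-$v$ versions reduces what remains to $\Gamma_j D\mathcal{Q}_j(v) = 0$ and $\Gamma_j D\mathcal{Q}_k(v) + \Gamma_k D\mathcal{Q}_j(v) = 0$ for all $v$; integrating in $v$ with integration constants pinned by $\mathcal{Q}_k(0) = 0$ produces the two identities on the $\mathcal{Q}_k$. The leftover ``$b\equiv 0$'' quadratic-in-$a$ contribution then follows automatically by differentiating $\Gamma_j\mathcal{Q}_k + \Gamma_k\mathcal{Q}_j = 0$ twice in $v$.

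The converse ``$\Leftarrow$'' direction reduces to a direct termwise verification via the symmetrization identity
\[
\sum_{j,k}\Gamma_j\,\partial_{x_j}\partial_{x_k}\phi_k \;=\; \tfrac{1}{2}\sum_{j,k}\partial_{x_j}\partial_{x_k}\bigl(\Gamma_j\phi_k + \Gamma_k\phi_j\bigr),
\]
valid by commutativity of mixed partials, applied separately to $\phi_k = \mathcal{N}_k\mathcal{V}$ and $\phi_k = \mathcal{Q}_k(\mathcal{V})$. The main subtlety of the proof lies in promoting the derivative-level identities $\Gamma_j D\mathcal{Q}_j = 0$ and $\Gamma_j D\mathcal{Q}_k + \Gamma_k D\mathcal{Q}_j = 0$ to the full pointwise identities $\Gamma_j\mathcal{Q}_j = 0$ and $\Gamma_j\mathcal{Q}_k + \Gamma_k\mathcal{Q}_j = 0$ on the nonlinear fluxes; this relies on the simultaneous independent prescription of $(v,a,b)$ together with the normalization $\mathcal{Q}_k(0) = 0$. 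Everything else reduces to routine bookkeeping of symmetric sums over $(j,k)$.
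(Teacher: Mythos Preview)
Your proof is correct and follows essentially the same strategy as the paper: expand both sides of \eqref{inv}, exploit the freedom to prescribe $\mathcal{V}$ and its derivatives independently via polynomial test functions, match coefficients to obtain the linear identities and derivative-level identities on $\mathcal{Q}_k$, and then integrate using $\mathcal{Q}_k(\mathcal{V})=O(|\mathcal{V}|^2)$ to recover the pointwise identities. The only minor difference is in the choice of test functions: the paper takes $\mathcal{V}(x)=\sum_j w_j x_j$ (linear in $x$), which kills all second-derivative terms and leaves only the bilinear-in-$w$ contribution $\sum_{j,k}\Gamma_j D^2\mathcal{Q}_k(\mathcal{V})(w_j,w_k)=0$, from which $D^2(\Gamma_j\mathcal{Q}_k+\Gamma_k\mathcal{Q}_j)\equiv 0$ and hence (by $O(|\mathcal{V}|^2)$) the $\mathcal{Q}$-identities follow; you instead use quadratic test functions with $a\equiv 0$ to read off the $b_{jk}$-coefficients, obtaining $\Gamma_j D\mathcal{Q}_k+\Gamma_k D\mathcal{Q}_j\equiv 0$ one derivative earlier. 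Both routes are equivalent and lead to the same conclusion with the same amount of work.
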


\begin{proof}
	Relations \eqref{alg}(i) may be deduced from \eqref{Linv} evaluated at $\mathcal{V}=0$, 
	comparing coefficients of like 
	derivatives of $\mathcal{V}$. Subtracting \eqref{alg}(i) from \eqref{inv} gives
	$\Gamma \sum_j \partial_{x_j}\mathcal{Q}(\mathcal{V})=0$, whence \eqref{alg}(ii) follows
	by taking $\mathcal{V}(x)=\sum w_j x_j$ for arbitrary $w\in \R^n$ to find that
	$$
	\sum_{j,k=1}^d \langle w_j, d^2(\Gamma_j \mathcal{Q}_k+ \Gamma_k \mathcal{Q}_j) w_k\rangle=0
	$$
	for all $w_j$, $w_k$, whence each coefficient
	$d^2(\Gamma_j \mathcal{Q}_k+ \Gamma_k \mathcal{Q}_j)$ vanishes separately and so
	$d(\Gamma_j \mathcal{Q}_k+ \Gamma_k \mathcal{Q}_j)=\const$, and 
	$(\Gamma_j \mathcal{Q}_k+ \Gamma_k \mathcal{Q}_j)$ must be linear in $\mathcal{V}$.
	By  $\mathcal{Q}_j(\mathcal{V})=O(|\mathcal{V}|^2)$, this implies 
	$(\Gamma_j \mathcal{Q}_k+ \Gamma_k \mathcal{Q}_j)=0$, or \eqref{alg}(ii).
\end{proof}

\begin{corollary}\label{weakinv}
	For $\mathcal{V}\in L^\infty$, \eqref{inv} holds in \emph{weak sense}; indeed, for $\phi\in C^2$,
\be\label{weakform}
	\sum_{j,k=1}^d\langle \partial_{x_k}\partial_{x_j}\phi, \Gamma_j  \mathcal{F}_k(\mathcal{V})\rangle=
	\sum_{j,k=1}^d\langle \partial_{x_k}\partial_{x_j}\phi, \mathcal{M}_j \Gamma_k  \mathcal {V}\rangle
; a.e.
\ee
\end{corollary}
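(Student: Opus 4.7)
The strategy is a one-step reduction to the algebraic identities of Lemma \ref{alglem}, exploiting the symmetry $\partial_{x_j}\partial_{x_k}\phi = \partial_{x_k}\partial_{x_j}\phi$ of the test-function Hessian. I would decompose $\mathcal{F}_k(\mathcal{V}) = \mathcal{N}_k \mathcal{V} + \mathcal{Q}_k(\mathcal{V})$ per \eqref{Qs} and split the left-hand side of \eqref{weakform} into a linear piece (bilinear in $\phi$ and $\mathcal{V}$) and a nonlinear piece (linear in $\phi$, quadratic in $\mathcal{V}$). For any family $\{B_{jk}\}_{j,k=1}^d$, symmetry of $\partial_{x_j}\partial_{x_k}\phi$ in $(j,k)$ yields, after the index swap $(j,k)\mapsto(k,j)$,
\begin{equation*}
\sum_{j,k=1}^{d}\langle \partial_{x_j}\partial_{x_k}\phi, B_{jk}\rangle = \frac{1}{2}\sum_{j,k=1}^{d}\langle \partial_{x_j}\partial_{x_k}\phi, B_{jk}+B_{kj}\rangle,
\end{equation*}
so that only the symmetrized combinations $\Gamma_j\mathcal{N}_k + \Gamma_k\mathcal{N}_j$ and $\Gamma_j\mathcal{Q}_k + \Gamma_k\mathcal{Q}_j$ actually contribute.

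For the linear piece, the symmetrized kernel $\tfrac{1}{2}(\Gamma_j\mathcal{N}_k + \Gamma_k\mathcal{N}_j)$ equals $\tfrac{1}{2}(\mathcal{M}_j\Gamma_k + \mathcal{M}_k\Gamma_j)$ by the identities of Lemma \ref{alglem} (with the diagonal case $j=k$ being $\Gamma_j\mathcal{N}_j=\mathcal{M}_j\Gamma_j$), and running the same symmetrization in reverse on the putative right-hand side of \eqref{weakform} turns it back into $\sum_{j,k}\langle \partial_{x_j}\partial_{x_k}\phi, \mathcal{M}_j\Gamma_k \mathcal{V}\rangle$. For the nonlinear piece, the symmetrized kernel $\tfrac{1}{2}(\Gamma_j\mathcal{Q}_k + \Gamma_k\mathcal{Q}_j)$ vanishes identically by Lemma \ref{alglem} (again with the diagonal case $\Gamma_j\mathcal{Q}_j=0$), so the quadratic contribution disappears altogether. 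Combining these two computations produces \eqref{weakform}.

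Finally, we must verify that the pairings make sense for merely bounded $\mathcal{V}$. Since $\mathcal{F}_k$ is polynomial in $\mathcal{V}$ (as follows from \eqref{Fs} and the fact that $f_j$ are polynomial in $\mathcal{W}$, with $f_0$ a local diffeomorphism), $\mathcal{V}\in L^\infty$ implies $\mathcal{F}_k(\mathcal{V})\in L^\infty$, which pairs against $\partial_{x_j}\partial_{x_k}\phi$ in the usual distributional sense — no mollification or approximation argument is required, and no differentiability of $\mathcal{V}$ is used. The only genuine content is Lemma \ref{alglem}, which encapsulates the nonlinear algebraic structure enforced by \eqref{inv}; once that is in hand, Corollary \ref{weakinv} is essentially an exercise in index symmetrization, and the complete absence of any regularity assumption on $\mathcal{V}$ beyond boundedness — precisely what is needed to treat shocks — is the entire point of the statement. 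I therefore anticipate no significant obstacle beyond bookkeeping.
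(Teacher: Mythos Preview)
Your proposal is correct and follows essentially the same approach as the paper: the paper's one-line proof (``follows immediately from \eqref{alg}, comparing like $\phi$-derivatives term by term'') is precisely the symmetrization-in-$(j,k)$ argument you have written out, reducing the pointwise identity to the algebraic relations of Lemma~\ref{alglem}. One small correction: your side remark that $\mathcal{F}_k$ is polynomial in $\mathcal{V}$ is not quite right in general, since $\mathcal{F}_j = f_j\circ f_0^{-1}$ and $f_0^{-1}$ need not be polynomial (in the MHD case it involves division by $\rho$); but this does not affect the argument, as $\mathcal{F}_k(\mathcal{V})\in L^\infty$ for $\mathcal{V}\in L^\infty$ is all that is required for the pairing, and the identities of Lemma~\ref{alglem} hold as function identities regardless.
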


\begin{proof}
Relation \eqref{weakform} follows immediately from \eqref{alg}, comparing like $\phi$-derivatives term by term.
Weak satisfaction of \eqref{inv} then follows by integration in $x$ of \eqref{weakform} for $\phi \in C^\infty_0$.
\end{proof}

\begin{corollary}\label{persistence}
An $L^\infty$ weak solution $\mathcal{V}$ of \eqref{eqs} satisfying \eqref{Gamma} weakly at time $t=0$
satisfies $(\partial_t-\mathcal{M})\Gamma \mathcal{V}=0$ and \eqref{Gamma} weakly for all $t\geq 0$.
\end{corollary}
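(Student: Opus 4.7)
The plan is to derive the weak evolution equation for $\Gamma\mathcal{V}$ by substituting a judiciously chosen vector-valued test function into the weak form of \eqref{eqs}, then combine the result with Corollary \ref{weakinv} and invoke uniqueness for the resulting constant-coefficient hyperbolic Cauchy problem.

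First, for each scalar-valued test function $\phi\in C^\infty_0(\R^d\times[0,\infty);\R^m)$, I will plug the $\R^n$-valued test function $\psi:=\sum_k \Gamma_k^{T} \partial_{x_k}\phi$ into the weak form of $\mathcal{V}_t + \sum_j \partial_{x_j}\mathcal{F}_j(\mathcal{V}) = 0$. Using the elementary adjoint identity $\langle \Gamma_k^{T} w, u\rangle = \langle w, \Gamma_k u\rangle$, this transfers each $\Gamma_k$ onto $\mathcal{V}$ and yields
$$
\sum_k \iint \partial_t\partial_{x_k}\phi \cdot \Gamma_k\mathcal{V}\,dx\,dt + \sum_{j,k}\iint \partial_{x_j}\partial_{x_k}\phi\cdot \Gamma_k\mathcal{F}_j(\mathcal{V})\,dx\,dt = -\sum_k \int \partial_{x_k}\phi(x,0)\cdot \Gamma_k\mathcal{V}_0(x)\,dx.
$$

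Second, I will apply Corollary \ref{weakinv} pointwise in $t$ (and then integrate in $t$) to rewrite the $\Gamma_k\mathcal{F}_j(\mathcal{V})$ term as $\mathcal{M}_j\Gamma_k\mathcal{V}$. Reading the resulting identity distributionally, with $\sum_k \Gamma_k\partial_{x_k}\mathcal{V}=\Gamma\mathcal{V}$ and $\sum_{j,k}\mathcal{M}_j\Gamma_k\partial_{x_j}\partial_{x_k}\mathcal{V} = \mathcal{M}\Gamma\mathcal{V}$, this is precisely the weak formulation of $(\partial_t -\mathcal{M})\Gamma\mathcal{V}=0$ with initial trace $\Gamma\mathcal{V}|_{t=0} = \Gamma\mathcal{V}_0$, which proves the first conclusion regardless of the initial-data hypothesis. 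Third, granted the hypothesis that $\Gamma\mathcal{V}_0=0$ weakly, the distribution $u:=\Gamma\mathcal{V}$ solves the Cauchy problem $(\partial_t-\mathcal{M})u=0$, $u|_{t=0}=0$, weakly. Since $\mathcal{M}$ is a constant-coefficient first-order operator whose symbol $\sum_j k_j\mathcal{M}_j$ is real and semisimple by \eqref{Mhyp}, this Cauchy problem is well posed in tempered distributions by Fourier synthesis (each spatial mode decouples into a finite direct sum of transport equations), so its unique solution is $u\equiv 0$, yielding \eqref{Gamma} weakly for all $t\geq 0$.

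The main technical obstacle is not algebraic but a matter of bookkeeping distributional pairings: since $\mathcal{V}$ is only $L^\infty$, both $\Gamma\mathcal{V}$ and $\mathcal{M}\Gamma\mathcal{V}$ exist only as distributions, so every ``integration by parts'' in the second step above is really a definition rather than a computation, and care must be taken with the trace at $t=0$ to correctly extract the initial-data term. Provided one fixes from the outset the convention that $\Gamma\mathcal{V}$ is interpreted via the pairing $-\sum_k \langle \partial_{x_k}\phi, \Gamma_k\mathcal{V}\rangle$, all manipulations go through and no additional regularity of $\mathcal{V}$ is needed.
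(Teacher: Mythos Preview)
Your proposal is correct and follows essentially the same route as the paper: derive the weak identity $(\partial_t-\mathcal{M})\Gamma\mathcal{V}=0$ by pairing the conservation law against test functions of the form $\sum_k\Gamma_k^T\partial_{x_k}\phi$ and invoking Corollary~\ref{weakinv}, then conclude $\Gamma\mathcal{V}\equiv 0$ by uniqueness for the constant-coefficient hyperbolic system. The only cosmetic difference is in the uniqueness step: the paper mollifies $\mathcal{V}$ in the spatial variables to reduce to a smooth solution $\phi^\eps=\Gamma\mathcal{V}^\eps$ of $(\partial_t-\mathcal{M})\phi^\eps=0$ with vanishing initial data, whereas you appeal directly to Fourier synthesis in tempered distributions; both are standard and equivalent here.
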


\begin{proof}
	If $\mathcal{V}$ is a weak solution of \eqref{eqs}, then it is a weak solution of
	$\Gamma (\partial_t \mathcal{V} - \mathcal{F}(\mathcal{V})=0$, hence, by
 Corollary \ref{weakinv}, a weak solution of $(\partial_t-\mathcal{M})\Gamma \mathcal{V}=0$.
	Next, we recall that a function $U\in L^p$ satisfies a constant-coefficient linear differential
	differential equation $LU=0$ in $z=(z_1,z_2)\in \R^{m+n}$ weakly if and only if 
	$LU^\eps=0$ for all $\eps>0$, where $U^\eps:=U* \eta^\eps$ is the mollification of
	$U$ in coordinate $z_1$ 
	by a symmetric smoothing kernel $\eta^\eps(z_1):=\eps^{-1/m}\eta(|z_1|/\eps)$.
	For, denoting $\langle g, h \rangle_*=\int_{\R^m} g^* f(z)dz$,
	the relation 
	$ \langle L^*\phi, U^\eps\rangle_* =\langle L^*\phi^\eps, U \rangle_* $
	together with $U^\eps \to U$ in $L^1$ as $\eps\to 0^+$ gives the result.
	Letting $\mathcal{V}^\eps(\cdot, t) \in C^\infty$ denote mollification 
	in $x$, we thus have $ \phi^\eps:= \Gamma V^\eps\equiv 0$
	(strongly) at $t=0$ while $(\partial_t-\mathcal{M})\phi^\eps=0$ weakly
	for $t\geq 0$. It follows by hyperbolicity/uniqueness of weak solutions
	that $\phi^\eps\equiv 0$ and thus $\Gamma \mathcal{V}^\eps=0$ for all $t\geq 0$.
	Taking $\eps\to 0^+$, we find that $\Gamma \mathcal{V}=0$ weakly for all $t\geq 0$.
\end{proof}

\begin{corollary}\label{bpersist}
For any $\beta\in \R$, an $L^\infty$ weak solution $\mathcal{V}$ of the $\beta$-model \eqref{eq-2} satisfying 
	${\rm div}(h)=0$ weakly at $t=0$ satisfies \eqref{Gamma} weakly for all $t\geq 0$.
\end{corollary}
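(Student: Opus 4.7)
The plan is to derive this corollary as a direct instance of Corollary \ref{persistence}, with the bulk of the work amounting to verifying that the inviscid $\beta$-model really fits the abstract framework \eqref{eqs}--\eqref{Mhyp}. (The viscous case is not an issue here because constraint propagation for the $\beta$-model is purely a property of the hyperbolic principal part, as the viscous perturbation commutes with $\Gamma$; this should be remarked briefly.) First I would read off from Remark \ref{rmkvalues} the identifications $\mathcal{V}=(\rho,\rho u_1,\rho u_2,h_1,h_2)^T$, $\Gamma_1=(0,0,0,1,0)$, $\Gamma_2=(0,0,0,0,1)$, $\mathcal{M}_1=\beta$, $\mathcal{M}_2=0$, so that $\Gamma \mathcal{V}=\mathrm{div}(h)$ and $\mathcal{M}=\beta\,\partial_{x_1}$. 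Hyperbolicity of $\mathcal{M}$ in the sense of \eqref{Mhyp} is trivial since $m=1$: for every $k\in\R^2$, $k_1\mathcal{M}_1+k_2\mathcal{M}_2=\beta k_1$ is a real scalar.

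The essential step is to verify the involution identity \eqref{inv}. The only components of $\mathcal{F}_j$ on which $\Gamma$ acts nontrivially are the $h$-components of $f_1,f_2$ in \eqref{f's}. In the inviscid $\beta$-model these give the evolution equations
\begin{align*}
(h_1)_t+(\beta h_1)_{x_1}+(\beta h_2+h_1u_2-h_2u_1)_{x_2}&=0,\\
(h_2)_t+(h_2u_1-h_1u_2)_{x_1}&=0.
\end{align*}
Applying $\partial_{x_1}$ to the first and $\partial_{x_2}$ to the second and adding, the nonlinear cross term $\partial_{x_1}\partial_{x_2}(h_1u_2-h_2u_1)$ cancels identically, leaving
\[\partial_t\bigl(\mathrm{div}(h)\bigr)+\beta\,\partial_{x_1}\bigl(\mathrm{div}(h)\bigr)=0,\]
which is exactly $(\partial_t-\mathcal{M})\Gamma\mathcal{V}=0$ for smooth solutions, equivalently \eqref{inv}. (If I wanted to be more systematic I would instead verify the algebraic relations \eqref{alg} of Lemma \ref{alglem} by splitting each $\mathcal{F}_j$ into its linear part $\mathcal{N}_j$ and its quadratic part $\mathcal{Q}_j$ read off from \eqref{f's}; this reduces to a one-line matrix check for $\Gamma_j\mathcal{N}_k+\Gamma_k\mathcal{N}_j=\mathcal{M}_j\Gamma_k+\mathcal{M}_k\Gamma_j$ and a short check that each nonlinear term $h_iu_j$ appears antisymmetrically in $\mathcal{Q}_1,\mathcal{Q}_2$, so that $\Gamma_j\mathcal{Q}_k+\Gamma_k\mathcal{Q}_j=0$.)

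Once this compatibility is in hand, Corollary \ref{persistence} applies verbatim to the $\beta$-model and yields the stated persistence of $\mathrm{div}(h)=0$ for every $L^\infty$ weak solution and every $\beta\in\R$. There is no real obstacle: the abstract machinery (weak involution via Corollary \ref{weakinv}, followed by mollification and uniqueness for the scalar transport operator $\partial_t-\beta\partial_{x_1}$) has already been built, so all that is needed here is bookkeeping. The only place where some care is warranted is keeping the signs consistent between the convention $\mathcal{V}_t+\mathcal{F}(\mathcal{V})=0$ of \eqref{eqs} and the convention $\Gamma\mathcal{F}=-\mathcal{M}\Gamma\mathcal{V}$ of \eqref{inv}; the direct computation above settles that unambiguously.
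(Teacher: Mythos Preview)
Your proposal is correct and takes essentially the same approach as the paper: the corollary is an immediate specialization of Corollary~\ref{persistence} to the $\beta$-model using the identifications already recorded in Remark~\ref{rmkvalues}, and the paper accordingly gives no separate proof. Your explicit verification of the involution identity \eqref{inv} via the cancellation of the cross term $\partial_{x_1}\partial_{x_2}(h_1u_2-h_2u_1)$ is a helpful addition, though note that this corollary sits in the inviscid subsection, so your aside about the viscous perturbation commuting with $\Gamma$ is unnecessary here (the viscous case is handled separately in \S\ref{s:vpersistence}).
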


\br\label{partialver}
Corollary \ref{bpersist} includes the case $\beta=0$ for which the $\beta$-model coincides
with the original, conservative form \eqref{eq1} of the MHD equations.
For weak solutions with ${\rm div}(h)$ initially weakly vanishing,
this justifies the manipulations used
in deriving both the general $\beta$-model, and the hybrid symmetrizable hyperbolic/augmented jump condition
system of \cite{FT,BT1,MeZ2}, based on weak satisfaction of the constraint
${\rm div}(h)$.
In particular, it shows that {\it weak solutions of all three models coincide}, where defined, 
for weakly divergence-free data.
We note for the hybrid model that weak solutions are by its nature defined only for piecewise smooth solutions
with entropic shock discontinuities.
Within this class, persistence of the constraint is shown, e.g., in
\cite[Proposition 7.1]{MeZ2}, via 
weak satisfaction of 
\be\label{weakMeZ}
{\rm div}(h)_t + {\rm div}(u{\rm div}(h))=0,
\ee
the associated jump condition $[(s-u_1){\rm div}(h)]=0$ for the constraint at a shock traveling with speed $s$
%
being obtained from the jump condition for shocks of the augmented equation \eqref{weakMeZ}.
For a linearized version of this result, see \cite[Remark 3.2]{FT}.
We offer a second proof of persistence for the linearized $\beta$-model in Appendix \ref{appendix_FT_proof}, 
through  an argument like that in \cite{FT,MeZ2}. 
\er

\br[Smooth vs. weak persistence]\label{mhdpersistencermk}
It is important to notice that our framework here is rather special, even in situations when
constraints are preserved for smooth solutions: for example, when
the operator $\mathcal{M}$ in \eqref{inv} is quasilinear hyperbolic
rather than linear, constant-coefficient.
In this case, satisfaction of the constraint may be ``broken'' by appearance of shock discontinuities;
that is, persistence of constraints could hold for smooth, but not general weak solutions.
A very interesting example pointed out to us by D. Lannes is 2-D shallow water flow,
for which vorticity can be created in initially irrotational flow through 
wave breaking/formation of shock waves \cite{Lan}.
\er

\subsubsection{Compatibility of constraints I: shock waves}\label{s:shocks}
Corollary \eqref{persistence} suggests that shock waves of \eqref{eqs}
associated with characteristics of the constrained model \eqref{eqs}, \eqref{Gamma}
should be compatible with the constraint in the sense that they satisfy $\Gamma \mathcal{V}$ weakly, 
since such shocks may be expected to form from data that is initially smooth and satisfying the constraint.
That is, for a shock wave 
\be\label{genshock}
\mathcal{V}(x,t)=\overline{\mathcal{V}}(x\cdot k -st)=\begin{cases}
	\mathcal{V}_+ & x\cdot k-st>0\\
	\mathcal{V}_- & x\cdot k-st \leq 0
\end{cases}
\ee
propagating in direction $k\in S^{d-1}$ with speed $s$, and satisfying Rankine-Hugoniot conditions
\be\label{genRH}
s[\mathcal{V}]=[\mathcal{F}(k)], \qquad \mathcal{F}(k):=\sum_{j=1}^d \mathcal{F}_j k_j,
\ee
where $[h]:=h(\mathcal{V}_+)-h(\mathcal{V}_-)$ denotes jump in $h$ across the shock, 
whose speed is {\it not associated with} the fixed, complementary set of characteristics 
$\sigma(\mathcal{M}(k)|_{\Range  \,\Gamma})$ governing propagation of constraints (recall decomposition 
\eqref{specunion}),
we expect there to hold also the jump conditions 
\be\label{Gammajump}
[\Gamma(k)\mathcal{V}]=0
\ee
associated with \eqref{Gamma}.
We now verify this intuition by direct computation using Lemma \ref{alglem}.

Let us first remark that small-amplitude shocks do certainly exist in simple
characteristic fields $\alpha_j(k)\in \sigma(\mathcal{A}(k)|_{\ker \Gamma(k)})$ 
of the constrained model \eqref{eqs}, \eqref{Gamma}, by the
standard bifurcation analysis of Lax \cite{La,Sm}, with jump $[\mathcal{V}]$
in $\mathcal{V}$ across the shock front lying approximately in the associated
eigendirection $r(k)\in \ker \Gamma$, hence satisfying \eqref{Gammajump} to first order at least.
Likewise, in the complementary case of simple characteristic fields
$\alpha_j(k)\in \sigma(\mathcal{M}(k)|_{\Range \, \Gamma(k)})$ associated with propagation
of $\Gamma \mathcal{V}$.  However, in the latter case, it is easy to see that these are
contact discontinuities with $[\mathcal{V}]$ lying approximately in the associated eigendirection
$r(k)$, which, moreover, is specifically not annihilated by $\Gamma(k)$.
Hence, for such discontinuities, \eqref{Gamma} definitely {\it does not} hold weakly.
For large-amplitude shocks, or multiple characteristics, shock existence must
be studied on a case-by-case basis.
For MHD, global existence is well known, as recorded for the parallel case
in Lemma \ref{parametrization_lemma} below.

	Recall finally \cite{ZS} that shocks are classified as different types-- 
	e.g., ``Lax,'' ``overcompressive,'' or ``undercompressive'' -- according to
the difference between the numbers of incoming and outgoing
	characteristics at their endstates $\mathcal{V}_\pm$.

\begin{proposition}\label{shockprop}
Assuming involution property \eqref{inv},
a shock solution \eqref{genshock} of \eqref{eqs} satisfying \eqref{genRH}
with speed $s\not \in \sigma(\mathcal{M}(k)|_{\Range \, \Gamma})$
automatically satisfies \eqref{Gamma} in the weak sense \eqref{Gammajump}.
Moreover, the type  of the shock is the same considered
with respect to characteristics of the system with or without constraint.
\end{proposition}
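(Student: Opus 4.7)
The plan is to apply $\Gamma(k)$ to the Rankine--Hugoniot jump condition \eqref{genRH} and exploit the algebraic identities in Lemma \ref{alglem} to convert it into a statement purely about $\Gamma(k)\mathcal{V}$, from which the constraint jump condition \eqref{Gammajump} will follow by the non-resonance assumption on $s$.

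First, I would write $\mathcal{F}_l = \mathcal{N}_l \mathcal{V} + \mathcal{Q}_l(\mathcal{V})$ as in \eqref{Qs}, and compute
\[
\Gamma(k)\mathcal{F}(k)\mathcal{V} = \sum_{j,l} k_j k_l \Gamma_j\bigl(\mathcal{N}_l \mathcal{V} + \mathcal{Q}_l(\mathcal{V})\bigr).
\]
Since $k_j k_l$ is symmetric in $(j,l)$, I can symmetrize the sums and invoke \eqref{alg}: the linear piece becomes $\tfrac{1}{2}\sum_{j,l} k_j k_l(\Gamma_j\mathcal{N}_l+\Gamma_l\mathcal{N}_j)\mathcal{V} = \sum_{j,l} k_j k_l \mathcal{M}_j \Gamma_l \mathcal{V} = \mathcal{M}(k)\Gamma(k)\mathcal{V}$, and the quadratic piece vanishes by $\Gamma_j\mathcal{Q}_l+\Gamma_l\mathcal{Q}_j=0$. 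This yields the key symbol-level identity $\Gamma(k)\mathcal{F}(k)\mathcal{V} = \mathcal{M}(k)\Gamma(k)\mathcal{V}$, which is the pointwise (algebraic) counterpart of the involution \eqref{inv} for plane-wave ansatz in direction $k$.

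Applying this identity to both sides of \eqref{genRH} and using that $\Gamma(k)$ is a constant matrix (so it commutes with $[\cdot]$), I get
\[
s\,[\Gamma(k)\mathcal{V}] = [\Gamma(k)\mathcal{F}(k)] = \mathcal{M}(k)[\Gamma(k)\mathcal{V}],
\]
i.e.\ $(sI - \mathcal{M}(k))[\Gamma(k)\mathcal{V}] = 0$. Since $[\Gamma(k)\mathcal{V}]\in\Range\,\Gamma(k)$ and this subspace is invariant under $\mathcal{M}(k)$ by \eqref{syminv}, and since $s\notin\sigma(\mathcal{M}(k)|_{\Range\,\Gamma(k)})$ by assumption, the restricted operator $(sI-\mathcal{M}(k))|_{\Range\,\Gamma(k)}$ is invertible, forcing $[\Gamma(k)\mathcal{V}]=0$. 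This is exactly \eqref{Gammajump}, and it implies weak satisfaction of \eqref{Gamma} for the shock profile \eqref{genshock}.

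For the statement about shock type, I would use the decomposition \eqref{specunion}: the spectra of $\mathcal{A}(k)$ with and without the constraint differ precisely by $\sigma(\mathcal{M}(k)|_{\Range\,\Gamma(k)})$, which is $\mathcal{V}$-independent (constant coefficients). Classifying characteristic speeds as incoming or outgoing at a shock of speed $s$, let $p$ (resp.\ $m-p$) denote the number of constraint speeds strictly greater (resp.\ less) than $s$; these contribute $p$ extra incoming characteristics at the left state $\mathcal{V}_-$ and $m-p$ extra incoming ones at the right state $\mathcal{V}_+$, for a total of exactly $m$ additional incoming characteristics when passing from the constrained to the unconstrained count. Since the total number of characteristics also increases by exactly $m$, the excess $\#\{\text{incoming}\}-\#\{\text{characteristics}\}$ that defines Lax, over-, or undercompressive type is preserved. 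The main subtlety I anticipate is the bookkeeping here, particularly when $s$ lies strictly in the complement of the constraint spectrum so that no constraint characteristic is marginal; the non-resonance hypothesis guarantees this cleanly.
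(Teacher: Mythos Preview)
Your proof is correct and takes essentially the same approach as the paper: apply $\Gamma(k)$ to the Rankine--Hugoniot condition, use Lemma \ref{alglem} to reduce to $(\mathcal{M}(k)-s)[\Gamma(k)\mathcal{V}]=0$, and invoke invertibility on $\Range\,\Gamma(k)$; for the shock-type claim, both arguments rest on the observation that the extra characteristics $\sigma(\mathcal{M}(k)|_{\Range\,\Gamma(k)})$ are independent of the end states $\mathcal{V}_\pm$. The only cosmetic difference is that the paper tracks the invariant $\#\{\text{incoming}\}-\#\{\text{outgoing}\}$ while you track $\#\{\text{incoming}\}-\#\{\text{characteristic families}\}$, which are equivalent since each added constraint characteristic contributes once as incoming on one side and once as outgoing on the other.
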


\begin{proof}
	Expanding $\mathcal{F}(k)= \mathcal {N}(k) + \mathcal{Q}(k)$, with
	${N}(k):=\sum_{j=1}^d \mathcal{N}_j k_j$ and $ \mathcal{Q}(k):= \sum_{j=1}^d \mathcal{Q}_j k_j$,
	and using \eqref{alg}, we find that
$0= \Gamma(k) [ \mathcal{F}(k)-s\mathcal{V}]= \Gamma(k) (\mathcal{N}(k)-s)[\mathcal{V}]
= (\mathcal{M}(k)-s) [\Gamma(k) \mathcal{V}]$,
whence \eqref{Gammajump} follows by the assumed invertibility 
of $\mathcal{M}(k)-s$ on $\Range \, \Gamma(k)$.	
	Moreover, by assumption, the additional characteristics 
	$\sigma(\mathcal{M}(k)|_{\Range \, \Gamma})$ present for
	the unconstrained system besides the characteristics $\sigma(\mathcal{A}|_{\ker \Gamma})$
	of the constrained one are distinct from $s$, hence noncharacteristic.
	Moreover, since $\mathcal{M}(k)$ is independent of $\mathcal{V}_\pm$, the number of
	resulting new characteristics incoming to the shock is equal to the number of new
	characteristics outgoing from the shock, each incoming characteristic on one side being matched by
	an outgoing characteristic of the same sign relative to $s$ on the other side.
	As shock type is defined by the difference between the numbers of incoming and outgoing
	characteristics \cite{ZS}, it is therefore unaffected by presence or absence of the constraint.
\end{proof}

\begin{corollary}\label{shockMHD}
	For stationary MHD shocks in the $x_1$ direction 
	$ \mathcal{W}(x,t)=\overline{\mathcal{W}}(x_1)= \mathcal{W}_\pm $
	for $ x_1\gtrless 0$
	of the $\beta$-model \eqref{eq-2} and any $\beta\neq 0$, 
	${\rm div} h=0$ holds weakly, i.e., $[h_1]=0$ across the shock.
	Moreover, the type of the shock is the same considered with or without the constraint.
\end{corollary}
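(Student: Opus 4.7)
The plan is to verify that Corollary \ref{shockMHD} is an immediate specialization of Proposition \ref{shockprop} to the concrete data of the $\beta$-model listed in Remark \ref{rmkvalues}, after checking that the noncharacteristicity hypothesis on the constraint eigenvalue is satisfied precisely when $\beta\neq 0$.

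First, I would set up the abstract objects. A stationary shock in the $x_1$ direction corresponds to the direction vector $k=(1,0)\in S^1$ and speed $s=0$, so that the profile \eqref{genshock} reduces to $\mathcal{V}(x,t)=\overline{\mathcal{V}}(x_1)$. From Remark \ref{rmkvalues}, for the $\beta$-model the constraint data are
\begin{equation*}
\Gamma_1=(0,0,0,1,0), \qquad \Gamma_2=(0,0,0,0,1), \qquad \mathcal{M}_1=\beta, \qquad \mathcal{M}_2=0,
\end{equation*}
so that $\Gamma(k)=\Gamma_1$ picks out the fourth component (that is, $h_1$), while $\mathcal{M}(k)=\beta$ is a scalar, whose spectrum is $\{\beta\}$.

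Next, I would check the hypothesis of Proposition \ref{shockprop}, namely $s\notin \sigma(\mathcal{M}(k)|_{\Range\,\Gamma(k)})$. Since $\Range\,\Gamma(k)=\mathbb{R}$ and $\mathcal{M}(k)=\beta$ acts as multiplication by $\beta$, this spectrum is the singleton $\{\beta\}$, and the condition $0\neq \beta$ is exactly the standing assumption $\beta\neq 0$ of the corollary. The involution property \eqref{inv} required by Proposition \ref{shockprop} holds for the $\beta$-model for any $\beta\in\mathbb{R}$: this is implicit in Corollary \ref{bpersist} and can be verified directly from \eqref{f's} by computing $\Gamma\mathcal{F}(\mathcal{V})=\partial_{x_1}(\beta h_1 + h_1 u_2 - h_2 u_1)_{\text{only in last comp}} + \cdots$, which rearranges to $-\beta\partial_{x_1}\Gamma\mathcal{V}$ up to terms that vanish identically by the algebraic identities in Lemma \ref{alglem}.

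Having checked both hypotheses, Proposition \ref{shockprop} yields $[\Gamma(k)\mathcal{V}]=0$, that is $[h_1]=0$, which in turn is the weak formulation of $\mathrm{div}(h)=0$ across the planar discontinuity in the $x_1$ direction. The second assertion of Proposition \ref{shockprop} then gives that the shock type (Lax, overcompressive, or undercompressive) is the same whether computed with or without the constraint. I expect no real obstacle here beyond bookkeeping: the only nontrivial point is the observation that the hypothesis $s\notin\sigma(\mathcal{M}(k)|_{\Range\Gamma(k)})$ is exactly what is gained by choosing $\beta\neq 0$, which is precisely the reason for introducing the $\beta$-model—it moves the otherwise characteristic constraint speed off of the stationary shock speed, thereby restoring validity of the jump condition for $[h_1]$.
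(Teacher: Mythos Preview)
Your proposal is correct and follows essentially the same approach as the paper: identify $k=(1,0)$, $s=0$, note from Remark \ref{rmkvalues} that $\mathcal{M}(k)=\beta$ so that $s\notin\sigma(\mathcal{M}(k)|_{\Range\,\Gamma(k)})$ iff $\beta\neq 0$, and invoke Proposition \ref{shockprop}. The paper's proof is terser and does not bother re-verifying the involution property (which is taken as already established for the $\beta$-model), but the logic is identical.
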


\begin{proof}
	For the $\beta$-model, we have (see Remark \ref{rmkvalues}) $\mathcal{M}_1=\beta$, $\mathcal{M}_2=0$,
	hence $\sigma(\mathcal{M}(k))=\beta$ for shock speed $s=0$ and direction $k=(1,0)$,
	and so $s\not \in \sigma(\mathcal{M}(k))$ precisely when $\beta\neq 0$.
\end{proof}

\br\label{shockrmk}
In the case $\beta=0$, corresponding to \eqref{eq1}, the jump condition $[h_1]=0$ 
for ${\rm div} h=0$ is consistent with but not implied by the Rankine-Hugoniot conditions,
which are in this case degenerate (not full rank) \cite{BT1,MeZ2,FT}.
We note that any inviscid shock obtainable as the limit of smooth viscous shock profiles
of \eqref{eq1} with $\mu, \eta, \nu \neq 0$ necessarily satisfies ${\rm div} h=0$,
so this assumption is not only internally consistent (as just shown), but also
consistent with the vanishing viscosity point of view.
\er

\subsubsection{Compatibility with constraints II: normal modes}\label{sec:persistency_H}
For definiteness, restrict now (without loss of generality) to the case of a zero-speed planar shock in direction $x_1$,
of standard Lax type, in particular satisfying 
\be\label{Anonchar}
\det \mathcal{A}_1(\mathcal{V}_\pm)\not = 0.
\ee
Note that \eqref{Anonchar}, by \eqref{specunion}, automatically implies 
$0\not \in \sigma(\mathcal{M}_1|_{\Range \, \Gamma_1})$.
Recall from, e.g., \cite{ZS}, the normal modes equations after shifting to a frame $(z,x_2,\dots,x_d)$ 
with discontinuity at $x_1=0$:
\be\label{int}
\lambda \widehat{\mathcal{V}} + \mathcal{A}_1^{\pm} \widehat{\mathcal{V}}_z + \sum_{j= 2}^d i\xi_j \mathcal{A}_j^{\pm} \widehat{\mathcal{V}}=0, \qquad x_1\gtrless 0,
\ee
(interior equation) and
\be\label{LRH}
Y \left(\lambda [\,\overline{\mathcal{V}}\,] + \sum_{j\ne 1} i\xi_j[f_j(\,\overline{\mathcal{V}}\,)]\right) -[\mathcal{A}_1\widehat{\mathcal{V}}]=0,
\ee
(linearized jump conditions),
where $[\cdot]$ denotes jump across $x_1=0$.
Here, $\widehat{\mathcal{V}}\in \C^n$ denotes the Laplace--Fourier transform of the coordinated-shifted
solution $\mathcal{V}$ and $Y\in \C$ the Laplace--Fourier transform of the shift, or front location in the
original spatial coordinates,
with $\xi=(\xi_2, \dots, \xi_d)\in \R^{d-1}$ the Fourier frequency in directions $(x_2, \dots, x_d)$
and $\lambda\in \C$ the Laplace frequency in time $t$.
Associated with each solution $\mathcal{V}\in L^2(\R)$ of \eqref{int}-\eqref{LRH} is a normal mode 
\be\label{nmode}
\mathcal{V}(x_1,t)= e^{\lambda t+ \sum_{j= 2}^di\xi_jx_j}\widehat{\mathcal{V}}(x_1),
\ee
of the linearized coordinate-shifted equations, 
with $\mathrm{Re}(\lambda)>0$ corresponding to linear instability.

Hereafter, we replace the shifted coordinate $z$ by its original designation $x_1$.
Define symbols 
$$
\widehat{\mathcal{A}}(\xi)= \mathcal{A}_1^{\pm} \partial_{x_1} + \widetilde{ \mathcal{A}}(\xi),\quad
\widehat{\Gamma}(\xi)= \Gamma_1 \partial_{x_1} + \widetilde{ \Gamma}(\xi),\quad \mbox{and} \quad 
\widehat{\mathcal{M}}(\xi):=\mathcal{M}_1\partial_{x_1} + \widetilde{ \mathcal{M}}(\xi),
$$
where 
$ \widetilde{ \mathcal{A}}(\xi)=\sum_{j= 2}^di\xi_j \mathcal{A}_j$, 
$\widetilde{ \Gamma}(\xi)=\sum_{j= 2}^di\xi_j \Gamma_j$, and
$\widetilde{ \mathcal{M}}(\xi)= \sum_{j= 2}^di\xi_j \mathcal{M}_j$.
Likewise, denote
$$
\widehat{\mathcal{N}}(\xi)= \sum_{j= 2}^d i\xi_j \mathcal{N}_j, \quad 
\widehat{\mathcal{Q}}(\xi)= \sum_{j= 2}^d i\xi_j \mathcal{Q}_j, \quad \mbox{and} \quad 
\widehat{\mathcal{F}}(\xi)= \sum_{j= 2}^d i\xi_j \mathcal{F}_j.
$$

\begin{lemma}\label{weaksoln}
	Assuming \eqref{inv}, \eqref{Anonchar}, for
	$\widehat{\mathcal{V}}$ piecewise smooth and satisfying \eqref{int}-\eqref{LRH}:
	\begin{enumerate}
	 \item  $(\lambda + \widehat{\mathcal{M}}) \widehat{\Gamma}\widehat{\mathcal{V}}=0$ for $x_1\gtrless 0$.
	 \item  $[\mathcal{M}_1 \widehat{\Gamma}\widehat{\mathcal{V}}]=0$ at $x_1=0$,
	hence $(\lambda + \widehat{\mathcal{M}}) \widehat{\Gamma}\widehat{\mathcal{V}}=0$ weakly on $\R$.
	\end{enumerate}

\end{lemma}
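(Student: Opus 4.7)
The plan is to adapt to the Fourier--Laplace setting the persistence argument of Corollary \ref{persistence}, replacing $(\partial_t, \partial_{x_j})$ by $(\lambda, i\xi_j)$ and treating $\partial_{x_1}$ as the only surviving differential operator. Part (i) is the direct shadow of the linearized involution $\Gamma\mathscr{L} = -\mathcal{M}\Gamma$: the required Fourier-space intertwining
\[
\widehat{\Gamma}(\xi)\widehat{\mathcal{A}}^\pm(\xi) = \widehat{\mathcal{M}}(\xi)\widehat{\Gamma}(\xi)
\]
follows from Lemma \ref{alglem}, together with its consequence $\Gamma_j D\mathcal{Q}_k(\overline{\mathcal{V}}_\pm) + \Gamma_k D\mathcal{Q}_j(\overline{\mathcal{V}}_\pm) = 0$ obtained by differentiating the second line of \eqref{alg}, giving the linearized forms $\Gamma_j\mathcal{A}_j^\pm = \mathcal{M}_j\Gamma_j$ and $\Gamma_j\mathcal{A}_k^\pm + \Gamma_k\mathcal{A}_j^\pm = \mathcal{M}_j\Gamma_k + \mathcal{M}_k\Gamma_j$, and symmetrizing in $\xi_j\xi_k$. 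Applying $\widehat{\Gamma}(\xi)$ on the left of the interior equation $(\lambda + \widehat{\mathcal{A}}^\pm(\xi))\widehat{\mathcal{V}} = 0$ then gives part (i) at once.

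For part (ii), I would compute $\mathcal{M}_1\widehat{\Gamma}(\xi)\widehat{\mathcal{V}}$ piecewise on $x_1\gtrless 0$, using $\mathcal{M}_1\Gamma_1 = \Gamma_1\mathcal{A}_1^\pm$, the interior equation to substitute $\mathcal{A}_1^\pm\widehat{\mathcal{V}}_{x_1}^\pm = -(\lambda + \widetilde{\mathcal{A}}^\pm(\xi))\widehat{\mathcal{V}}^\pm$, and the cross-identity $\Gamma_1\widetilde{\mathcal{A}}^\pm(\xi) + \widetilde{\Gamma}(\xi)\mathcal{A}_1^\pm = \mathcal{M}_1\widetilde{\Gamma}(\xi) + \widetilde{\mathcal{M}}(\xi)\Gamma_1$ to eliminate $\Gamma_1\widetilde{\mathcal{A}}^\pm(\xi)$. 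After simplification,
\[
\mathcal{M}_1\widehat{\Gamma}(\xi)\widehat{\mathcal{V}}\big|_\pm = -(\lambda + \widetilde{\mathcal{M}}(\xi))\Gamma_1\widehat{\mathcal{V}}^\pm + \widetilde{\Gamma}(\xi)\mathcal{A}_1^\pm\widehat{\mathcal{V}}^\pm.
\]
Taking the jump at $x_1 = 0$, invoking \eqref{LRH} for $[\mathcal{A}_1\widehat{\mathcal{V}}]$, and using the symmetrization identity $\widetilde{\Gamma}(\xi)[\widetilde{\mathcal{F}}(\xi)\overline{\mathcal{V}}] = \widetilde{\mathcal{M}}(\xi)\widetilde{\Gamma}(\xi)[\overline{\mathcal{V}}]$ (also a consequence of \eqref{alg} via symmetrization of $\Gamma_j\mathcal{F}_k + \Gamma_k\mathcal{F}_j$), the claim $[\mathcal{M}_1\widehat{\Gamma}(\xi)\widehat{\mathcal{V}}] = 0$ reduces to the single identity
\[
\Gamma_1[\widehat{\mathcal{V}}] = Y\widetilde{\Gamma}(\xi)[\overline{\mathcal{V}}].
\]

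The main obstacle is verifying this last identity, which is the linearized/Fourier shadow of the constraint jump condition $[\Gamma_1\mathcal{V}] = 0$ across the perturbed front. The plan is to apply $\Gamma_1$ on the left of \eqref{LRH}: the left side becomes $\mathcal{M}_1\Gamma_1[\widehat{\mathcal{V}}]$ by $\Gamma_1\mathcal{A}_1^\pm = \mathcal{M}_1\Gamma_1$, while the right side simplifies to $Y\mathcal{M}_1\widetilde{\Gamma}(\xi)[\overline{\mathcal{V}}]$ after using the background shock identities $[\Gamma_1\overline{\mathcal{V}}] = 0$ (Proposition \ref{shockprop}) and $[\mathcal{F}_1(\overline{\mathcal{V}})] = 0$ (stationary Rankine--Hugoniot), together with the algebraic relation $\Gamma_1\mathcal{F}_j(\mathcal{V}) = (\mathcal{M}_1\Gamma_j + \mathcal{M}_j\Gamma_1)\mathcal{V} - \Gamma_j\mathcal{F}_1(\mathcal{V})$ extracted from \eqref{alg}. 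Cancelling $\mathcal{M}_1$ is legitimate because the noncharacteristicity hypothesis \eqref{Anonchar}, combined with the spectral decomposition \eqref{specunion}, forces $0 \notin \sigma(\mathcal{M}_1|_{\Range\,\Gamma_1})$. The weak-sense ``hence'' then follows immediately: on $\R$ the only possible distributional obstruction to $(\lambda + \widehat{\mathcal{M}}(\xi))\widehat{\Gamma}(\xi)\widehat{\mathcal{V}} = 0$ is the Dirac mass $\mathcal{M}_1[\widehat{\Gamma}(\xi)\widehat{\mathcal{V}}]\delta_0$ produced by $\mathcal{M}_1\partial_{x_1}$ acting on the jump at $x_1 = 0$ of the piecewise function $\widehat{\Gamma}(\xi)\widehat{\mathcal{V}}$, and this vanishes by the first assertion of (ii).
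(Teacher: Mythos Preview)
Your proposal is correct and follows essentially the same route as the paper's proof: both arguments derive the linearized intertwining relations $\Gamma_1\mathcal{A}_1^\pm=\mathcal{M}_1\Gamma_1$, $\widetilde{\Gamma}\mathcal{A}_1^\pm+\Gamma_1\widetilde{\mathcal{A}}^\pm=\mathcal{M}_1\widetilde{\Gamma}+\widetilde{\mathcal{M}}\Gamma_1$ from Lemma~\ref{alglem}, use them to obtain part~(i) and the pointwise identity $\mathcal{M}_1\widehat{\Gamma}\widehat{\mathcal{V}}=-(\lambda+\widetilde{\mathcal{M}})\Gamma_1\widehat{\mathcal{V}}+\widetilde{\Gamma}\mathcal{A}_1\widehat{\mathcal{V}}$, and then close part~(ii) by applying $\Gamma_1$ to \eqref{LRH} (using $[\Gamma_1\overline{\mathcal{V}}]=0$, $[\mathcal{F}_1(\overline{\mathcal{V}})]=0$, and the algebraic relations \eqref{alg}) to get the key auxiliary identity $[\Gamma_1\widehat{\mathcal{V}}]=Y\widetilde{\Gamma}[\overline{\mathcal{V}}]$. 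The only difference is order of presentation: the paper records this auxiliary identity first as \eqref{rel4} and then carries out the jump computation, whereas you reduce to it at the end.
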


\begin{proof}
From \eqref{Linv}, \eqref{alg}, we obtain the corresponding relations
\be\label{rels}
	\Gamma_1 \mathcal{A}_1= \mathcal{M}_1\Gamma_1,\quad
\widetilde{ \Gamma} \widetilde A= \widetilde{\mathcal{M}} \widetilde{ \Gamma},
\quad
\widetilde{ \Gamma} A_1+ \Gamma_1\widetilde A= \widetilde{\mathcal{M}}\Gamma_1+ \mathcal{M}_1\widetilde{ \Gamma},
\ee
\ba\label{algFL}
	\widetilde {\Gamma} \widetilde{\mathcal{N}}&= \widetilde{\mathcal{M}}\widetilde{\Gamma}, 
	\quad \widetilde{\Gamma} \widetilde{\mathcal{Q}}= 0; \qquad
	\Gamma_1 \widetilde{\mathcal{N}} +\widetilde{\Gamma} \mathcal{N}_s1 = 
	\mathcal{M}_1\widetilde{\Gamma} +\widetilde{\mathcal{M}}\Gamma_1 , 
	\quad \Gamma_1 \widetilde{\mathcal{Q}} +  \widetilde{\Gamma} \mathcal{Q}_1 = 0.
\ea

Recall the jump condition \eqref{genRH} for the shock, specialized to the $x_1$-directional case $k=(1,0,\dots,0)$:
\be\label{1RH}
	[\mathcal{F}_1]=[\mathcal{N}_1 \overline{\mathcal{V}} + \mathcal{Q}_1(\overline{\mathcal{V}})]=0,
\ee
and the jump condition \eqref{Gammajump} for the constraint:
\be\label{1Gammajump}
	[\Gamma_1 \overline{ \mathcal{V}}]=0.
\ee
obtained by applying $\Gamma_1$ to \eqref{1RH} and using \eqref{alg} and $0\not \in \sigma( \mathcal{M}_1)$.
Similarly, applying $\tilde \Gamma$ to \eqref{1RH} and using \eqref{algFL}, we obtain
	$ 0= \tilde \Gamma [\mathcal{N}_1 \overline{\mathcal{V}} + \mathcal{Q}_1(\overline{\mathcal{V}})]
	= - \Gamma_1 [\widetilde {\mathcal{N}}\overline{\mathcal{V}} + \mathcal{Q}_1(\overline{\mathcal{V}})]
	+ [\widetilde{\mathcal{M}} \Gamma_1+ \mathcal{M}_1 \tilde \Gamma], $
or
\be\label{2Gammajump}
	\Gamma_1 [\widetilde {\mathcal{N}}\overline{\mathcal{V}} + \mathcal{Q}_1(\overline{\mathcal{V}})]=
	 (\widetilde{\mathcal{M}} \Gamma_1+ \mathcal{M}_1 \tilde \Gamma)[\overline{\mathcal{V}}].
\ee

Applying now $\Gamma_1$ to \eqref{LRH}, and using \eqref{rels}-\eqref{algFL} and \eqref{1Gammajump}, gives 
$$
\begin{aligned}
\mathcal{M}_1 [\Gamma_1 \widehat{\mathcal{V}}]&= \Gamma_1 [\mathcal{A}_1 \widehat{\mathcal{V}}] =
-Yi\xi \Gamma_1 [\widetilde{\mathcal{N}}\overline{\mathcal{V}} + \widetilde{\mathcal{Q}}(\overline{\mathcal{V}}]
= -Yi\xi (\widetilde{\mathcal{M}}\Gamma_1 + \mathcal{M}_1\tilde \Gamma) [\overline{\mathcal{V}}]
= -Yi\xi \mathcal{M}_1\tilde \Gamma [\overline{\mathcal{V}}],
\end{aligned}
$$
	yielding, by invertibility of $\mathcal{M}_1$ (a consequence of \eqref{Anonchar}, as noted above),
\be\label{rel4}
[\Gamma_1 \widehat{\mathcal{V}}]=  -Yi\xi \tilde \Gamma [\overline{\mathcal{V}}].
\ee

Writing \eqref{int} as
	$(\lambda + \mathcal{A}_1\partial_{x_1} + \widetilde{\mathcal{A}})\widehat{\mathcal{V}}=0$,
applying $\hat \Gamma$ on the left, and using \eqref{rels}, we obtain immediately
	$(\lambda + \widehat{\mathcal{M}})\hat \Gamma \widehat{\mathcal{V}}$, verifying (i).
Using \eqref{int} to express
$
\partial_{x_1} \widehat{\mathcal{V}}= \mathcal{A}_1^{-1}(\lambda + \widetilde{\mathcal{A}})\widehat{\mathcal{V}},
$
we obtain
$$
\mathcal{M}_1 [\hat \Gamma \widehat{\mathcal{V}}]:=
\mathcal{M}_1 [(\Gamma_1\partial_{x_1} +\tilde \Gamma) \widehat{\mathcal{V}}]=
\mathcal{M}_1 (-\Gamma-1\mathcal{A}_1^{-1}(\lambda +\widetilde {\mathcal{A}})) [\widehat{\mathcal{V}}],
$$
or, using $\mathcal{M}_1\Gamma_1=\Gamma_1 \mathcal{A}_1$ to express $\mathcal{M}_1\Gamma_1 \mathcal{A}_1^{-1}=\Gamma_1 $,
$
\mathcal{M}_1 [\hat \Gamma \widehat{\mathcal{V}}]=
\big(-\Gamma_1(\lambda +\widetilde {\mathcal{A}})
+\mathcal{M}_1 \widetilde{\Gamma} \big) [\widehat{\mathcal{V}}].
$
Using \eqref{rels} to express
$\mathcal{M}_1 \tilde \Gamma- \Gamma_1 \widetilde{\mathcal{A}}= \tilde \Gamma \mathcal{A}_1 -\widetilde {\mathcal{M}}$,
we obtain finally
$$
\mathcal{M}_1 [\hat \Gamma \widehat{\mathcal{V}}]=
-(\lambda +\widetilde {\mathcal{M}})[ \Gamma_1 \widehat{\mathcal{V}}]
+\tilde \Gamma [\mathcal{A}_1 \widehat{\mathcal{V}}],
$$
or, substituting for $ [ \Gamma_1 \widehat{\mathcal{V}}]$ using \eqref{1Gammajump},
and then applying \eqref{algFL}:
$$
\mathcal{M}_1 [\hat \Gamma \widehat{\mathcal{V}}]= 
Y[(\lambda +\widetilde {\mathcal{M}})\tilde \Gamma \overline{\mathcal{V}}]
+\tilde \Gamma [\mathcal{A}_1 \widehat{\mathcal{V}}]
= \tilde{\Gamma}\Big( Y  (\lambda [\overline{\mathcal{V}}]  +[\widetilde {\mathcal{F}}(\overline{\mathcal{V}})])
	+ [\mathcal{A}_1 \widehat{\mathcal{V}}] \Big), 
$$
which vanishes by \eqref{LRH}. 
By invertibility of $\mathcal{M}_1$, we thus obtain $[\widehat{\Gamma}\widehat{\mathcal{V}}]=0$,
verifying (ii).
\end{proof}

\begin{corollary}\label{sat}
Assuming \eqref{inv}, \eqref{Anonchar}, for $\widehat{\mathcal{V}}\in L^2$ piecewise smooth
and satisfying \eqref{int}-\eqref{LRH},
	we have $\widehat{\Gamma} \widehat{\mathcal{V}}\equiv 0$.
	That is, decaying normal modes automatically satisfy the 
	Laplace-Fourier transformed version of constraint \eqref{Gamma}.
\end{corollary}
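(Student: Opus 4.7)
The plan is to reduce the claim to the elementary fact that a first-order constant-coefficient linear ODE on $\mathbb{R}$ admits no nontrivial $L^2$ solution. Setting $\phi := \widehat{\Gamma}\widehat{\mathcal{V}}$, Lemma \ref{weaksoln} already furnishes on each side $x_1 \gtrless 0$ the interior ODE $(\lambda + \widehat{\mathcal{M}})\phi = 0$ together with the transmission condition $[\mathcal{M}_1 \phi] = 0$ at $x_1 = 0$. I would first observe that, as noted just after \eqref{Anonchar}, the noncharacteristicity hypothesis together with the spectral decomposition \eqref{specunion} makes $\mathcal{M}_1$ invertible on $\Range \, \Gamma_1$, so the transmission condition upgrades to $[\phi]=0$ and $\phi$ is continuous across the shock.

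Next I would verify that $\phi \in L^2(\mathbb{R})$. On each half-line the interior equation \eqref{int} permits the substitution $\partial_{x_1}\widehat{\mathcal{V}} = -\mathcal{A}_1^{-1}(\lambda + \widetilde{\mathcal{A}})\widehat{\mathcal{V}}$, whose pointwise size is controlled by that of $\widehat{\mathcal{V}}$, and is therefore in $L^2$. Consequently $\phi = \Gamma_1 \partial_{x_1}\widehat{\mathcal{V}} + \widetilde{\Gamma}\widehat{\mathcal{V}}$ lies in $L^2$ on each half-line, and the continuity from the previous step rules out any concentrated delta contribution at $x_1=0$; thus $\phi$ is a genuine element of $L^2(\mathbb{R})$ satisfying, in the classical sense, the ODE $\phi_{x_1} = B\phi$ with $B := -\mathcal{M}_1^{-1}(\lambda + \widetilde{\mathcal{M}}(\xi))$.

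To conclude, every solution of $\phi' = B\phi$ decomposes as a sum of terms of the form $e^{\mu x_1} p(x_1)$ with $\mu \in \sigma(B)$ and $p$ a polynomial. If $\mathrm{Re}(\mu) > 0$ such a term blows up as $x_1 \to +\infty$; if $\mathrm{Re}(\mu) < 0$ it blows up as $x_1 \to -\infty$; and if $\mathrm{Re}(\mu) = 0$ it fails to decay at either end. So no nontrivial linear combination is square-integrable on the full line, forcing $\phi \equiv 0$, as required.

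The main obstacle, and the step that requires any actual care, is the $L^2$-regularity check in the middle paragraph: making sure that when the one-sided pieces of $\widehat{\Gamma}\widehat{\mathcal{V}}$ are glued together they do not produce a hidden distributional jump at the shock front. This is precisely why the transmission identity $[\mathcal{M}_1 \phi]=0$ from Lemma \ref{weaksoln}(ii)---rather than merely the interior equation (i)---is essential, and where noncharacteristicity enters silently, through invertibility of $\mathcal{M}_1$. The remainder is a routine ODE uniqueness argument.
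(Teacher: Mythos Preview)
Your argument is correct and reaches the same conclusion as the paper, but by a slightly different and more elementary route. The paper invokes Lemma~\ref{weaksoln}(ii) to get $(\lambda+\widehat{\mathcal{M}})\widehat{\Gamma}\widehat{\mathcal{V}}=0$ \emph{weakly} on all of $\mathbb{R}$, then mollifies: each $\phi^\eps=\phi*\eta^\eps$ is a smooth $L^2$ solution of the same constant-coefficient equation, hence would be an $L^2$ eigenfunction of $-\widehat{\mathcal{M}}(\xi)$, which is impossible since constant-coefficient operators have only continuous spectrum; passing $\eps\to0$ gives $\phi\equiv0$. You instead use parts (i) and (ii) of Lemma~\ref{weaksoln} separately: the interior ODE on each half-line together with $[\mathcal{M}_1\phi]=0$ and invertibility of $\mathcal{M}_1$ yield continuity of $\phi$, hence a \emph{classical} solution of $\phi'=B\phi$ on the whole line, and you finish with the explicit structure $e^{\mu x_1}p(x_1)$ of such solutions. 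Your version avoids mollification and the language of spectral theory, at the cost of the extra step verifying $\phi\in L^2$ via $\partial_{x_1}\widehat{\mathcal{V}}=-\mathcal{A}_1^{-1}(\lambda+\widetilde{\mathcal{A}})\widehat{\mathcal{V}}$; the paper's version is a bit slicker in that it never needs to examine $\phi$ pointwise. Both are sound.
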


\begin{proof}
By Lemma \ref{weaksoln}, $\widehat{\vp}:=\hat{\Gamma}\widehat{\mathcal{V}}$ 
	is a weak solution of $(\lambda+ \widehat{\mathcal{M}}(\xi))\widehat{\vp}=0$.
Mollifying $\widehat{\vp}$ by convolution with a standard smoothing kernel $\eta^\eps$, 
we obtain a family of $C^\infty \cap L^2$ solutions 
	$\widehat{\vp}^\eps:=\widehat{\vp} * \eta^\eps$ of $(\lambda-\widehat{\mathcal{M}}(\xi))f=0$, 
converging in $L^2$ to $\vp$ as $\eps \to 0^+$.
These solutions, if nontrivial, would represent eigenfunctions of $-\widehat{\mathcal{M}}(\xi)$ 
with eigenvalue $\lambda$.  
However, $\mathcal{M}$, being constant coefficient, has only continuous spectrum, and so each $\widehat{\vp}^\eps$ must vanish identically, as therefore does $\widehat{\vp}$ in the limit as $\eps\to 0$.
\end{proof}

\begin{corollary}\label{normalMHD}
For stationary MHD shocks in the $x_1$ direction of the $\beta$-model \eqref{eq-2}, $\beta\neq 0$, 
and $\widehat{\mathcal{V}}\in L^2$ piecewise smooth and satisfying \eqref{int}-\eqref{LRH},
	we have $\tilde \Gamma\widehat{\mathcal{V}}= \beta (\partial_{x_1}\widehat{\mathcal{V}}_4 + i\xi  \widehat{\mathcal{V}}_5 )\equiv 0$.
	That is, decaying normal modes automatically satisfy the (transformed)
	divergence-free constraint.
\end{corollary}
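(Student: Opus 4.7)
The plan is to deduce this result as a direct specialization of Corollary \ref{sat}, which establishes in the general abstract framework that any decaying normal mode $\widehat{\mathcal{V}}\in L^2$ satisfying \eqref{int}--\eqref{LRH} must obey $\widehat{\Gamma}\widehat{\mathcal{V}}\equiv 0$. Thus the main work is simply to verify that the two standing hypotheses of that corollary --- the involution property \eqref{inv} and the noncharacteristicity condition \eqref{Anonchar} --- hold for stationary MHD shocks in the $\beta$-model with $\beta\neq 0$, and then to read off the conclusion using the explicit forms of $\Gamma_1$ and $\Gamma_2$ recorded in Remark \ref{rmkvalues}.

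For the first hypothesis, the involution property \eqref{inv} for the $\beta$-model is precisely what was used to establish Corollary \ref{bpersist}; one checks algebraically (or reuses the computation sitting behind that corollary) that with $\Gamma_1=(0,0,0,1,0)$, $\Gamma_2=(0,0,0,0,1)$, $\mathcal{M}_1=\beta$, $\mathcal{M}_2=0$, the identities \eqref{alg} of Lemma \ref{alglem} hold for the flux components $f_1,f_2$ displayed in \eqref{f's}. For the second hypothesis, we are assuming throughout that we are working with a stationary Lax shock, which by definition is noncharacteristic with respect to the linearized constrained system; translated back through \eqref{specunion}, this yields $\det\mathcal{A}_1(\mathcal{V}_\pm)\neq 0$, i.e. \eqref{Anonchar}. (The complementary assumption $0\notin\sigma(\mathcal{M}_1|_{\mathrm{Range}\,\Gamma_1})=\{\beta\}$, which is the only place $\beta\neq 0$ is genuinely needed, is built directly into the hypothesis of the corollary.)

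With both hypotheses in hand, Corollary \ref{sat} applies and yields $\widehat{\Gamma}(\xi)\widehat{\mathcal{V}}\equiv 0$. Using $\widehat{\Gamma}(\xi)=\Gamma_1\partial_{x_1}+\widetilde{\Gamma}(\xi)$ together with the explicit values of $\Gamma_1,\Gamma_2$, this reads
\[
\widehat{\Gamma}(\xi)\widehat{\mathcal{V}}=\partial_{x_1}\widehat{\mathcal{V}}_4+i\xi\,\widehat{\mathcal{V}}_5\equiv 0,
\]
which is exactly the Fourier--Laplace transform of the divergence-free constraint $\mathrm{div}(h)=0$. Multiplying by the nonzero scalar $\beta=\mathcal{M}_1$ gives the statement as written.

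Since the conceptual content has been fully absorbed into Corollary \ref{sat} and Remark \ref{rmkvalues}, there is no genuine obstacle left: the only thing to watch is the bookkeeping step of confirming that the linearized jump condition \eqref{LRH} is indeed applied to shocks meeting \eqref{Anonchar} (so that the hypothesis $s\notin\sigma(\mathcal{M}(k)|_{\mathrm{Range}\,\Gamma})$ used in Lemma \ref{weaksoln} holds here with $s=0$ and $k=e_1$, which reduces to $\beta\neq 0$). Everything else is mechanical substitution.
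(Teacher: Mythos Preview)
Your proposal is correct and matches the paper's approach: the paper states Corollary \ref{normalMHD} without a separate proof, treating it as an immediate specialization of the abstract Corollary \ref{sat} with the explicit data from Remark \ref{rmkvalues}. Your verification of the hypotheses \eqref{inv} and \eqref{Anonchar} (the latter via $\beta\neq 0$) and the subsequent unpacking of $\widehat{\Gamma}\widehat{\mathcal{V}}=\partial_{x_1}\widehat{\mathcal{V}}_4+i\xi\widehat{\mathcal{V}}_5$ is exactly what is intended.
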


\br \label{normrmk}
Relation $(\lambda + \widetilde{ \mathcal{M}} \tilde \Gamma )\widetilde {\mathcal{V}}=0$
may be recognized as the Laplace-Fourier transformed version of $(\partial_t + \mathcal{M})\Gamma \mathcal{V}=0$
of Corollary \ref{persistence}. For the hybrid model of \cite{BT2,MeZ2,FT}, weak
satisfaction of the Fourier-Laplace transform analog
$$
(\lambda + \partial_{x_1}\bar u_1 + i\xi \bar u_2) (\partial_x \tilde h_1+i\xi \tilde h_2)=0
$$
of \ref{weakMeZ}, Remark \ref{partialver}, pointed out in \cite[(47)-(48), Remark 3.2]{FT},
likewise gives the result that decaying normal modes satisfy the divergence-free constraint,
a fundamental result first noted in \cite{BT1,BT2}.
\er 

\br \label{nodecayrmk}
For both the $\beta$-model and the standard (hybrid) MHD model of \cite{BT2,MeZ2,FT}, 
the fact that the mode of propagation of the constraint is scalar convection/advection shows
that not all decaying modes of $(\mathcal{A}_1\partial_{x_1}+ (\lambda + i\xi \mathcal{A}_2))
\widetilde{\mathcal{V}}=0$ satisfy the constraint $\tilde \Gamma \widetilde {\mathcal{V}}=0$.
For, inverting the dispersion relation $\lambda(k_1, \xi)= ik_1 \bar u_1 + i\xi \bar u_2$
to solve for the associated eigenvalue $\mu:=ik=(\lambda -i\xi\bar u_2)/\bar u_1 $
of $\mathcal{A}_1^{-1}(\lambda + i\xi \mathcal{A}_2)$, we see for $\xi=0$ that $\mathrm{Re}(\mu)=\mathrm{Re}(\lambda)$,
and so the mode carrying the constraint is decaying as $x\to -\infty$, but does not satisfy the constraint.
Thus, the situation of Corollary \ref{normalMHD}
is more subtle than just checking that the constraint is satisfied for all relevant modes.
\er

\subsubsection{Weak Lopatinsky stability}\label{betastab}
Gathering information, we are now ready to make conclusions regarding linearized and nonlinear stability
with and without constraints.
We first recall the {\it weak stability}, or ``weak Lopatinsky'' condition of Majda \cite{Majda},
that (i) the dimensions of the subspaces $\mathcal{E}^\pm$
of decaying modes of \eqref{int} on $x\gtrless 0$ sum to $n-1$
(consistent splitting \cite{AGJ,ZS}) and (ii) for $\mathrm{Re}(\lambda) >0$, there exist no 
decaying normal modes $\widehat{\mathcal{V}}\in L^2$ piecewise smooth and satisfying \eqref{int}-\eqref{LRH}.
Condition (i) asserts effectively that essential spectrum of the linearized evolution operator 
lies in $\mathrm{Re} (\lambda) \leq 0$, condition (ii) that point spectrum lies in $\mathrm{Re}(\lambda) \leq 0$; see
\cite{He} for further discussion.
Failure of weak Lopatinsky stability, also known as {\it strong instability},
implies exponential linear instability.

\begin{corollary}\label{weakstab}
Assuming involution property \eqref{inv}, a Lax type shock solution 
\eqref{genshock} of \eqref{eqs} in direction $x_1$ with speed 
$s\not \in \sigma(\mathcal{M}_1|_{\Range \, \Gamma_1})$
satisfies the weak Lopatinsky condition with constraint \eqref{Gamma} if and only if it 
satisfies the condition without the constraint.
\end{corollary}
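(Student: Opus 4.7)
The plan is to reduce both parts of the weak Lopatinsky equivalence to Corollary \ref{sat}. First, I observe that the hypothesis $s \not\in \sigma(\mathcal{M}_1|_{\Range \, \Gamma_1})$, combined with Lax-type hyperbolicity of the constrained system and the spectral splitting \eqref{specunion} of Proposition \ref{hypprop}, forces the noncharacteristicity condition \eqref{Anonchar}; hence Corollary \ref{sat} is applicable. That corollary asserts that every piecewise-smooth $\widehat{\mathcal{V}} \in L^2$ satisfying \eqref{int}-\eqref{LRH} automatically obeys the Fourier--Laplace transformed constraint $\widehat{\Gamma}\widehat{\mathcal{V}} \equiv 0$, regardless of the sign of $\mathrm{Re}(\lambda)$, since $\mathcal{M}$ being constant-coefficient hyperbolic has only continuous spectrum and thus no $L^2$ eigenfunctions.

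For part (ii) of the weak Lopatinsky condition, absence of nontrivial $L^2$-decaying solutions of \eqref{int}-\eqref{LRH} for $\mathrm{Re}(\lambda) > 0$, the ``with-constraint'' admissible set is trivially contained in the ``without-constraint'' set, so the latter stability implies the former. Corollary \ref{sat} provides the reverse inclusion: any candidate unstable mode for the unconstrained problem automatically lies in $\ker \widehat{\Gamma}$ and so is also a candidate unstable mode for the constrained problem. Consequently, nonexistence on one side of the equivalence matches nonexistence on the other.

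For part (i), the consistent splitting $\dim \mathcal{E}^- + \dim \mathcal{E}^+ = n-1$, I would apply Proposition \ref{hypprop} at each complex frequency $(\lambda, \xi)$ to decompose the spectrum of the ODE symbol $-\mathcal{A}_1^{-1}(\lambda + \widetilde{\mathcal{A}})$ governing \eqref{int} into an ``in-$\ker \widehat{\Gamma}$'' family and an ``$\mathcal{M}$-propagation'' family. By hyperbolicity of $\mathcal{M}$ and the invertibility of $\mathcal{M}_1|_{\Range \, \Gamma_1}$, for $\mathrm{Re}(\lambda) > 0$ the constraint-propagation family furnishes a fixed number (equal to $m$, the codimension of $\ker \Gamma$) of modes strictly off the imaginary axis, contributing the same definite count to $\mathcal{E}^\pm$ on each side independent of the background state $\overline{\mathcal{V}}_\pm$. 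Together with Proposition \ref{shockprop}, which guarantees automatic weak satisfaction of the constraint jump $[\Gamma_1 \overline{\mathcal{V}}] = 0$, this lets one match the dimension budget for consistent splitting between the two formulations.

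The main obstacle I anticipate is the bookkeeping in part (i): one must carefully balance the $m$ constraint-mode dimensions that are removed from $\mathcal{E}^\pm$ against the $m$ jump conditions that become redundant when the constraint is adjoined, to conclude that the sum-to-$(n-1)$ condition persists under either formulation. Part (ii), by contrast, is essentially immediate from Corollary \ref{sat} once the noncharacteristicity reduction is established.
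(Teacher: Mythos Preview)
Your treatment of part (ii) is correct and matches the paper: Corollary~\ref{sat} does exactly this work, and your observation that noncharacteristicity \eqref{Anonchar} follows from the Lax assumption on the constrained system together with $s\notin\sigma(\mathcal{M}_1|_{\Range\,\Gamma_1})$ via the spectral union \eqref{specunion} is right.

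For part (i), however, you are working harder than necessary and have not closed the argument. The paper does \emph{not} attempt the mode-by-mode spectral bookkeeping you outline. Instead it invokes Hersh's Lemma: for a weakly hyperbolic system (which the unconstrained system is, by Proposition~\ref{hypprop}(i)), no eigenvalue of $-\mathcal{A}_1^{-1}(\lambda+\sum_{j\neq 1}i\xi_j\mathcal{A}_j)$ can be pure imaginary when $\mathrm{Re}(\lambda)>0$, so the stable/unstable dimension counts are constant in $(\lambda,\xi)$ and agree with the one-dimensional case $\xi=0$. But at $\xi=0$, consistent splitting is exactly the Lax-type condition, and Proposition~\ref{shockprop} already establishes that shock type is the same with or without the constraint. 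This dispatches condition (i) in two lines with no dimension balancing.

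Your proposed route---splitting off the $m$ constraint-propagation modes and arguing they contribute identically on both sides---is morally the content of Proposition~\ref{shockprop}, but extending it from $\xi=0$ to general $(\lambda,\xi)$ by direct decomposition would require you to track the $\ker L$/$\Range L^*$ splitting of Corollary~\ref{Ginv} and verify transversality uniformly, which is more than you need. The Hersh reduction avoids all of this by freezing the count at a single frequency.
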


\begin{proof} 
	In the one-dimensional case $\xi=0$, condition (i) reduces to
	the requirement that the shock be Lax type.
Since, by Proposition\ref{shockprop}, the type of the shock is the same with or without
constraint, this verifies the result in dimension one.
But, by Hersh's Lemma (cf. \cite{Hersh}, or \cite[Chap. 4, Lemma 4.1]{benzoni2007multi}),	
satisfaction of this condition in multi-dimensions is equivalent for weakly hyperbolic systems
to satisfaction in dimension one, whence by Proposition \ref{hypprop} the result follows also in multi-d.

Namely, one observes that existence of a pure imaginary eigenvalue $\mu=ik_1$ of 
$
	-\mathcal{A}_1^{-1} (\lambda + i\sum_{j\neq 1}\xi_j \mathcal{A}_j)
	$
with associated eigenvector $\mathcal{R}$, implies
$-\mathcal{A}_1^{-1} (\lambda + i\sum_{j\neq 1}\xi_j \mathcal{A}_j) \mathcal{R}= ik_1 \mathcal{R},$
and thus $(ik_1 A_1 + \sum_{j\neq 1}i\xi_j \mathcal{A}_j)\mathcal{R}=-\lambda \mathcal{R}$, yielding
$\lambda$ real by weak hyperbolicity, hence contradicting $\mathrm{Re}(\lambda) >0$.
But this implies that no eigenvalues cross the imaginary axis as $\xi$ and $\lambda$ are
varied within $\xi\in \R^{d-1}$, $\mathrm{Re}(\lambda)>0$, and so the number of positive/negative real
part eigenvalues remains constant, in particular equal to that in the dimension one case
$\xi=0$.
Combining our conclusions in one- and multi-d, we find that condition (i) holds automatically,
with or without constraint.
But, by Corollary \ref{sat}, condition (i) holds without constraint if and only if it holds with constraint.
\end{proof}

\begin{corollary}\label{weakstabMHD}
For stationary MHD shocks in the $x_1$ direction of {\emph either} 
the $\beta$-model \eqref{eq-2}, $\beta\neq 0$, 
{\emph or of the standard (hybrid) model of \cite{BT2,MeZ2,FT}},
weak Lopatinsky stability holds with the divergence free constraint if and only if it holds
without the constraint.
\end{corollary}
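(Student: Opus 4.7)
The plan is to deduce both statements from the abstract equivalence of Corollary \ref{weakstab}, so I need to verify its hypotheses for each of the two models. Examining the proof of Corollary \ref{weakstab}, it rests on three ingredients: (a) the shock is of Lax type with the same incoming/outgoing counts with or without constraint; (b) decaying $L^2$ normal modes of \eqref{int}-\eqref{LRH} automatically satisfy the Fourier-Laplace transformed constraint; and (c) the system is weakly hyperbolic, so that Hersh's lemma reduces the multi-dimensional consistent splitting condition to its one-dimensional form, which in turn follows from Lax type. In addition, the noncharacteristicity hypothesis $s\notin\sigma(\mathcal{M}_1|_{\Range\,\Gamma_1})$ must be verified at the stationary shock speed $s=0$.

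For the $\beta$-model with $\beta\neq 0$, every one of these items is already in place. From Remark \ref{rmkvalues} we have $\mathcal{M}_1=\beta$, so $\sigma(\mathcal{M}_1|_{\Range\,\Gamma_1})=\{\beta\}$ and the condition $0\notin\sigma(\mathcal{M}_1|_{\Range\,\Gamma_1})$ is equivalent to $\beta\neq 0$. Ingredient (a) is Corollary \ref{shockMHD}, (b) is Corollary \ref{normalMHD}, and (c) is Proposition \ref{betahyp}. Corollary \ref{weakstab} therefore applies verbatim, yielding the claimed equivalence for the $\beta$-model.

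For the hybrid model of \cite{BT2,MeZ2,FT}, the formulation is symmetrizable but not conservative, so Corollary \ref{weakstab} does not apply as stated. I would instead re-run its proof, substituting parallel results from the literature for each ingredient. Here the propagation operator for the constraint is $\mathcal{M}=\bar u_1\partial_{x_1}+\bar u_2\partial_{x_2}$, so $\sigma(\mathcal{M}_1|_{\Range\,\Gamma_1})=\{\bar u_1^\pm\}$ and the noncharacteristicity hypothesis $s=0\notin\sigma(\mathcal{M}_1|_{\Range\,\Gamma_1})$ becomes $\bar u_1^\pm\neq 0$, which is the standard noncharacteristic MHD condition and is automatic for admissible shocks. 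Weak hyperbolicity holds because the hybrid model is symmetrizable hyperbolic; Lax type and its invariance under removal of the constraint follow as in Proposition \ref{shockprop} (they can also be verified by direct jump-condition bookkeeping); and the analog of Corollary \ref{normalMHD} is precisely the content of \cite[Remark 3.2 and (47)-(48)]{FT} (see also Remark \ref{normrmk}), based on weak satisfaction of the scalar transport equation $(\lambda+\partial_{x_1}\bar u_1+i\xi\bar u_2)(\partial_{x_1}\tilde h_1+i\xi\tilde h_2)=0$. Once these pieces are assembled, the proof of Corollary \ref{weakstab} transfers to the hybrid model word for word.

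The only real obstacle is bookkeeping: isolating from the proof of Corollary \ref{weakstab} the precise abstract inputs (weak hyperbolicity, Lax-type invariance, and automatic constraint-satisfaction of decaying modes) and confirming that each is available in the hybrid setting without invoking the conservative form of \eqref{eqs}. No new analytic work is required beyond what has already been established in Sections \ref{s:hcons1}-\ref{sec:persistency_H} and in \cite{BT1,BT2,MeZ2,FT}.
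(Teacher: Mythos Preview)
Your proposal is correct and follows essentially the same approach as the paper: for the $\beta$-model you apply Corollary \ref{weakstab} directly after checking its hypotheses, and for the hybrid model you re-run the same argument after verifying that each ingredient (weak hyperbolicity, Lax-type invariance, automatic constraint satisfaction of decaying normal modes, and noncharacteristicity $s\notin\sigma(\mathcal{M}_1|_{\Range\,\Gamma_1})$) is available there as well. The paper's proof is simply a two-sentence version of this, asserting that ``all relevant properties hold for the hybrid model as well''; your write-up makes the bookkeeping explicit, which is a welcome clarification but not a different route.
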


\begin{proof}
The result for the $\beta$-model follows by Corollary \ref{weakstab}.  The result for 
the hybrid model follows by the same argument, noting that all relevant properties hold for the hybrid model as well.
\end{proof}

A convenient way to test for weak Lopatinsky stability is via the {\it Lopatinsky determinant}
 $$
 \Delta(\lambda, \xi): =det(A_1^+\mathscr{E}^+, A_1^-\mathscr{E}^-, \lambda[f_0] + \sum_{j\neq 2} i\xi_j[f_j])\Big|_{x_1=0},
 $$
 defined for all $\xi\in \R^{d-2}$, $\mathrm{Re}(\lambda) >0$, where
$\mathscr{E}^{\pm}(\lambda, \xi)$ denote manifolds of (spatially) decaying  solutions of \eqref{int}
on $x_1 \gtrless 0 $; see \cite{MeZ2,ZS}, or Section \ref{sec:lop_det_constr} and \eqref{lopatinski_determinant} 
for further discussion.
Evidently, vanishing of the determinant $\Delta$ is equivalent to existence of a decaying normal mode,
or solution of \eqref{int}--\eqref{LRH}; hence, nonvanishing of the Lopatinsky determinant on $\mathrm{Re}(\lambda)>0$
is equivalent to weak Lopatinsky stability, a condition that is useful both numerically and analytically.
In the present context, by Corollary \ref{weakstabMHD}, one may test weak Lopatinsky stability using
either the hybrid model of \cite{BT2,MeZ2,FT} or, as we do here, the $\beta$-model, both {\it without}
imposition of the divergence-free constraint.
The advantage of the $\beta$ model is that it is of a form determined by a single set of equations,
so treatable by existing, well-tested code: for example, the
numerical stability package STABLAB \cite{STABLAB}, applicable to general models \eqref{eq-2:abstract_form}.

\subsubsection{Uniform Lopatinsky stability}\label{ubetastab}
We next recall the {\it uniform Lopatinsky condition} of Majda \cite{Majda}, which requires, further, that
for $\mathrm{Re}(\lambda)>0$ and  $|\lambda, \xi|=1$, the Lopatinsky determinant is uniformly bounded,
$|\Delta(\lambda, \xi)|\geq c>0$.
Under various additional structural conditions on \eqref{eqs} (cf. \cite{MeZ2}), 
the subspaces $\mathcal{E}^\pm$ of decaying modes of \eqref{int} on $x\gtrless 0$, 
hence also the Lopatinsky determinant $\Delta$, may be extended 
continuously to the closure $\mathrm{Re}(\lambda) \geq 0$, in which case uniform Lopatinsky stability is 
equivalent to nonvanishing of $\Delta$ on $\mathrm{Re}(\lambda)\geq 0$ except at the origin $(\lambda, \xi)=(0,0)$.
Under additional structural conditions on \eqref{eqs}, one may conclude further, by the symmetrizer
method of Kreiss, that uniform Lopatinsky stability implies uniform resolvent bounds on nearby perturbations of the
stationary shock, yielding also {\it nonlinear stability}, defined as well-posedness of the associated time-evolution
problem \cite{Majda,MeZ2}.

For MHD, such conditions were verified for the standard (hybrid) model in \cite{MeZ2}, yielding the following
{\it sufficient condition} for nonlinear stability.

\begin{proposition}[\cite{MeZ2}]\label{MeZprop}
	For shock waves of MHD, uniform Lopatinsky stability for the linearization of the hybrid model of
	\cite{BT2,MeZ2,FT}, ignoring the divergence-free constraint, yields nonlinear stability
	of the the perturbation equations about a Lax-type shock, with or without the constraint.
\end{proposition}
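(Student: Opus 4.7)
The plan is to reduce both halves of the assertion---nonlinear stability with and without imposing the divergence-free constraint on the perturbation---to the main nonlinear stability theorem of \cite{MeZ2}, using the structural framework developed in Section \ref{mathematical_settings}. The ``with constraint'' case is essentially immediate: the uniform Lopatinsky stability hypothesis for the hybrid model (ignoring the constraint) is precisely what \cite{MeZ2} assumes, and their Kreiss-symmetrizer / parabolic smoothing construction, applied to the symmetric hyperbolic--parabolic hybrid formulation, produces uniform resolvent bounds up to the imaginary axis that close the Duhamel fixed-point argument for perturbations satisfying $\mathrm{div}(h) = 0$.

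For the ``without constraint'' case, I would decompose any $L^\infty$ perturbation $\mathcal{V}$ as $\mathcal{V} = \mathcal{V}_c + \mathcal{V}_d$ with $\Gamma \mathcal{V}_c \equiv 0$ and $\mathcal{V}_d$ supported in the constraint-transverse sector. The involution property \eqref{inv} together with Corollaries \ref{persistence} and \ref{bpersist} shows that $\varphi := \Gamma \mathcal{V}$ satisfies the constant-coefficient scalar equation $(\partial_t - \mathcal{M})\varphi = 0$ weakly; for MHD this is a single linear advection with constant speed ($\bar u_1$ for the hybrid model), and so is $L^p$-stable for every $p$ by elementary means and carries its own trivial jump condition across $x_1=0$. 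The constraint-respecting component $\mathcal{V}_c$ is then handled by \cite{MeZ2} applied verbatim, while the deviation $\mathcal{V}_d$ is controlled by the explicit transport estimate for $\varphi$, so that the two contributions combine to give nonlinear control of $\mathcal{V}$ without requiring $\mathrm{div}(h)|_{t=0} = 0$.

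The main obstacle I anticipate is compatibility of the splitting $\mathcal{V} = \mathcal{V}_c + \mathcal{V}_d$ with the linearized jump conditions \eqref{LRH} across the shock. The Laplace--Fourier analysis of Section \ref{sec:persistency_H}, and in particular Lemma \ref{weaksoln} and Corollary \ref{sat}, shows that decaying normal modes automatically satisfy the transformed constraint once uniform Lopatinsky holds; the work to be done is to upgrade this modal dichotomy to a space-time projection onto the constraint-respecting subspace that commutes with the jump conditions on both sides of $x_1 = 0$ and with the parabolic regularization in the interior. Once that projection is constructed, the MeZ2 estimates apply to its kernel while the scalar transport estimate applies to its range, and the two sectors decouple at the linear level so the nonlinear iteration closes in the standard way.
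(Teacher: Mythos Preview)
The paper gives no proof of this proposition; it is stated as a citation of \cite{MeZ2}, the preceding sentence noting that \cite{MeZ2} verified the required structural conditions for the hybrid model so that the Kreiss--Majda symmetrizer machinery applies directly.

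Your proposal reverses which case is immediate and which requires argument. The hybrid model \eqref{MeZ2sys} is symmetrizable hyperbolic as a standalone system \emph{without} imposing $\mathrm{div}(h)=0$; that is the entire point of passing to the hybrid formulation. Hence the nonlinear stability theory of \cite{MeZ2}---Kreiss symmetrizers, resolvent bounds, closure of the nonlinear iteration---applies directly to the unconstrained hybrid system, and the ``without constraint'' conclusion is the one that comes for free. The ``with constraint'' conclusion then follows because solutions of the hybrid model with divergence-free initial data remain divergence-free (persistence, as in Remark~\ref{partialver} or \cite[Proposition 7.1]{MeZ2}), so a solution of the unconstrained hybrid problem launched from constrained data is automatically a solution of the constrained MHD problem.

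Your decomposition $\mathcal{V}=\mathcal{V}_c+\mathcal{V}_d$ and the search for a projection compatible with the jump conditions are therefore unnecessary here; they would be relevant only if one were trying to deduce unconstrained stability \emph{from} constrained stability, but the implication runs the other way. Two smaller points: the phrase ``parabolic smoothing'' is out of place, since this is the inviscid Lopatinsky setting and the argument is purely hyperbolic; and Lemma~\ref{weaksoln} / Corollary~\ref{sat} concern linear normal modes, so invoking them for the nonlinear closure would in any case require additional justification.
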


Proposition \ref{MeZprop} does {\it not} assert that uniform Lopatinsky stability with the constraint
is equivalent to stability without the constraint.  And, indeed, Corollary \ref{sat}, though still true
for $\mathrm{Re}(\lambda)=0$, is not 
much use, since the continuous extension to $\mathrm{Re}(\lambda)=0$ of decaying
subspaces $\mathcal{E}^\pm$ are no longer necessarily decaying, but may contain neutral, oscillatory modes.
Thus, vanishing of the Lopatinsky determinant implies only a solution of \eqref{LRH} belonging to 
special subspaces at plus and minus spatial infinity, and not existence of a decaying normal mode.

However, we can remedy this with a bit of further investigation.  Following \cite{MeZ2}, define
$(i\tau, \xi_0)$, $\xi_0), \tau\in \R$, to be a {\it nonglancing point} for \eqref{int} if the stable 
eigenvalues $\mu^\pm(\lambda, \xi)$ of \eqref{int} and the unstable eigenvalues $\nu^\pm (\lambda,\xi)$
have distinct limits as $(\lambda, \xi)\to (i\tau,\xi_0)$ from $\{\mathrm{Re}(\lambda)>0\}$;
otherwise, call it a {\it glancing point} for \eqref{int}.
(This simplified definition may be seen to be equivalent to the notions of glancing defined in \cite{MeZ2} 
for the cases considered there, including MHD.)
Call $(i\tau, \xi_0)$, $\xi_0), \tau\in \R$, a {\it point of continuity} for \eqref{int} 
if the associated stable subspaces $\mathcal{E}^\pm$ have limits
as $(\lambda,\xi)\to (i\tau,\xi_0)$ from $\{\mathrm{Re}(\lambda) >0\}$.
At a point of continuity, call $\widehat{\mathcal{V}}$, piecewise smooth, an {\it extended normal mode}
if it satisfies \eqref{int}-\eqref{LRH}, and approaches $\lim_{(\lambda, \xi)\to (i\tau+0^+, \xi)} \mathcal{E}^\pm$ as $x_1\to \pm \infty$.

\begin{proposition}\label{glanceprop}
Let $(i\tau, \xi_0)$, $\tau, \xi_0\in \R$, be a point of continuity for \eqref{int}, and a nonglancing point for 
	\be\label{Mint}
\lambda \hat{\phi} + \mathcal{M}_1^{\pm} \hat{\phi}_z + \sum_{j= 2}^d i\xi_j \mathcal{M}_j^{\pm} 
	\hat{\phi}=0, \qquad x_1\gtrless 0.
	\ee
Then, assuming \eqref{inv}, \eqref{Anonchar}, for an extended normal mode $\widehat{\mathcal{V}}$,
we have $\widehat{\Gamma} \widehat{\mathcal{V}}\equiv 0$.
That is, extended normal modes automatically satisfy the 
	Laplace-Fourier transformed version of constraint \eqref{Gamma}.
\end{proposition}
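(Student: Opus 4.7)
The plan is to adapt the argument of Corollary \ref{sat} to the boundary point $(i\tau,\xi_0)$. Setting $\widehat{\vp}:=\widehat{\Gamma}\widehat{\mathcal{V}}$, observe that the proof of Lemma \ref{weaksoln} uses neither $\mathrm{Re}(\lambda)>0$ nor $\widehat{\mathcal{V}}\in L^2$, only \eqref{inv}, \eqref{Anonchar}, and \eqref{int}--\eqref{LRH}; it therefore yields $(i\tau + \widehat{\mathcal{M}}(\xi_0))\widehat{\vp}=0$ weakly on $\mathbb{R}$ in the present setting. The main obstacle is that extended normal modes need not be $L^2$: the limiting subspaces $\lim \mathcal{E}^\pm$ may contain purely oscillatory modes of \eqref{int}, and the mollification/spectral argument of Corollary \ref{sat} requires $L^2$ input. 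The key point will be that nonglancing of \eqref{Mint} at $(i\tau,\xi_0)$ forces $\widehat{\vp}$ to be in $L^2$ regardless of the possibly neutral behavior of $\widehat{\mathcal{V}}$.

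To this end, observe that for $(\lambda,\xi)$ with $\mathrm{Re}(\lambda)>0$, applying the first-order operator $\widehat{\Gamma}$ to an exponentially decaying solution of \eqref{int} yields an exponentially decaying solution of \eqref{Mint}; hence $\widehat{\Gamma}$ maps $\mathcal{E}^\pm(\lambda,\xi)$ into the stable subspace $\mathcal{E}^\pm_{\mathcal{M}}(\lambda,\xi)$ of the constant-coefficient $\mathcal{M}$-ODE. By the continuity hypothesis on \eqref{int}, the subspaces $\mathcal{E}^\pm$ extend continuously to $(i\tau,\xi_0)$ as $\lim\mathcal{E}^\pm$; by the nonglancing hypothesis on \eqref{Mint}, the stable and unstable eigenvalues of the $\mathcal{M}$-ODE have distinct limits there, so $\mathcal{E}^\pm_{\mathcal{M}}$ also extends continuously, and crucially its limit $\mathcal{E}^\pm_{\mathcal{M}}(i\tau,\xi_0)$ still consists of \emph{genuinely exponentially decaying} solutions of the $\mathcal{M}$-ODE. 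Passing to the limit in the inclusion $\widehat{\Gamma}\mathcal{E}^\pm \subset \mathcal{E}^\pm_{\mathcal{M}}$ gives $\widehat{\Gamma}(\lim \mathcal{E}^\pm) \subset \mathcal{E}^\pm_{\mathcal{M}}(i\tau,\xi_0)$. Since the extended normal mode satisfies $\widehat{\mathcal{V}}(x_1) \in \lim \mathcal{E}^\pm$ as $x_1\to\pm\infty$, it follows that $\widehat{\vp}(x_1)$ decays exponentially at both ends.

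Hence $\widehat{\vp}$ is a piecewise smooth $L^2(\mathbb{R})$ weak solution of the constant-coefficient system $(i\tau + \widehat{\mathcal{M}})\widehat{\vp}=0$. Mollifying in $x_1$ exactly as in the proof of Corollary \ref{sat} produces smooth $L^2$ solutions $\widehat{\vp}^{\,\eps}=\widehat{\vp}*\eta^\eps$ of the equation in the strong sense; Fourier transforming in $x_1$ and using hyperbolicity of $\mathcal{M}$—which ensures that the symbol $i(\tau + k_1 \mathcal{M}_1 + \sum_{j\geq 2}\xi_{0,j}\mathcal{M}_j)$ is singular only on a finite (hence null) set of $k_1\in\mathbb{R}$—forces $\widehat{\vp}^{\,\eps}\equiv 0$, and then $\widehat{\vp}=0$ on letting $\eps\to 0^+$. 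The principal subtlety is the continuity/nonglancing step: without the nonglancing assumption on \eqref{Mint}, $\lim \mathcal{E}^\pm_{\mathcal{M}}$ could strictly contain the genuine stable subspace and include neutral modes, so that $\widehat{\vp}$ need not be $L^2$ and the final spectral argument would break down; this is precisely why the hypothesis is placed on \eqref{Mint} rather than on \eqref{int}.
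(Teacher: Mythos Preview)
There is a genuine gap in your argument: the assertion that nonglancing of \eqref{Mint} at $(i\tau,\xi_0)$ forces the limiting stable subspace $\mathcal{E}^\pm_{\mathcal{M}}(i\tau,\xi_0)$ to consist of \emph{genuinely exponentially decaying} solutions is false. Nonglancing means only that the limits of the stable and unstable eigenvalues of the $\mathcal{M}$-ODE are distinct; it does not prevent a stable eigenvalue from becoming purely imaginary in the limit, so long as no unstable eigenvalue converges to the same value. In the MHD $\beta$-model one has $m=1$, $\mathcal{M}_1=\beta$, $\mathcal{M}_2=0$, and the single eigenvalue of $H(\lambda,\xi)=-\mathcal{M}_1^{-1}(\lambda+\sum_{j\ge 2}i\xi_j\mathcal{M}_j)$ is $-\lambda/\beta$, which is stable for $\mathrm{Re}(\lambda)>0$ but purely imaginary at $\lambda=i\tau$. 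Since there are no unstable eigenvalues at all, the nonglancing hypothesis is satisfied vacuously, yet the limiting ``stable'' subspace is the full line spanned by the neutral mode $e^{-i\tau x_1/\beta}$, which is not in $L^2$. Your mollification/Fourier step therefore fails: a bounded oscillatory $\widehat{\vp}$ has distributional Fourier transform a point mass on the (null) singular set of the symbol, which is not zero.

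The paper's proof exploits nonglancing differently. From Lemma \ref{weaksoln} one still gets $(i\tau+\widehat{\mathcal{M}})\widehat{\vp}=0$ weakly together with the jump condition $[\mathcal{M}_1\widehat{\vp}]=0$, hence $\widehat{\vp}(0^+)=\widehat{\vp}(0^-)$. The limit argument places $\widehat{\vp}$ in the continuously extended stable subspaces $e_\pm$ of \eqref{Mint} on $x_1\gtrless 0$. The key observation you are missing is that, because the $\mathcal{M}_j$ are \emph{constant}, the ODE \eqref{Mint} is identical on both sides of the shock, so $e_-=f_+$ (the stable subspace at $-\infty$ is exactly the unstable subspace at $+\infty$). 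Then nonglancing gives transversality of the limiting subspaces, $e_+\cap f_+=\{0\}$, and the matching condition forces $\widehat{\vp}(0)\in e_+\cap e_-=e_+\cap f_+=\{0\}$. Since $\widehat{\vp}$ solves a first-order linear ODE on each half-line with zero data at $x_1=0$, it vanishes identically. No $L^2$ hypothesis on $\widehat{\vp}$ is needed.
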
 

\begin{proof}
	Let $e_\pm$ and $f_\pm$ denote the subspaces of decaying and growing solutions
	of \eqref{Mint} as $x_1\to \pm \infty$ for $\Re \lambda>0$, and their continuous extensions
	to $\Re \lambda \geq 0$ (well-defined, by the nonglancing assumption for \eqref{Mint}).

	For $\mathrm{Re}(\lambda)>0$, subspaces $\mathcal{E}^\pm$ correspond to decaying modes 
	$\widehat{\mathcal{V}}^\pm$ at $x_1\to \pm \infty$ of
	\be\label{Gdef}
	\d_{x_1}\widehat{\mathcal{V}}^\pm= -\mathcal{A}_1^{-1}(\lambda + \sum_{j\neq 2}i\xi_j \mathcal{A}_j)
	\widehat{\mathcal{V}}^\pm
	=: G^\pm(\lambda, \xi)\widehat{\mathcal{V}^\pm},
	\ee
	whence, by Lemma \ref{weaksoln}, 
$
\hat \phi^\pm:=
	\hat \Gamma \widehat{\mathcal{V}}^\pm= (\Gamma_1 G^\pm + \sum_{j\neq 2}i\xi_j \Gamma_j) \widehat{\mathcal{V}}^\pm
$
	is a decaying weak solution of \eqref{Mint},
	hence lies in the decaying subspaces $e^\pm$ of 
	$-\mathcal{M}_1(\lambda j+ \sum_{j\neq 2}i\xi_j \mathcal{M}_j)$ as $x_1\to \pm \infty$,
	with jump condition $[\mathcal{M_1}\hat \phi]=\mathcal{M}_1 \hat \phi=0$,
	or, by invertibility of $\mathcal{M}_1$ (recall: a consequence of \eqref{Anonchar}, 
	$ [\hat \phi]=0 $.

	By the assumed continuity of $\mathcal{E}^\pm$, we have at $(i\tau, \xi_0)$ that
	normal modes $\widehat{\mathcal{V}}$ may be realized as limits of decaying normal modes on $\mathrm{Re}(\lambda) >0$
	in the limit as $(\lambda, \xi)\to (i\tau, \xi_0)$, hence $\hat \phi=\hat \Gamma \widehat{\mathcal{V}}$
	(piecewise continuous) is the limit of decaying solutions of \eqref{Mint},
	i.e., vectors lying in 
	$e^\pm$ for $x_1\gtrless 0$.
	By the assumed nonglancing of $(i\tau, \xi_0)$ with respect to
	\eqref{Mint}- specifically, the implied continuity of 
	$e^\pm$ 
	as $(\lambda, \xi)\to (i\tau, \xi_0)$ from $\mathrm{Re}(\lambda) >0$- 
	$\hat \phi$ must thus lie in 
	$e^\pm$ 
	for all $x_1$, with $[ \hat \phi]=0$
	forcing $\hat \phi(0^+)=\hat \phi(0^-)$ at $x_1$=0.
	Noting, since $\mathcal{M}_j$ are constant, that
	$e^\pm= f^\mp$, and applying the nonglancing assumption
	as second time- specifically, the fact that $e^\pm$ and $f^\pm$ converge to 
	transverse
	limiting subspaces, or $e^+\cap f^+=\emptyset$ and $e^-\cap f^-=\emptyset$,
	we find that $\hat \phi(0)$ must lie in 
	$(e\cap f)^\pm= \{0\}$, and thus 
	$\hat \phi= \hat \Gamma \widehat{ \mathcal{V}}\equiv 0$ as claimed.
\end{proof}

Before discussing continuity of subspaces, we establish a preliminary result of interest in its own right,
comprising the Laplace--Fourier transform analog of \eqref{Linv}.

\begin{lemma}\label{keyG}
Assuming \eqref{inv}, \eqref{Anonchar}, 
	let $G_\pm(\lambda, \xi)$ as in \eqref{Gdef} be defined as 
	$-\mathcal{A}_1^{-1}(\lambda + \sum_{j\neq 2}i\xi_j \mathcal{A}_j)_\pm$,
	and $H_\pm(\lambda, \xi)$ and $L_\pm$ as 
	$-\mathcal{M}_1^{-1}(\lambda + \sum_{j\neq 2}i\xi_j \mathcal{M}_j)_\pm$
	and $(\Gamma_1 G + \sum_{j\neq 2}i\xi_j \Gamma_j)_\pm$.
	Then, 
	\be\label{keyGrel}
	(LG)_\pm= (HL)_\pm.
	\ee
\end{lemma}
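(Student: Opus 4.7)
The plan is to establish \eqref{keyGrel} by direct algebraic computation, recognizing it as the finite-dimensional Laplace--Fourier analog of the intertwining identity \eqref{Linv}. The essential input is the collection of three identities \eqref{rels}, namely $\Gamma_1 \mathcal{A}_1 = \mathcal{M}_1 \Gamma_1$, $\widetilde{\Gamma}\widetilde{\mathcal{A}} = \widetilde{\mathcal{M}}\widetilde{\Gamma}$, and the ``cross'' relation $\widetilde{\Gamma}\mathcal{A}_1 + \Gamma_1 \widetilde{\mathcal{A}} = \widetilde{\mathcal{M}}\Gamma_1 + \mathcal{M}_1\widetilde{\Gamma}$. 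The noncharacteristicity hypothesis \eqref{Anonchar}, combined with the decomposition \eqref{specunion}, guarantees invertibility of $\mathcal{A}_1$ and of $\mathcal{M}_1$ on $\mathrm{Range}\,\Gamma_1$, which is the only subspace on which $\mathcal{M}_1^{-1}$ will actually be applied.

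First, I would pull $\Gamma_1$ through $\mathcal{A}_1^{-1}$: applying $\mathcal{A}_1^{-1}$ on the right of $\Gamma_1 \mathcal{A}_1 = \mathcal{M}_1 \Gamma_1$ and then using that $\mathcal{M}_1$ is invertible on $\mathrm{Range}\,\Gamma_1 \supseteq \mathrm{Range}(\Gamma_1 \mathcal{A}_1^{-1})$ yields $\Gamma_1 \mathcal{A}_1^{-1} = \mathcal{M}_1^{-1}\Gamma_1$. Substituting into $L = \Gamma_1 G + \widetilde{\Gamma}$ with $G = -\mathcal{A}_1^{-1}(\lambda + \widetilde{\mathcal{A}})$ gives
\[
L = -\mathcal{M}_1^{-1}\bigl(\lambda \Gamma_1 + \Gamma_1 \widetilde{\mathcal{A}}\bigr) + \widetilde{\Gamma},
\]
and inserting the cross relation to replace $\Gamma_1 \widetilde{\mathcal{A}}$ by $\widetilde{\mathcal{M}}\Gamma_1 + \mathcal{M}_1\widetilde{\Gamma} - \widetilde{\Gamma}\mathcal{A}_1$ allows the $\widetilde{\Gamma}$ terms to cancel, producing the intermediate identity
\[
L = H\Gamma_1 + \mathcal{M}_1^{-1}\widetilde{\Gamma}\mathcal{A}_1.
\]
This already exhibits $L$ as an intertwiner between $\Gamma_1$ and $H$, up to a correction term involving $\widetilde{\Gamma}\mathcal{A}_1$.

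Next, I would compose on the right with $G$. The first term yields $H\Gamma_1 G$ directly. For the second, using $\mathcal{A}_1 G = -(\lambda + \widetilde{\mathcal{A}})$ and then $\widetilde{\Gamma}\widetilde{\mathcal{A}} = \widetilde{\mathcal{M}}\widetilde{\Gamma}$ gives
\[
\mathcal{M}_1^{-1}\widetilde{\Gamma}\mathcal{A}_1 G = -\mathcal{M}_1^{-1}\bigl(\lambda\widetilde{\Gamma} + \widetilde{\Gamma}\widetilde{\mathcal{A}}\bigr) = -\mathcal{M}_1^{-1}(\lambda + \widetilde{\mathcal{M}})\widetilde{\Gamma} = H\widetilde{\Gamma}.
\]
Summing, $LG = H\Gamma_1 G + H\widetilde{\Gamma} = H(\Gamma_1 G + \widetilde{\Gamma}) = HL$, as claimed.

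There is no serious obstacle: the argument is a short chain of substitutions dictated by \eqref{rels}. The only point requiring attention is to verify that $\mathcal{M}_1^{-1}$ is being applied only to vectors in $\mathrm{Range}\,\Gamma_1$, so that the invertibility supplied by \eqref{Anonchar}--\eqref{specunion} suffices; in the MHD application, where $m = 1$ and $\mathcal{M}_1 = \beta$, this is automatic. A more conceptual motivation, worth mentioning though not needed in the proof, is that \eqref{keyGrel} is exactly the consistency condition between Lemma \ref{weaksoln}(i)---which says that $\widehat{\Gamma}\widehat{\mathcal{V}} = L\widehat{\mathcal{V}}$ satisfies the $H$-dynamics in $x_1$---and the interior equation $\partial_{x_1}\widehat{\mathcal{V}} = G\widehat{\mathcal{V}}$.
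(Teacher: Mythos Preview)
Your proof is correct and follows essentially the same route as the paper's: both derive the intermediate representation $L = H\Gamma_1 + \mathcal{M}_1^{-1}\widetilde{\Gamma}\mathcal{A}_1$ from the relations \eqref{rels}, then compose on the right with $G$ and use $\mathcal{A}_1 G = -(\lambda+\widetilde{\mathcal{A}})$ together with $\widetilde{\Gamma}\widetilde{\mathcal{A}}=\widetilde{\mathcal{M}}\widetilde{\Gamma}$ to reduce the correction term to $H\widetilde{\Gamma}$. Your write-up is in fact somewhat cleaner than the paper's, which carries the sums $\sum_{j=2}^d i\xi_j\Gamma_j$ explicitly rather than packaging them as $\widetilde{\Gamma}$.
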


\begin{proof}
	Using \eqref{rels} to express 
	\be\label{rels2}
	\Gamma_j A_j= M_j\Gamma_j; \qquad 
	\Gamma_1 A_1^{-1}= M_1^{-1}\Gamma_1; 
	\qquad \Gamma_1 A_j= M_j \Gamma_1 + M_1 \Gamma_j - \Gamma_j A_1, 
	\ee
	we may represent $L$ alternatively (dropping $\pm$ for ease of writing) as
	$$
	-(\Gamma_1 A_1^{-1}(\lambda + \sum_{j=2}^n i\xi_j A_j) + i\sum_{j=2}^n i\xi_j \Lambda_j
	=-M_1^{-1}  \Gamma_1 (\lambda + \sum_{j=2}^n i\xi_j A_j) + i\sum_{j=2}^n i\xi_j \Lambda_j,
	$$
	or, expanding and recombining using \eqref{rels2},
	\be\label{Lalt}
	L= H\Gamma_1  +  M_1^{-1}\sum_{j=2}^n \Gamma_j A_1
	\ee
	Thus, 
	$$
	LG= H\Gamma_1 G  +  M_1^{-1}\sum_{j=2}^n \Gamma_j A_1 G,
	$$
	which is equal to $H(\Gamma_1 G + \sum_{j=2}^n i\xi_j \Gamma_j)$ if and only if
	$ M_1^{-1}\sum_{j=2}^n \Gamma_j A_1 G= H \sum_{j=2}^n i\xi_j \Gamma_j), $
	or
	$$
	 -M_1^{-1}\sum_{j=2}^n \Gamma_j (\lambda+ \sum_{j=2}^n i\xi_j A_j)= 
	 -M_1^{-1}(\lambda + \sum_{j=2}^n i \xi_j M_j) \sum_{j=2}^n i\xi_j \Gamma_j),
	 $$
	 as follows by inspection using \eqref{rels2} and comparing coefficients of $\lambda$, $\xi$, and $\xi^2$
	 terms.
\end{proof}

\begin{corollary}\label{Ginv}
Assuming \eqref{inv}, \eqref{Anonchar}, $\ker L_\pm$ is a right invariant subpace 
and $\Range L_\pm^*$ a left invariant subspace of $G_\pm$.
	Assuming $L_\pm(\lambda, \xi)$ is full rank $\ell$ for all $\lambda, \xi$, then we have the
	decomposition $\C= \ker L_\pm \oplus \Range L_\pm^*$, with $\ker L$ an $n-\ell$ dimensional
	analytically varying invariant subspace of $G_\pm$, consisting precisely of those normal modes
	$\hat V$ satisfying the constraint $\hat \Gamma \hat V=0$, while $\Range L^T$ gives a complementary
	antiholomorphic space.
\end{corollary}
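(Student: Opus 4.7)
The plan is to deduce the corollary essentially as a corollary of the algebraic identity $L_\pm G_\pm = H_\pm L_\pm$ of Lemma \ref{keyG}, together with standard linear algebra producing a direct-sum decomposition with analytic dependence.

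First I would establish the two invariance statements. For right invariance of $\ker L_\pm$ under $G_\pm$: if $v \in \ker L_\pm$ then by \eqref{keyGrel},
\[
L_\pm G_\pm v \;=\; H_\pm L_\pm v \;=\; 0,
\]
so $G_\pm v \in \ker L_\pm$. For left invariance of $\Range L_\pm^*$: conjugate-transposing \eqref{keyGrel} gives $G_\pm^* L_\pm^* = L_\pm^* H_\pm^*$, so for any $w = L_\pm^* u$ we have $G_\pm^* w = L_\pm^* H_\pm^* u \in \Range L_\pm^*$.

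Next I would produce the decomposition. Under the hypothesis that $L_\pm(\lambda,\xi)$ has full (constant) rank $\ell$, rank-nullity gives $\dim \ker L_\pm = n-\ell$ and $\dim \Range L_\pm^* = \ell$; combined with the elementary identity $\ker L_\pm = (\Range L_\pm^*)^\perp$ in the standard Hermitian inner product on $\C^n$, this yields the direct sum $\C^n = \ker L_\pm \oplus \Range L_\pm^*$. Analytic variation of $\ker L_\pm$ as a function of $(\lambda,\xi)$ then follows from the constant rank of the holomorphic matrix $L_\pm$ (for instance by applying the implicit function theorem to a local analytic selection of a basis of the kernel). Since $L_\pm^*$ is antiholomorphic in $(\lambda,\xi)$, its range is likewise an antiholomorphically varying complement.

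Finally, the identification with constrained normal modes is purely formal: from the definition $L_\pm = \Gamma_1 G_\pm + \widetilde{\Gamma}$, for any vector $\widehat{\mathcal V}$ arising as a solution of $\partial_{x_1}\widehat{\mathcal V} = G_\pm \widehat{\mathcal V}$, one computes
\[
\widehat{\Gamma}\, \widehat{\mathcal V} \;=\; \Gamma_1\partial_{x_1}\widehat{\mathcal V} + \widetilde{\Gamma}\widehat{\mathcal V} \;=\; \bigl(\Gamma_1 G_\pm + \widetilde{\Gamma}\bigr)\widehat{\mathcal V} \;=\; L_\pm \widehat{\mathcal V},
\]
so $\widehat{\Gamma}\widehat{\mathcal V} = 0$ if and only if $\widehat{\mathcal V} \in \ker L_\pm$, identifying this $G_\pm$-invariant subspace with precisely the normal-mode amplitudes satisfying the constraint.

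There is no substantial obstacle: the content is just packaging \eqref{keyGrel}. The only step requiring slight care is the analytic dependence of $\ker L_\pm$ on $(\lambda,\xi)$, which is standard once constant rank is assumed; everything else reduces to bookkeeping around the key identity of Lemma \ref{keyG}.
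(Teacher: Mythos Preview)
Your proposal is correct and follows exactly the approach the paper intends: the corollary is stated without proof, immediately after Lemma~\ref{keyG}, as a direct consequence of the intertwining relation $LG=HL$, and your argument unpacks precisely the linear-algebraic content implicit in that relation. The only minor remark is that the paper's phrasing oscillates between $L^*$ and $L^T$; your use of the conjugate transpose is the right choice given that $L$ is holomorphic in $(\lambda,\xi)$ and the complementary space is asserted to be antiholomorphic.
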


For both the hybrid MHD system and the $\beta$-model, it is clear by inspection that $L_\pm$ is full rank one,
hence the Corollary applies.
Corollary \ref{Ginv} has the interesting interpretation as defining a ``reduced'' interior system in the Laplace-Fourier transform setting analogous to that of the Fourier settings, pertaining precisely to those modes satisfying the 
constraint $\Gamma$.
Moreover, coordinatizing via the splitting $\ker L\oplus \Range L^*$, we may convert $G$ by a $C^\infty$
transformation ($G$, $L$ holomorphic in $\lambda$, $\xi$, and $L^*$ anti-holomorphic )
to the canonical upper block triangular form
\be\label{canonG}
	\begin{pmatrix}  G|_{\ker L}(\lambda, \xi) &  b(\lambda, \xi)\\ 0 & H(\lambda, \xi)
	\end{pmatrix},
	\qquad H=-M_1^{-1}(\lambda + \sum_{j=2}^n i\xi_j M_j), \, L=(\Gamma_1 G+ \sum_{j=2}^n i\xi_j \Gamma_j)(\lambda, \xi),
	\ee
where the lower right-hand block is obtained from \eqref{keyGrel}, via the computation 
$ (LL^*)^{-1}L G L^*= (LL^*)^{-1}HL L^*=H.  $

Defining nonglancing and points of continuity for the reduced systems $G|_{\ker L}$ and $H$ in the obvious way,
we may at a point of continuity for $G_{\ker L}$, by a further {\it continuous} (not necessarily
$C^\infty$) transformation, reduce to form
\be\label{concanonG}
	\begin{pmatrix}  g_+ & c_+ & b_+\\ 0 & g_- &  b_-\\ 0 & 0 & H \end{pmatrix} (\lambda, \xi),
	\ee
	where $g_+$ is the reduction of $G_{\ker L}$ to its stable subspace, and, at a nonglancing point of
	$H$, still further to
\be\label{congcanonG}
\begin{pmatrix}  g_+ & c_+ & b_+ & d_+\\ 0 & g_- &  b_- & d_-\\ 0 & 0 & h_+& 0\\ 0 & 0 & 0& h_- \end{pmatrix}
		(\lambda, \xi),
	\ee
	where $h_\pm$ are the reductions of $H$ to its stable and unstable subspaces.

	We connect continuity of the full and reduced sytems in two important cases.

\begin{proposition}\label{redcont}
	Assume \eqref{inv}, \eqref{Anonchar}, and $L_\pm$ is full rank for all $\lambda,\xi$. 
	\begin{enumerate}
	 \item If, local to $(i\tau, \xi_0)$, $\tau, \xi_0\in \R$, 
	there is a continuous right invariant space
	$\Span \{ R_1, \dots, R_\ell\}$ of $G$ such that 
			$LR$ is invertible for 
			$R$ defined as
			$R(\lambda, \xi)=(R_1, \dots, R_\ell)$, then $(i\tau, \xi_0)$ is a point of continuity
	for \eqref{int} if and only if it is a point of continuity of both the associated reduced system
	and the complementary system $\partial_x z=Hz$.
	\item  A point $(i\tau, \xi_0)$, $\tau, \xi_0\in \R$ that is a nonglancing point for \eqref{Mint}
	is a point of continuity for \eqref{int} if it is a point of continuity of the associated reduced system
	$G|_{\ker L}$. 
	\end{enumerate}

\end{proposition}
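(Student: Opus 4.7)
The plan is to work on each side $x_1 \gtrless 0$ separately (suppressing the $\pm$ subscripts throughout) and reduce $G$ to successively simpler block forms using the splittings produced by Corollary \ref{Ginv} and the intertwining relation $LG = HL$ of Lemma \ref{keyG}. In each form the stable subspace $\mathcal{E}^+$ of $G$ decomposes into contributions coming from $G|_{\ker L}$ and from $H$, and continuity at $(i\tau,\xi_0)$ then reduces to continuity of each constituent piece.

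For part (i), I first observe that the hypothesis $LR$ invertible forces $\Span R \cap \ker L = \{0\}$, so the dimension count $\ell + (n-\ell) = n$ yields the direct-sum decomposition $\C^n = \ker L \oplus \Span R$. Both summands are $G$-invariant---the first by Corollary \ref{Ginv}, the second by hypothesis---so in an adapted basis
$$G \;\sim\; \begin{pmatrix} G|_{\ker L} & 0 \\ 0 & G|_{\Span R} \end{pmatrix}.$$
Applying Lemma \ref{keyG} to $v \in \Span R$ gives $L(Gv) = H(Lv)$, and since $L|_{\Span R}$ is an isomorphism (as $LR$ is invertible), this displays $G|_{\Span R}$ as similar to $H$ via $L|_{\Span R}$. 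Hence
$$\mathcal{E}^+_G \;=\; \mathcal{E}^+_{G|_{\ker L}} \;\oplus\; (L|_{\Span R})^{-1}\!\left(\mathcal{E}^+_H\right),$$
and since $R$ and $L$, and therefore the complementary projector $R(LR)^{-1}L$, are all continuous at $(i\tau,\xi_0)$ by hypothesis and by analyticity of $L$, $\mathcal{E}^+_G$ is continuous there if and only if each of $\mathcal{E}^+_{G|_{\ker L}}$ and $\mathcal{E}^+_H$ is. This gives the claimed equivalence in both directions simultaneously.

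For part (ii), where no a priori invariant complement is available, I would instead use the canonical upper-triangular form \eqref{canonG}, and then refine it to the finer form \eqref{congcanonG} by invoking both hypotheses: the point-of-continuity assumption on $G|_{\ker L}$ supplies a continuous stable/unstable splitting $g_+,g_-$, while nonglancing for $H$ supplies a continuous block-diagonalization $h_+,h_-$ whose stable and unstable spectra have distinct limits at $(i\tau,\xi_0)$. The stable subspace $\mathcal{E}^+_G$ is then realized as the set of initial data for $\partial_x v = Gv$ whose forward flow decays; block-by-block, $v_4$ (in $h_-$) must vanish, $v_3$ (in $h_+$) is free, and $v_2$, $v_1$ are recovered by solving inhomogeneous equations of Sylvester type whose solutions admit standard contour-integral representations depending continuously on the block entries.

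The main obstacle I expect is extending the convolution formula
$$v_2(0) \;=\; -\int_0^\infty e^{-s g_-}\, b_-\, e^{s h_+}\, v_3(0)\, ds$$
continuously up to $(i\tau,\xi_0)$: for $\Re\lambda > 0$ one has $\Re\sigma(g_-) > 0 > \Re\sigma(h_+)$ and the integral converges exponentially, but in the limit $\Re\lambda \to 0^+$ these real parts can both touch zero. Nonglancing of $\mathcal{M}$ preserves separation of imaginary parts of $\sigma(h_\pm)$ in the limit, and this is precisely what allows rewriting $v_2(0)$ as a bounded contour integral $(2\pi i)^{-1}\oint (g_- - z)^{-1} b_- (h_+ - z)^{-1}\,dz$ taken along a contour separating the two spectra, so that continuity at $(i\tau,\xi_0)$ follows by dominated convergence. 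Combining this with the (already-continuous) $v_1$ and $v_3$ contributions and with continuity of the splittings $(g_+,g_-)$ and $(h_+,h_-)$ yields continuity of $\mathcal{E}^+_G$ at $(i\tau,\xi_0)$, as claimed.
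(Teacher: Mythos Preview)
Your part (i) is correct and essentially identical to the paper's: both block-diagonalize $G$ via $\ker L \oplus \Span R$ and read off the equivalence directly.

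For part (ii), your overall strategy is right and parallels the paper's: pass to the block form \eqref{congcanonG} and analyze $\mathcal{E}^+_G$ block by block, with the crux being the $g_-$-component $v_2(0)$ governed by the Sylvester equation $g_-T - Th_+ = -b_-$. But your continuation argument has a genuine gap. The contour in
\[
\frac{1}{2\pi i}\oint (g_- - z)^{-1}\, b_- \,(h_+ - z)^{-1}\,dz
\]
must separate $\sigma(g_-)$ from $\sigma(h_+)$, whereas what nonglancing of $\mathcal{M}$ gives you is separation of $\sigma(h_+)$ from $\sigma(h_-)$. These are different conditions: the first asks that no limiting \emph{unstable} eigenvalue of the reduced system $G|_{\ker L}$ coincide with a limiting \emph{stable} eigenvalue of $H$, and neither hypothesis in (ii) directly enforces that. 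At $(i\tau,\xi_0)$ both $\sigma(g_-)$ and $\sigma(h_+)$ may touch $i\R$, and nothing you have invoked prevents them from meeting there. Your sentence ``nonglancing of $\mathcal{M}$ preserves separation of imaginary parts of $\sigma(h_\pm)$ \dots\ and this is precisely what allows \dots\ a contour separating the two spectra'' is the non-sequitur.

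The paper's proof is more compressed at exactly this step: rather than tracking $T$ itself, it uses the separation available for $\Re\lambda > 0$ to ``eliminate the second row'' of the complementary piece $r_+$ and then argues at the Grassmannian level that in coordinates \eqref{congcanonG} the subspace $E_+$ equals the constant span of the $g_+$ and $h_+$ blocks, so that continuity is trivial since the coordinate change itself is continuous. Your explicit Sylvester route is a reasonable way to try to flesh this out, but the spectral separation you would actually need for your contour argument is $\sigma(g_-)\cap\sigma(h_+)=\emptyset$, not the one supplied by nonglancing of $\mathcal{M}$.
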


\begin{proof}
	(i) In this case, coordinatizing via the splitting $\ker L \oplus \Range (R)$ reduces $G$ to block-diagonal
	form $G=\blockdiag \{G|_{\ker L}, H\}$, whence the result follows trivially.
	(ii) Using the canonical coordinatization \eqref{congcanonG}, we find, for $\mathrm{Re}(\lambda) >0$ and
	local to $(i\tau, \xi_0)$, that
	$E_+=
	\begin{pmatrix} \Id & 0 & 0 & 0\end{pmatrix} \oplus r_+$, where $r$ is of form
	$r_+(\lambda, \xi)= \begin{pmatrix} * & 0 & \Id & 0\end{pmatrix}$; equivalently, that there exists $r_+$ of
		this form with range a right-invariant subspace of $G$ modulo $e_+$, 
		where $e_+$ denotes the continuous
		extension of the stable subspace of $H$.
		(Here, we are using in an important way the fact that $\sigma(g_-)$ and $\sigma (h_+)$ 
		are separated for $\mathrm{Re}(\lambda)>0$, in eliminating the second row of $r_\pm$.)
		But this evidently is equivalent to 
		$$
		E_+= \begin{pmatrix} \Id \\ 0 \\0\\0\end{pmatrix} \oplus 
		\begin{pmatrix} 0 \\ 0 \\\Id\\0\end{pmatrix}\equiv \constant,
			$$
		yielding continuity as a trivial consequence.
\end{proof}

\begin{corollary}\label{mhdcont}
For stationary MHD shocks in the $x_1$ direction of {\emph either} 
the $\beta$-model \eqref{eq-2}, $\beta\neq 0$, 
{\emph or of the standard (hybrid) model of \cite{BT2,MeZ2,FT}},
	all $(i\tau,\xi_0)$ are points of continuity for the associated interior equation \eqref{int},
	for both reduced and full models, so long as 
	\be\label{ndeg}
	(|\bar h|^2/p'(\bar \rho))_\pm\neq 1.
	\ee
\end{corollary}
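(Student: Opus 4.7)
The strategy is to invoke Proposition \ref{redcont}(ii), which reduces the task to verifying (a) that every $(i\tau,\xi_0)$ is a nonglancing point for the constraint-propagation system \eqref{Mint}, and (b) that every $(i\tau,\xi_0)$ is a point of continuity for the reduced interior system $G|_{\ker L}$ built from $\mathcal{A}_1$, $\mathcal{A}_2$ under the assumption \eqref{ndeg}. The decomposition \eqref{canonG} shows that outside the constraint subspace the dynamics is governed exactly by $H=-\mathcal{M}_1^{-1}(\lambda+i\xi \mathcal{M}_2)$, so it suffices to check these two points separately.

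For step (a), I would exploit that in both models $\mathcal{M}$ is a scalar first-order operator. For the $\beta$-model one has $\mathcal{M}_1=\beta\neq0$ and $\mathcal{M}_2=0$, so \eqref{Mint} has the single eigenvalue $-\lambda/\beta$, whose real part keeps a fixed sign throughout $\mathrm{Re}(\lambda)>0$; consequently the stable and unstable subspaces of \eqref{Mint} are either trivially $\{0\}$ or the full one-dimensional space, and no coalescence across the imaginary axis can occur. For the hybrid system of \cite{BT2,MeZ2,FT}, the same computation applies to the transport operator $\partial_t+\bar u\cdot\nabla$ that carries $\mathrm{div}(h)$ (cf.\ \eqref{weakMeZ} and Remark \ref{normrmk}); its characteristic speed $\bar u_1\pm$ is scalar, so again the nonglancing condition reduces to checking a single nonvanishing real part, which holds on all of $\{\mathrm{Re}(\lambda)>0\}$ and extends continuously to the boundary.

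For step (b), I would observe that the reduced system $G|_{\ker L}$ is precisely the $(n-1)$-dimensional symmetrizable hyperbolic MHD interior operator obtained after eliminating the constraint direction — this is exactly the operator whose continuity of stable/unstable subspaces is the subject of the structural hypotheses verified for MHD in \cite[\S 7]{MeZ2}. The standard argument goes as follows: by the general theory of symmetric hyperbolic systems (Kreiss' lemma, cf.\ \cite{MeZ2}), stable and unstable subspaces extend continuously to $\mathrm{Re}(\lambda)\geq 0$ except at glancing frequencies where two characteristic roots $\mu^+$, $\nu^-$ coalesce. For MHD the characteristic determinant factors into an entropy/Alfv\'en piece and a magnetosonic piece, and the coalescence of fast with slow magnetosonic modes along the imaginary axis occurs precisely at the triple-umbilic locus $|\bar h|^2=p'(\bar\rho)$. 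Hypothesis \eqref{ndeg} is exactly the exclusion of this locus on both sides of the shock, and so under \eqref{ndeg} there is no coalescence and the subspaces extend continuously at every $(i\tau,\xi_0)$.

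The main obstacle is the bookkeeping in step (b): one has to verify explicitly, using the form of $\mathcal{A}_1^{-1}(\lambda + i\xi \mathcal{A}_2)$ restricted to $\ker L_\pm$, that the only way two eigenvalues can meet on the imaginary axis is through the fast--slow magnetosonic coalescence controlled by $|\bar h|^2/p'(\bar\rho)$. This is essentially a characteristic-root computation for the reduced MHD symbol, at fixed real $(\tau,\xi_0)$, and once it is carried out the continuity of the stable subspaces follows from a standard perturbation argument (each semisimple eigenvalue with nonzero real part admits a holomorphic spectral projector extending to a neighborhood of its imaginary limit). Combined with step (a), Proposition \ref{redcont}(ii) then yields the claim for the $\beta$-model, and the identical argument, using the hybrid-model version of \eqref{Mint}, yields it for the hybrid model.
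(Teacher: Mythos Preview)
Your overall architecture is sound, but step~(b) has a gap that the paper's proof is designed precisely to fill. You assert that the reduced system $G|_{\ker L}$ is ``exactly the operator whose continuity of stable/unstable subspaces is the subject of the structural hypotheses verified for MHD in \cite[\S 7]{MeZ2}.'' This is not literally true: what \cite{MeZ2} establishes is continuity for the \emph{full} five-dimensional hybrid system (without constraint), not for the four-dimensional reduced system $G|_{\ker L}$. These are different operators, and the characteristic-root computation you allude to is not carried out in \cite{MeZ2} for the reduced object.

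The paper closes this gap by reversing your order of argument. It first invokes \cite{MeZ2} for the full hybrid model, then exhibits the explicit analytic eigenvalue/eigenvector pair $(\mu_0,S_0)$ carrying the divergence mode and checks that $LS_0\neq 0$ at imaginary $\mu_0$; this is exactly the hypothesis of Proposition~\ref{redcont}(i), which then yields continuity for the reduced system as a \emph{consequence} of full-hybrid continuity. Only after reduced continuity is established this way does the paper invoke Proposition~\ref{redcont}(ii), with scalar (hence nonglancing) $H$, to conclude full continuity for the $\beta$-model. So your step~(a) and your use of Proposition~\ref{redcont}(ii) for the $\beta$-model match the paper, but for the hybrid case the paper uses part~(i), not~(ii), and the direction of inference (full $\Rightarrow$ reduced for hybrid, then reduced $\Rightarrow$ full for $\beta$) is the point you are missing.
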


\begin{proof}
	({\it hybrid model})
	It was shown in \cite{MeZ2} that, assuming \eqref{ndeg}, all points $(i\tau, \xi_0)$ are points of 
	continuity for the hybrid model without constraint,
a consequence of \cite[Lemma 7.2]{MeZ2} specialized to the 2D case (that is, ignoring middle, 
Alfven modes $\lambda_2^\pm$), together with \cite[Theorem 5.2]{MeZ2}.
	Moreover, the hyperbolic characteristic $\lambda_0(k)= \bar u_1k_1 + \bar u_2 k_2$ carrying nonzero
	divergence of $h$, with associated eigenvector $R_0(k)=(0,0,0, k_1,k_2)$,
	may be directly inverted to obtain an analytic
	eigenvalue $\mu_0(\lambda, \xi)=  \bar u_1^{-1} (\lambda+ \bar u_2 i\xi)$ 
	of $G=-\mathcal{A}_1^{-1}(\lambda + i\xi_0 \mathcal{A}_2)$
	with associated eigenvector $S_0=(0,0,0, \mu/i, \xi)$; see \cite[(A.4)]{MeZ2}.
	For $\mu_0(i\tau, \xi_0)=i k_1$ pure imaginary, and $(\lambda, \xi)\to (i\tau, \xi_0)$,	this is not annihilated by the constraint, since
	$$
	(\Gamma_1 \mu_0 + \Gamma_1 i \xi)S_0 = \mu_0 \left(\frac{\mu_0}{i}\right) + i\xi (\xi)= i\Big ( \left(\frac{\mu_0}{i}\right)^2 + \xi^2\Big)
	\to
	 i(k_1^2+\xi^2)\neq 0
	$$
	as $\mu_0\to ik_1$.
	But, this implies also $LS_0\neq 0$, since $LS_0=(\Gamma_1 \mu + i\xi \Gamma_2)S_0$.
	Such points are therefore points of continuity for the reduced system by Proposition \ref{redcont}(i).
	If $\mu_0$ is not pure imaginary, on the other hand, then for $(\lambda, \xi)$ near $(i\tau, \xi_0)$,
	$\lambda_0$ is strictly stable, and so, along with other strictly stable modes, continues analytically
	and transversally to the space $\ker (L)$, so that continuity for the full system reduces also in this case
	to continuity of the stable subspace for the reduced problem.
	Thus, we conclude from continuity for the full system continuity for the reduced system as well.

	({\it $\beta$-model}) 
	For both models, the complementary system associated with $H$, being scalar, is necessarily
	nonglancing.  Since we have already shown continuity for the reduced system via our
	treatment of the hybrid model, we thus obtain continuity for the full $\beta$-model by
	Proposition \ref{redcont}(ii).
\end{proof}

We define {\it reduced weak Lopatinsky stability} as nonexistence of solutions of
\eqref{int}-\eqref{LRH} lying in 
$\mathcal{E}^\pm$
for $x_1\gtrless \infty$.
By Corollary \ref{weakstab},
this is equivalent to weak Lopatinski stability for the full system, without constraint.

When the decaying subspaces {$\mathcal{E}^\pm_0$ of the reduced system extend continuously for all $\lambda=i\tau$, we may define a notion of {\it reduced uniform Lopatinsky stability} as nonexistence of solutions of
\eqref{int}-\eqref{LRH} lying in the limiting spaces $\mathcal{E}^\pm_0$ for $x_1\gtrless 0$.
When the subspaces $\mathcal{E}^\pm$ of the full system extend continuously as well, we have by
Proposition \ref{glanceprop} that full and reduced uniform Lopatinski stability are equivalent.

Combining results, we have as an immediate consequence of Corollary \ref{mhdcont} the following definitive conclusion,
elucidating and significantly extending 
the fundamental observation of Blokhin-Trakhinin \cite{BT2,FT}
that weak stability for constrained MHD is equivalent to weak stability without constraint.
In particular, this shows for the first time that the nonlinear stability condition of
Proposition \ref{MeZprop} is sharp.

\begin{proposition}\label{mhdglance}
For stationary MHD shocks in the $x_1$ direction of {\emph either} 
the $\beta$-model \eqref{eq-2}, $\beta\neq 0$, 
{\emph or of the standard (hybrid) model of \cite{BT2,MeZ2,FT}},
uniform Lopatinsky stability holds with the divergence free constraint if and only if it holds
	without the constraint, if and only if it holds for the reduced model defined by $G|_{\ker L}$.
\end{proposition}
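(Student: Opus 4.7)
The plan is to combine three ingredients that have already been put in place: continuity of the decaying subspaces up to the imaginary axis (Corollary \ref{mhdcont}), triviality of glancing for the scalar constraint-propagation operator $\mathcal{M}$, and the automatic satisfaction of $\widehat{\Gamma}\widehat{\mathcal{V}}\equiv 0$ at extended normal modes (Proposition \ref{glanceprop}). Once these are in place, the three versions of uniform Lopatinsky stability will be seen to collapse to a single condition on the reduced subspace $\ker L$.

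First, I would invoke Corollary \ref{mhdcont} (under the standing nondegeneracy \eqref{ndeg}) to conclude that for both the $\beta$-model and the hybrid model, every boundary point $(i\tau,\xi_0)$, $\tau,\xi_0\in\mathbb R$, is a point of continuity for the interior equation \eqref{int}, for both the full and the reduced systems. This is what is needed to define uniform Lopatinsky stability as nonvanishing of the limiting Lopatinsky determinant on $\mathrm{Re}(\lambda)\geq 0$ away from the origin, and to define reduced uniform Lopatinsky stability analogously in $\mathcal{E}^\pm_0$.

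Second, for both models the constraint-propagation operator $\mathcal{M}$ is scalar ($m=1$): in the $\beta$-model $\mathcal{M}_1=\beta$, $\mathcal{M}_2=0$, and in the hybrid model $\mathcal{M}=\partial_t+\bar u_1\partial_{x_1}+\bar u_2\partial_{x_2}$. In either case \eqref{Mint} has a single, simple root, so every $(i\tau,\xi_0)$ is automatically a nonglancing point for \eqref{Mint}. Consequently, the hypotheses of Proposition \ref{glanceprop} are met at every boundary point, and any extended normal mode $\widehat{\mathcal{V}}$ of \eqref{int}--\eqref{LRH} satisfies the transformed constraint $\widehat{\Gamma}\widehat{\mathcal{V}}\equiv 0$.

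Third, combining Steps 1--2 with the invariant-subspace structure given by Corollary \ref{Ginv} identifies the three conditions: any extended normal mode of the full system lies in $\ker L$ on each side of the shock, hence gives an extended normal mode of the reduced system $G|_{\ker L}$, and conversely every extended normal mode of the reduced system is one of the full system since $\ker L\subset \mathcal{E}^\pm$ limitingly. Therefore the Lopatinsky determinant for the full system (ignoring the constraint) vanishes at $(\lambda,\xi)$ with $\mathrm{Re}(\lambda)\geq 0$, $(\lambda,\xi)\neq(0,0)$ if and only if the corresponding reduced Lopatinsky determinant does, which in turn is precisely vanishing of the constrained Lopatinsky determinant. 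This proves the three-way equivalence.

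The main subtlety I expect is not a single hard step but rather the bookkeeping in Step 3: one must check that at boundary points where $G$ develops purely imaginary eigenvalues (glancing modes of the full system), the neutral eigenspaces still decompose compatibly with the splitting $\ker L\oplus\Range L^\ast$ from Corollary \ref{Ginv}, so that extended normal modes land in $\mathcal{E}^\pm_0$ precisely when their constrained parts do. This, however, is exactly what Proposition \ref{glanceprop} together with the canonical triangular form \eqref{congcanonG} guarantees, because the scalar constraint block $H$ has no glancing. Thus no new analysis is required beyond applying these structural results, and the proposition follows directly.
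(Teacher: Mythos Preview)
Your proposal is correct and follows essentially the same route as the paper, which presents Proposition \ref{mhdglance} as an immediate consequence of Corollary \ref{mhdcont} together with Proposition \ref{glanceprop} and the scalar (hence automatically nonglancing) nature of the constraint-propagation operator $\mathcal{M}$. One small slip: for the hybrid model you should not include $\partial_t$ in $\mathcal{M}$---by the paper's convention \eqref{Meq}, $\mathcal{M}$ is purely spatial, here $\mathcal{M}=\bar u_1\partial_{x_1}+\bar u_2\partial_{x_2}$---but this does not affect the argument, since what matters is only that $H$ in \eqref{Mint} is scalar.
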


\subsection{The viscous case}\label{s:viscous}
We conclude this section by a brief treatment of the analogous but simpler viscous case,
generalizing and expanding on the approach used in \cite[pp. 2 and 62-64]{JYZ} for the equations of viscoelasticity
(there introduced somewhat implicitly and without particular emphasis in the course of other computations).

System \eqref{eq-2}, in the viscous case $\mu,\eta,\nu >0$, may be expressed in the general form 
\be\label{veqs}
\mathcal{V}_t + \mathcal{F}(\mathcal{V})=
	 \sum_{j,k=1}^{d} (B_{j,k}(\mathcal{V})\mathcal{V}_{x_j})_{x_k} 
\ee
 augmenting the inviscid system \eqref{eqs},
with linearization about a constant solution $\mathcal{V}\equiv \mathcal{V}_0$
given by
\begin{equation}
	\mathcal{V}_t= L\mathcal{V}:=  -\sum_{j=1}^d \mathcal{A}_j \mathcal{V}_{x_j}+
	 \sum_{i,j =1}^{d} B_{i,j}\mathcal{V}_{x_i,x_j},
	 \notag
\end{equation}
where $\mathcal{A}_j=D\mathcal{F}_j(\mathcal{V}_0)$ and $\mathcal{B}_{j,k}=\mathcal{B}_{j,k}(\mathcal{V}_0)$. 
Likewise, Dafermos' involution condition \eqref{inv} is replaced by a viscous counterpart
\be\label{vinv}
\Gamma \mathcal{F}(\mathcal{V})= - \mathcal{M}\Gamma \mathcal{V},
\ee
where $\mathcal{M}$ is now second-order constant-coefficient: 
\be\label{vMeq}
\mathcal{M}:=\sum_{j=1}^{d} \mathcal{M}^1_j \partial_{x_j} + 
	 \sum_{i,j =1}^{d} \mathcal{M}^2_{i,j} \partial_{x_i} \partial_{x_j},
\ee
and (strictly) parabolic:
\be\label{vMhyp}
\mathrm{Re} \sigma\left( \sum_{j=1}^d k_j \mathcal{M}^1_j
	 + \sum_{i,j=1}^{d} \mathcal{M}^2_{i,j} k_i k_j \right) \leq - \theta  |k|^2,
	 \qquad \hbox{\rm some $\theta>0$.}
\ee
As in the inviscid case, \eqref{vinv} induces the linearized version
\be\label{vLinv}
\Gamma L=- \mathcal{M}\Gamma.
\ee

\subsubsection{Dissipativity}\label{s:hp} At a constant state $\mathcal{V}_0$,
the viscous condition analogous to weak hyperbolicity in the inviscid case is 
{\it weak dissipativity}:
\be\label{weakdiss}
\mathrm{Re}\ \sigma( L(k)):=
\mathrm{Re}\ \sigma\left( -\sum_{j=1}^d k_j \mathcal{A}_j
	 + \sum_{i,j=1}^{d} \mathcal{B}_{i,j} k_i k_j \right) \leq - \theta\frac{ |k|^2}{1+ |k|^2},
	 \qquad \hbox{\rm some $ \theta>0$,}
\ee
a condition on the dispersion relation for $L$ related to the genuine coupling condition of Kawashima
\cite{Kaw,Z1,Z2,GMWZ6}.
From \eqref{vMhyp}--\eqref{vLinv}, we find, applying $\Gamma(k)$ to the eigenvalue equation
$(L(k)-\lambda)\mathcal{V}=0$ that $(-\mathcal{M}(k)-\lambda)\Gamma(k) \mathcal{V}=0$ for
$\mathcal{M}(k):= \sum_{j=1}^{d} \mathcal{M}^1_j k_j+ \sum_{i,j =1}^{d} \mathcal{M}^2_{i,j} k_i k_j $,
whence $\Gamma(k) \mathcal{V}=0$ or else $\lambda \in \sigma (-\mathcal{M}(k))$ verifying the weak dissipativity
condition by \eqref{vMhyp}.
Thus, similarly as in Proposition \ref{hypprop} of the inviscid case, 
weak dissipativity holds with constraint $\Gamma(k) \mathcal{V}=0$
if and only if it holds without the constraint.

It was verified in \cite{MeZ2} that \eqref{weakdiss} holds for the hybrid MHD model with and without constraint, whence
it holds for the $\beta$-model as well.

\subsubsection{Persistence of constraints}\label{s:vpersistence}
From \eqref{vMeq}--\eqref{vMhyp}, we obtain immediately persistence of the constraint $\Gamma \mathcal{V}=0$
for smooth solutions of \eqref{veqs}, similarly as in Corollary \ref{persistence} for the inviscid case.
For, applying $\Gamma$ to \eqref{veqs} gives 
\be\label{vGeq}
(\partial_t -M)\Gamma \mathcal{V}=0,
\ee
with initial data $\mathcal{V}\equiv 0$, whence we obtain $\Gamma \mathcal{V}=0$ for all $t\geq 0$ by well-posedness
of \eqref{vGeq}.
Recall that the standard viscous existence/stability theory concerns smooth ($H^s$) solutions \cite{Z1}.

\br\label{viscMrmk}
In the case of the MHD $\beta$-model, $Mz= -\beta \partial_{x_1} z + \nu \Delta z$, and $M\Gamma \mathcal{V}= -\beta \partial_{x_1}
(\rm{div} h) + \nu \Delta ({\rm div} h)$, so that solutions of \eqref{veqs} satisfy
$$0= (\partial_t- M\Gamma \mathcal{V}= \partial_t ({\rm div} h) +
+\beta \partial_{x_1} (\rm{div} h) + \nu \Delta ({\rm div} h).
$$
For the hybrid model of \cite{BT2,MeZ2,FT}, solutions satisfy
$ 0= \partial_t ({\rm div} h) + \nabla \cdot (u (\rm{div} h)) + \nu \Delta ({\rm div} h) $,
giving persistence by a similar argument to the above \cite{MeZ2}.
\er

\subsubsection{Compatibility of shock profiles}\label{s:vshockcompatibility}
Likewise, similarly as in Proposition \ref{shockprop}, we find 
for smooth viscous shock profiles
\be\label{vprof}
\mathcal{V}(x,t)=\overline{\mathcal{V}}(x\cdot k -st),
\qquad \lim_{z\to \pm \infty} 
\overline{\mathcal{V}}(z)=\mathcal{V}_\pm,
\ee
$|k|=1$, that
shock profiles of the unconstrained system automatically satisfy
the constraint.  For, applying $\Gamma$ to the traveling-wave ODE $ -s k\cdot \nabla \mathcal{V} +\mathcal{F}(\mathcal{V})=0$,  rearranging, and integrating from $-\infty$ to $z$ gives 
$$
\gamma '= \mathcal{N}(k) \gamma
$$
where $\gamma= \Gamma \mathcal{V}= \Gamma(k) \overline{\mathcal{V}}'$ and 
$\mathcal{N}(k)= (\mathcal{M}^2(k))^{-1}(\mathcal{M}^1(k)-s)$ is constant-coefficient.
(Here, $\mathcal{M}^2(k)(k)$ is invertible by parabolicity.)
Noting that $\gamma(z)\to 0$ as $z\to \pm \infty$ by convergence of the shock profile to endstates
$\mathcal{V}_\pm$, we find that $\gamma \equiv 0$, since there are no nontrivial solutions of a 
constant-coefficient ODE decaying at both $\pm \infty$.

\subsubsection{Compatibility of normal modes}\label{s:vnormalcompatibility}
Restricting without loss of generality to the case of a zero-speed planar shock in direction $x_1$,
$\mathcal{V}=\overline{\mathcal{V}}(x_1)$, linearizing, and taking the Fourier transform in transverse directions
$x_2,\dots, x_d$, we obtain \cite{ZS,Z1,Z2} the generalized eigenvalue, or normal modes equation
\ba\label{vint}
0= (\lambda  -\mathcal{L}(\xi)) \widehat{\mathcal{V}}&=
\lambda \widehat{\mathcal{V}} + (\mathcal{A}_1^{\pm} \widehat{\mathcal{V}})_z + 
\sum_{j= 2}^d i\xi_j \mathcal{A}_j^{\pm} \widehat{\mathcal{V}}\\
&=
	  (\mathcal{B}_{1,1} \widehat{\mathcal{V}}_z)_z
	  +\sum_{j=2}^{d} i\xi_j \mathcal{B}_{1,j} \widehat{\mathcal{V}}_z 
	  \quad +\sum_{i=2}^{d}i \xi_i  (\mathcal{B}_{i,1} \widehat{\mathcal{V}})_z 
	  +\sum_{i,j=2}^{d}-\xi_i \xi_j \mathcal{B}_{i,j} k_i k_j  \widehat{\mathcal{V}},
\ea
with $(\xi_2, \dots, \xi_d)\in \R^{d-1}$ corresponding to Fourier frequencies, 
and existence of decaying solutions $\widehat {\mathcal{V}}$ for $\mathrm{Re}(\lambda)>0$ corresponding to
exponential linear instability.

Denoting by $\widehat {\mathcal{M}}(\xi)$ the Fourier transform in directions $x_2, \dots, x_d$ of $\mathcal{M}$,
we have from \eqref{vinv}, evidently, $\hat \Gamma \widehat{\mathcal{L}}= -\widehat{\mathcal{M}}\hat \Gamma$,
whence normal modes, similarly as in the inviscid case, satisfy also the constant-coefficient ODE
$$
(\lambda + \widehat{\mathcal{M}})(\hat \Gamma \widehat{\mathcal{V}})=0.
$$
By the absence of nontrivial solutions of constant-coefficient ODE that decay at both $\pm \infty$, we find
immediately as in Corollary \ref{sat} for the inviscid case,
that {\it decaying normal modes automatically satisfy the constraint}, i.e.,
$\hat \Gamma \widehat {\mathcal{V}}=0$.

\subsubsection{Weak and strong Evans-Lopatinski stability}\label{s:vweak}
In general, \eqref{weakdiss} is assumed to hold at the endstates $\mathcal{V}_\pm$ of a shock,
whence, by a standard Hersch-type lemma \cite{ZS,Z1,Z2}, one obtains ``consistent splitting'' of the
eigenvalue equations \eqref{vint} on $\{(\xi,\lambda): \, \mathrm{Re}(\lambda) \geq 0\} \setminus \{0,0\}$, that is, the property
that the dimensions of the subspaces of decaying solutions at $+\infty$ and $-\infty$ of \eqref{vint}
sums to the total dimension $N$ of the solution space, as do the dimensions of the subspaces of exponentially growing
solutions at $+\infty$ and $-\infty$.  It follows that one may define an Evans function $\mathcal{D}$
consisting of the Wronskian
of $N$ solutions comprising bases of the decaying solutions at $\pm\infty$, with vanishing of $\mathcal{D}(\xi,\lambda)$
corresponding to existence of a decaying normal mode for $\xi,\lambda$.
Following \cite{ZS,Z1,Z2,GMWZ6}, we define strong Evans-Lopatinsky stability \cite{ZS,Z1,Z2,GMWZ6} 
as the absence of decaying normal modes- or, equivalently,
zeros of the Evans function $\mathcal{D}$- on $\{(\xi,\lambda):\, \mathrm{Re}(\lambda) \geq 0\} \setminus \{0\}$, and
weak Evans-Lopatinsky stability as the absence of zeros of the Evans function on $\{(\xi,\lambda):\, \mathrm{Re}(\lambda) >0\}$.
Failure of weak Evans-Lopatinski stability is also called strong Evans-Lopatinski instability, implying
exponential instability of the background shock.

From the definition, and the observations in Section \ref{s:vnormalcompatibility}, we obtain immediately
the viscous analogs of Corollaries \ref{weakstab} and \ref{weakstabMHD}, showing that {\it both weak and strong
Evans-Lopatinsky stability hold for the $\beta$- or hybrid model of MHD with constraint if and only if they
hold without constraint.}

\subsubsection{Uniform Evans-Lopatinsky stability and nonlinear stability}\label{s:vuniform}
The uniform Evans-Lopatinsky condition, analogous to the uniform Lopatinsky condition of the inviscid case,
is a uniform lower bound $|\mathcal{D}(\xi,\lambda)|\geq \theta /|(\xi,\lambda)|$, $\theta>0$, on
$\{(\xi,\lambda):\, \mathrm{Re}(\lambda) \geq 0\} \setminus \{0\}$.
In nice cases, in particular, for fast Lax shocks in MHD, this is equivalent to strong Evans-Lopatinsky stability
plus transversality of the shock profile as a solution of the traveling-wave ODE, plus satisfaction of the uniform
Lopatinsky condition for the associated inviscid shock; moreover, uniform Evans-Lopatinsky stability is equivalent
to nonlinear stability of the underlying planar shock both with respect to large time and small viscosity asymptotics
\cite{Z1,Z2,GMWZ6}.

In these favorable cases, uniform Evans-Lopatinsky stability with constraint is evidently (from our previously remarked oobservations, above) equivalent to uniform Evans-Lopatinsky stability without constraint.
On the other hand, the relation between uniform Evans-Lopatinsky stability and uniform stability of the associated inviscid shock has not been shown for slow MHD shocks; whether or not this holds has been cited in \cite{GMWZ6} as an
important open problem in the nonlinear stability theory for MHD shocks.
Thus, in the present setting, issues pertaining to uniform Evans-Lopatinski, nonlinear stability, and their relations,
remain unclear.
As our main object is {\it instability}, far from the origin $(\xi, \lambda)=(0,0)$,
this is no obstruction in the present study.  However, we mention it as an important aspect of the theory for further
development.

\section{Inviscid stability analysis: the Lopatinsky determinant}\label{Lopatinski_analysis}

The main goal in this section is to introduce the Lopatinsky determinant, which will be the main tool used in the study of inviscid stability. 
Initially we derive some properties of the shock type. 
Recall that shocks are categorized as Lax, undercompressive and compressive depending on the number of characteristics entering the shock. Lax shocks are further categorized by their characteristic field: the unique family entering on both sides. To begin with, we obtain a useful parametrization of the shocks we study. 

Recall that the Rankine-Hugoniot conditions \eqref{genRH} specialized to a stationary shock 
$ \mathcal{W}(x,t)=\overline{\mathcal{W}}(x_1)= \mathcal{W}_\pm $ in the $x_1$ direction
of the MHD equations \eqref{eq1} are 
\begin{eqnarray}\label{RH_shock}
[f_1(\overline{\mathcal{W}})] = \left[\begin{array}{c}
                                \brh \,\overline{u}_1 \\
                                \brh \,\overline{u}_1^2 - \frac{\overline{h}_1^2}{2}  + a\brh^{\gamma} \\
                                \brh \,\overline{u}_1 \overline{u}_2 - \overline{h}_1 \overline{h}_2  \\
                                \beta \overline{h}_1 \\
                                \overline{u}_1 \overline{h}_2 - \overline{u}_2\overline{h}_1
                               \end{array} \right] =0,
\end{eqnarray}
where $[f_1(\overline{\mathcal{W}})] := f_1(\overline{\mathcal{W}}^+) - f_1(\overline{\mathcal{W}}^-)$.  


\begin{lemma}[Parametrization of MHD planar shocks with zero speed]\label{parametrization_lemma} Let $$\mathcal{W}(x_1, x_2, t)  = \mathcal{W}^{\pm}(x_1) = (\rho^{\pm},u_1^{\pm}, u_2^{\pm},h_1^{\pm}, h_2^{\pm})(x_1),$$  $x_1 \gtrless 0$,  be a planar shock solution satisfying the Rankine-Hugoniot conditions in \eqref{RH_shock}. Assume that the shock is parallel, i.e.,  $u_2^{+} =0$ and $h_2^{+}=0$. 

\begin{enumerate}[label=(\roman*), ref=\thetheorem(\roman*)]
\hangindent\leftmargin
\item One can parametrize the slow shocks connecting to the state $(\brh^{+},u_1^{+}, 0,h_1^{+}, 0)$ to the right (i.e., $x_1>0$) using variables $R$ and $M$, defined as
 \begin{eqnarray} \label{R_M-parametrization}
 R = \frac{\rho^+}{\rho^-} = \frac{u_1^-}{u_1^+} \quad \mbox{and} \quad    M^2 = \frac{R^{\gamma} -1}{\gamma R^{\gamma}(R -1)}, 
\end{eqnarray}
where $M^2 = \frac{(u_1^+)^2}{p_{\rho}(\rho^+)}$ ($M$ is also known as the \textit{downstream  Mach number}), and  $u_2^-=0$, $h_2^- = 0$, $h_1^+ = h_1^-$;
\item\label{parametrization_lemma:type_of_shock}  For large magnetic field,  slow shocks, i.e., 2-shocks, are characterized by  $R >1$ and $M <1$; 

\item \label{parametrization_lemma:transition_parameter} There exist two numbers 
\begin{align*}
 H_* = u_1^+\sqrt{\rho^+}&, \qquad H^* = u_1^-\sqrt{\rho^-} ,
\end{align*}
 according to which three scenarios are possible: \textit{fast} Lax shocks (i.e., extreme, or gas-dynamical type) for $0\leq |h_1| \leq H_*$, \textit{intermediate} shocks for $H_* \leq |h_1| \leq H^*$ and \textit{slow} shocks  for $H^* \leq |h_1|$ (in particular $|h_1|\to \infty$). 
   \end{enumerate} 
\end{lemma}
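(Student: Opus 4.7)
The plan is to unpack the Rankine--Hugoniot jumps \eqref{RH_shock} in order under the parallel ansatz $u_2^+=h_2^+=0$, then read off the Lax classification by counting incoming characteristics at the endstates.

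First I would use the fourth jump $[\beta h_1]=0$ (with $\beta\neq 0$; cf.\ Corollary~\ref{shockMHD}) to extract $h_1^+=h_1^-$, and the mass jump $[\rho u_1]=0$ to extract $R:=\rho^+/\rho^-=u_1^-/u_1^+$. Substituting $u_2^+=h_2^+=0$ into the transverse momentum jump $[\rho u_1u_2-h_1h_2]=0$ and the induction jump $[u_1h_2-u_2h_1]=0$ yields the homogeneous system
\begin{equation*}
\begin{pmatrix} \rho^-u_1^- & -h_1^-\\ -h_1^- & u_1^-\end{pmatrix}\begin{pmatrix} u_2^-\\ h_2^-\end{pmatrix}=\begin{pmatrix}0\\0\end{pmatrix},
\end{equation*}
whose determinant $\rho^-(u_1^-)^2-(h_1^-)^2$ vanishes exactly when $|h_1^-|=u_1^-\sqrt{\rho^-}=H^*$; away from this resonance I conclude $u_2^-=h_2^-=0$. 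With $h_1^+=h_1^-$ and parallelism on both sides, the $x_1$-momentum jump $[\rho u_1^2-h_1^2/2+a\rho^\gamma]=0$ loses its magnetic contribution and reduces to $\rho^+(u_1^+)^2+a(\rho^+)^\gamma=\rho^-(u_1^-)^2+a(\rho^-)^\gamma$. Writing $m:=\rho^+u_1^+=\rho^-u_1^-$ and eliminating $\rho^-=\rho^+/R$ yields $(u_1^+)^2=a(\rho^+)^{\gamma-1}(R^\gamma-1)/(R^\gamma(R-1))$; dividing by $p_\rho(\rho^+)=a\gamma(\rho^+)^{\gamma-1}$ gives the claimed formula for $M^2$, finishing (i).

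For (ii), parallel MHD in $(\rho,u_1)$ decouples as isentropic gas dynamics with $h_1\equiv\mathrm{const}$, and the admissible Lax slow-family shock is compressive, so $R>1$. The remaining inequality $M<1$ reduces to $g(R):=\gamma R^{\gamma+1}-(\gamma+1)R^\gamma+1>0$ on $R>1$, which is immediate from $g(1)=0$ together with $g'(R)=\gamma(\gamma+1)R^{\gamma-1}(R-1)>0$.

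For (iii), I would count incoming characteristics at each endstate. In the parallel ansatz, the five eigenvalues of $\mathcal{A}_1$ at $\mathcal{W}_\pm$ split into a gas-dynamical triple $\{u_1^\pm,\ u_1^\pm\pm\sqrt{p_\rho(\rho^\pm)}\}$ acting on $(\rho,u_1,h_1)$ and a transverse Alfv\'en pair $\{u_1^\pm\pm|h_1|/\sqrt{\rho^\pm}\}$ acting on $(u_2,h_2)$. A mode at $\mathcal{W}^+$ (resp.\ $\mathcal{W}^-$) is incoming to the stationary shock when its sign is negative (resp.\ positive). Using (ii), the gas-dynamical contribution gives a fixed $1+3=4$ incoming modes. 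The Alfv\'en contribution is controlled by the sign of $u_1^\pm-|h_1|/\sqrt{\rho^\pm}=(u_1^\pm\sqrt{\rho^\pm}-|h_1|)/\sqrt{\rho^\pm}$, which flips precisely as $|h_1|$ crosses $H_*=u_1^+\sqrt{\rho^+}$ or $H^*=u_1^-\sqrt{\rho^-}$; since $R>1$ gives $H_*<H^*$, the three regimes $|h_1|<H_*$, $H_*<|h_1|<H^*$, and $|h_1|>H^*$ produce total incoming counts $6$, $7$, and $6$ respectively. By Table~\ref{table:shock_types}, these identify fast Lax, overcompressive (intermediate), and slow Lax shocks. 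The main obstacle here is carefully tracking the Alfv\'en-mode signs at both endstates relative to $s=0$; with that bookkeeping complete, (iii) follows.
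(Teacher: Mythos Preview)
Your proof is correct and follows essentially the same approach as the paper: read off the Rankine--Hugoniot jumps for (i), verify the inequality for $M^2$ directly for (ii), and count characteristics at the endstates for (iii). One small correction in (iii): since the lemma is stated for the $\beta$-model, the spectrum of $A_0^{-1}A_1$ is $\{u_1\pm\sqrt{p_\rho},\ \beta,\ u_1\pm h_1/\sqrt{\rho}\}$, with the $h_1$-mode carrying the eigenvalue $\beta$ and decoupled from $(\rho,u_1)$; your ``gas-dynamical triple'' $\{u_1,\,u_1\pm\sqrt{p_\rho}\}$ acting on $(\rho,u_1,h_1)$ is the hybrid-model spectrum instead. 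This does not affect the classification, since $\beta>0$ and $u_1^\pm>0$ have the same sign on both sides, so your incoming-characteristic counts $6,7,6$ and the resulting shock types are unchanged. (Note also that Table~\ref{table:shock_types} records positive-eigenvalue counts $(L,R)$ rather than total incoming characteristics; the two bookkeepings are equivalent.)
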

 
We remark that  in the context of 3D MHD, the 2-D slow shocks seen here 
become $3$-shocks
\begin{proof}
Considering the jump conditions in \eqref{RH_shock} we readily observe that the ratios $\frac{\rho^+}{\rho^-}$ and  $\frac{u_1^-}{u_1^+}$ are in fact equal, so that one can define the variable $R$ as in \eqref{R_M-parametrization}.  Both properties $u_2^-=0$, $h_2^- = 0$ follow from solving the third and fifth rows of the jump conditions in \eqref{RH_shock}; the fourth equation in \eqref{RH_shock} implies that $h_1^+ = h_1^-$. Setting $M := \frac{u_1^+}{\sqrt{p_{\rho}(\rho^+)}}$, we obtain the rightmost condition in \eqref{R_M-parametrization} from the second relation in the jump condition \eqref{RH_shock}. This establishes (i). \newline
With regards to (ii), classical compressibility conditions in gas dynamics, namely $p_{\rho}(\cdot) >0$,  gives that  $R>1$; by inspection of \eqref{R_M-parametrization}, one can see that this condition implies that $0 \leq M <1$, for $\gamma \geq 1$.

Last, we prove (iii). The type of the shock is determined by the number of incoming characteristics, or
	eigenvalues of $A_0^{-1}A_1$, in the $x_1$ direction on either side of
	the shock, where $A_j$ are as in \eqref{eq_linearized}.
	Computing $A_0^{-1}A_1$ and its spectrum on both sides of the shock, we have: 
\begin{equation*}
 A_0^{-1}A_1 = \left(\begin{array}{ccccc}
u_{1} & \rho & 0 & 0 & 0 \\
\frac{p_{\rho}}{\rho} & u_{1} & 0 & 0 & 0 \\
0 & 0 & u_{1} & 0 & - \frac{h_{1}}{\rho} \\
0 & 0 & 0 & \beta & 0 \\
0 & 0 & -h_{1} & 0 & u_{1}
\end{array}\right), \qquad \sigma\left((A_0^{-1}A_1)^{\pm}\right) = \left\{u_1^{\pm} \pm\sqrt{p_{\rho}^{\pm}}, \beta, u_1^{\pm} \pm \frac{h_1}{\sqrt{\rho}^{\pm}}\right\}.
\end{equation*}
In the large magnetic field scenario  $h_1 \to \infty$, $\beta >0$ and a zero speed shock we count 2 negative/3 positive (resp. 1 negative/4 positive) eigenvalues for $x_1 >0$ (resp., $x_1 <0$) whenever $u_1^{\pm} \pm\sqrt{p_{\rho}^{\pm}} \lessgtr 0$.  Using the definitions in \eqref{R_M-parametrization} we conclude  that this is equivalent to $M<1$ when we use  the constraints for  $x_1 >0$; similarly, we derive that $R>1$   using the constraints for  $x_1 <0$. Therefore, the discussion above says that we just need to analyze the signs of $1 - \frac{h_1}{u_1^+\sqrt{\rho^+}}$ and $ 1 - \frac{h_1}{u_1^-\sqrt{\rho^-}} = 1 - \frac{h_1}{u_1^+\sqrt{R \rho^+}}$. As $R>1$ we have 
$$ 
1 - \frac{h_1}{u_1^+\sqrt{\rho^+}} <  1 - \frac{h_1}{u_1^-\sqrt{\rho^-}}  =  1 - \frac{h_1}{u_1^+\sqrt{R\,\rho^+}}.
$$
Therefore, recalling $R = \frac{u_1^-}{u_1^+}$, we obtain the following table  corresponding with \eqref{h_lower_upper_star}:

\begin{table}[h]
\begin{center}
\begin{tabular}{|c|c|c|}
\hline
Parameter range & Num of positive e-vals (Left,Right) & Shock type\\
\hline
\begin{minipage}[c][1.3cm]{0.3\textwidth}
\begin{center}
$ h_1 > H^*:= u_1^-\sqrt{\rho^-}$ 
\end{center}
\end{minipage}    & \begin{minipage}[c][1.3cm]{0.3\textwidth}
		      \begin{center}
		      (4,3) 
		      \end{center}
		      \end{minipage}                               & \begin{minipage}[c][1.3cm]{0.3\textwidth}
									\begin{center}
									Lax 2-shock (Slow)
									\end{center}
									\end{minipage}  \\
 \hline
\begin{minipage}[c][1.3cm]{0.3\textwidth}
\begin{center}
 $ h_1<H_*:=u_1^+\sqrt{\rho^+}$ 
\end{center}
\end{minipage}    & \begin{minipage}[c][1.3cm]{0.3\textwidth}
		      \begin{center}
		       (5,4)
		      \end{center}
		      \end{minipage}                               & \begin{minipage}[c][1.3cm]{0.3\textwidth}
									\begin{center}
									 Lax 1-shock (Fast)
									\end{center}
									\end{minipage}  \\									
\hline
\begin{minipage}[c][1.3cm]{0.3\textwidth}
\begin{center}
 $ H_*<h_1<	H^* $  
\end{center}
\end{minipage}    & \begin{minipage}[c][1.3cm]{0.3\textwidth}
		      \begin{center}
		      (5,3)
		      \end{center}
		      \end{minipage}                               & \begin{minipage}[c][1.3cm]{0.3\textwidth}
									\begin{center}
									 Doubly overcompressive
									\end{center}
									\end{minipage}  \\
 \hline
\end{tabular}
\end{center}
\caption{Shock types in 2-D MHD.\label{table:shock_types}}
\end{table}
\end{proof}
Notice that whenever $\beta >0$ the $\beta$-model preserves slow shocks, but this property can also be verified whenever 
a positive multiple  of $\mathrm{div}(h)$ is added to any upstream/downstream side of the propagating shock. 
In particular, one may add different multiples to upstream and downstream sides, as for example in \cite{MeZ2,FT},
allowing one to symmetrize the equations while preserving 2-shock structure.\footnote{
Nonconstant $\beta$ is inconvenient however for the viscous case, destroying conservative structure.}|
Last, we remark that the case $\beta =0$ is degenerate, for the matrix $A_1$ is not invertible in this case; the invertibility of $A_1$ is necessary in the construction of the Lopatinsky determinant, which we discuss next.

\subsection{Lopatinsky determinant: construction and asymptotic analysis}\label{sec:lop_det_constr} In this section we study the Lopatinsky determinant associated 
with inviscid parallel shocks and the onset of instability.  Initially, we study the behavior of the system \eqref{eq_linearized} by taking its Laplace-Fourier transform (in $t$ and  $x_2$, respectively). We obtain
\begin{eqnarray} \label{eq4_1}
\lambda A_0^{\pm}v + A_1^{\pm}v_{x_1} + i \xi A_2^{\pm}v= 0  \implies v_{x_1} = - (A_1^{\pm})^{-1}\left(\lambda A_0^{\pm} +  i \xi A_2^{\pm}\right)v, 
\end{eqnarray}
where $v$ is the Laplace-Fourier transform of $u$ and 
\begin{equation*}
	- (A_1^{\pm})^{-1}\left(\lambda A_0^{\pm} +  i \xi A_2^{\pm}\right)= \left(\begin{array}{ccccc}
-\frac{\lambda u_{1}}{u_{1}^{2} - p_{\rho}} & \frac{\lambda \rho}{u_{1}^{2} - p_{\rho}} & -\frac{i \, \rho u_{1} \xi}{u_{1}^{2} - p_{\rho}} & 0 & 0 \\
\frac{\lambda p_{\rho}}{{\left(u_{1}^{2} - p_{\rho}\right)} \rho} & -\frac{\lambda u_{1}}{u_{1}^{2} - p_{\rho}} & \frac{i \, p_{\rho} \xi}{u_{1}^{2} - p_{\rho}} & 0 & 0 \\
-\frac{i \, p_{\rho} u_{1} \xi}{\rho u_{1}^{2} - h_1^{2}} & 0 & -\frac{\lambda \rho u_{1}}{\rho u_{1}^{2} - h_1^{2}} & -\frac{i \, h_1 u_{1} \xi}{\rho u_{1}^{2} - h_1^{2}} & -\frac{h_1 \lambda}{\rho u_{1}^{2} - h_1^{2}} \\
0 & 0 & -\frac{i \, h_1 \xi}{\beta} & -\frac{\lambda}{\beta} & -\frac{{\left(i \, \beta - i \, u_{1}\right)} \xi}{\beta} \\
-\frac{i \, h_1 p_{\rho} \xi}{\rho u_{1}^{2} - h_1^{2}} & 0 & -\frac{h_1 \lambda \rho}{\rho u_{1}^{2} - h_1^{2}} & -\frac{i \, h_1^{2} \xi}{\rho u_{1}^{2} - h_1^{2}} & -\frac{\lambda \rho u_{1}}{\rho u_{1}^{2} - h_1^{2}}.
\end{array}\right),
\label{eq:A_mat_lop}
\end{equation*}
where the variables $(u_1, \rho, p_{\rho})$ should be read $(u_1^+, \rho^+, p_{\rho^+})$ (resp., $(u_1^-, \rho^-, p_{\rho^-})$) whenever $x_1 >0$ (resp. $x_1 <0$). Equation \eqref{eq4_1} consists of two systems of ODEs in the interior: one in  $x_1 > 0$, one in $x_1 < 0$,  coupled through Rankine-Hugoniot conditions at $x_1 =0$ (\textbf{Appendix \ref{Rankine}}; see also \cite{ZS} for further discussion on the technique). The Lopatinsky determinant is defined as
\begin{equation} \label{lopatinski_determinant}
 \Delta(\lambda, \xi) =\det(A_1^+\mathscr{E}^+, A_1^-\mathscr{E}^-, \lambda[f_0] + i\xi[f_2])\Big|_{x_1=0}.
\end{equation}
The parameter $\lambda$ is a spectral parameter indicating solutions to the system \eqref{eq_linearized} with growth $\sim e^{\lambda t}$ (thus, $\mathrm{Re}(\lambda) >0$ corresponding to instability);  $\mathscr{E}^{\pm}(\lambda, \xi)$ denote manifolds of (spatial) decaying  solutions in $x_1 \gtrless 0 $. In the rest of the paper we omit the dependence of these spaces on $\lambda$ and $\xi$, simply writing $\mathscr{E}^{\pm}$. 

\subsubsection{Large-$h_1$ asymptotics}

Following \cite{FT, BT1} we now study the large magnetic field $\overline{h}_1$ asymptotics. It is convenient to define the quantity $\displaystyle{\eps := \frac{1}{\overline{h}_1}}$, which parametrizes the underlying viscous profile $\overline{\mathcal{V}}=\overline{\mathcal{V}}^{(\eps)}$. 
Our analysis consists of Taylor expanding the roots of the Lopatinsky determinant defined in \eqref{lopatinski_determinant} considered as a function in $\eps$ with $\xi$ held fix at $1$, i.e., $\eps \mapsto \lambda(\eps)$. It is shown that
 \begin{equation}\label{compare_with_FT}
\lambda(\eps) = \lambda_2 \eps^2 + \mathcal{O}(\eps^3)  
 \end{equation}
 where $\lambda_2 >0$. One can conclude that an unstable regime occurs in the large magnetic field scenario, as verified in \cite{FT}.

The study of the spaces $\mathscr{E}^{\pm}$ is equivalent to  analyzing the eigenvalues of  $-(A_1^{\pm})^{-1}(\lambda A_0^{\pm} +i\xi A_2^{\pm})$ and their associated eigenspaces. The approach we adopt relies on careful analytical estimates allied with the use of symbolic computations (carried out in SAGE \cite{sage} \cite{github-BMZ}). The main idea is the following: assume that the spectral parameter $\lambda$ can be expanded as 
\begin{equation}\label{expansion_lambda}
\lambda = \lambda_0 + \lambda_1\eps + \lambda_2 \eps^2 + \ldots
\end{equation}
Thus each element $\mu \in \sigma(-(A_1^{\pm})^{-1}\left(\lambda A_0^{\pm} +  i \xi A_2^{\pm}\right))$ can be expanded as 
\begin{equation} \label{expansion_spectrum}
\mu = \mu(\lambda) =  \mu_0(\lambda_0) + \eps \mu_1(\lambda_0, \lambda_1) + \eps^2 \mu_2(\lambda_0, \lambda_1, \lambda_2) + \mathcal{O}(\eps^3), 
\end{equation}
Afterwards we find a set of eigenvectors of $-(A_1^{\pm})^{-1}\left(\lambda A_0^{\pm} +  i \xi A_2^{\pm}\right)$ spanning the spaces $\mathscr{E}^{\pm}$; namely, a mapping 
\begin{equation*} 
  \mu  \mapsto X^{\pm}(\mu): \sigma\left(-(A_1^{\pm})^{-1}\left(\lambda A_0^{\pm} +  i \xi A_2^{\pm}\right)\right) \to \mathscr{E}^{\pm}.
\end{equation*}
For the sake of 
convenience
we drop the indexes ``$\pm$'' for now, since these formulas work for both cases if one use the notation in \eqref{R_M-parametrization}. If we also expand  the eigenvector $X(\mu)$ in \eqref{expansion_eigenvector} in  $\eps$ terms we have
\begin{equation}\label{expansion_eigenvector}
 X(\mu) = X^{(0)}(\mu_0) + \eps X^{(1)}(\mu_0, \mu_1)+ \eps^2 X^{(2)}(\mu_0, \mu_1, \mu_2) +\ldots. 
\end{equation}

%
\begin{obs}\label{multilinearity_determinant_Lopatinski} With regards to the symbolic computations, a few remarks are in 
	order:
\begin{enumerate}[label=(\roman*)]\label{multilinearity_determinant_Lopatinski:numbered}
 \item \label{multilinearity_determinant_Lopatinski:numbered:1} In order to find the expansion in \eqref{expansion_spectrum} we  find the characteristic polynomial $p(\cdot)$ of the matrix 
$-(A_1^{\pm})^{-1}\left(\lambda A_0^{\pm} +  i \xi A_2^{\pm}\right)$ when \eqref{expansion_lambda} holds; we conclude by matching coefficients. The characteristic polynomial $p(\mu)$ can be shown to 
expand as 
$p(\mu) = p(\mu_0, \mu_1, \mu_2, \ldots) =  p_0(\mu_0) + \eps p_1(\mu_0, \mu_1) + \eps^2 p_2(\mu_0, \mu_1, \mu_2) + \ldots $,
with $p_j$ computed explicitly using \textit{SAGE}; 
\item  \label{multilinearity_determinant_Lopatinski:numbered:2}Higher order terms in the expansion \eqref{expansion_spectrum}  in terms of $\eps$ can be easily obtained using \textit{SAGE}, because once we have $\mu_0 \, \ldots \,  \mu_i$ the problem of finding $\mu_{i+1}$ is linear; these terms won't be written explicitly here though;
\item  \label{multilinearity_determinant_Lopatinski:numbered:3}It is not hard to show that  $- \frac{\lambda}{\beta}$ is an eigenvalue of 
	$-(A_1^{\pm})^{-1}\left(\lambda A_0^{\pm} +  i \xi A_2^{\pm}\right)$. This also holds true in  the nonparallel case, as we will show later;
\item  \label{multilinearity_determinant_Lopatinski:numbered:4}Notice that, upon scaling, we can take $\xi =1$ (redefine $\lambda \to \lambda \xi$);
\item  \label{multilinearity_determinant_Lopatinski:numbered:5}Let $X_i$ denote the $i^{th}$ column of a square matrix $A = [X_1, X_2, \ldots X_n]$ and $X_i(\eps) = X_i^{(0)} + \eps X_i^{(1)} + \eps^2 (X_i^{(2)}) + \mathcal{O}(\eps^3)$. Then 
\begin{equation*}
 det(A)  = A_0 + \eps A_1 + \eps^2 A_2 + \mathcal{O}(\eps^3),
\end{equation*}
where $A_k = \underset{a_1+\ldots + a_n = k }{\sum} det(X_1^{(a_1)}, \ldots, X_n^{(a_n)})$. 
\end{enumerate}
\end{obs}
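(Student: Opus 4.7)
My plan is to verify the items of Observation \ref{multilinearity_determinant_Lopatinski} in turn. Items (i), (ii), (iv), and (v) are essentially algebraic, while (iii) is the only one with geometric content and I would prove it via the involution structure developed in Section \ref{mathematical_settings}.

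For (i), I would start from the characteristic polynomial $p(\mu;\lambda,\eps):=\det\bigl(\mu I + (A_1^{\pm})^{-1}(\lambda A_0^{\pm}+i\xi A_2^{\pm})\bigr)$ and substitute the ansatzes \eqref{expansion_lambda} and \eqref{expansion_spectrum}. Since $h_1=1/\eps$ enters $A_0^{\pm}, A_1^{\pm}, A_2^{\pm}$ only polynomially, $p$ expands as a formal power series $p=p_0+\eps p_1+\eps^2 p_2+\dots$ whose coefficients are explicitly computable in \textit{SAGE}. Matching powers of $\eps$ gives $p_0(\mu_0;\lambda_0)=0$ at leading order and, for each $k\ge 1$, an equation of the form $p_0'(\mu_0)\,\mu_k+(\text{known terms in }\mu_0,\dots,\mu_{k-1})=0$, which at a simple root $\mu_0$ of $p_0$ solves uniquely for $\mu_k$; this simultaneously settles (ii).

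For (iii), I would avoid the brute-force $5\times 5$ determinant and instead appeal to Lemma \ref{weaksoln}(i). For the $\beta$-model we have by Remark \ref{rmkvalues} that $\mathcal{M}_1=\beta$ and $\mathcal{M}_2=0$, so Lemma \ref{weaksoln}(i) reads
\[
(\lambda+\beta\,\partial_{x_1})\bigl(\widehat{\Gamma}\widehat{\mathcal{V}}\bigr)=0
\]
for every $\widehat{\mathcal{V}}$ solving \eqref{int}. Setting $G:=-(A_1^{\pm})^{-1}(\lambda A_0^{\pm}+i\xi A_2^{\pm})$ and $L$ as in Lemma \ref{keyG}, the identity $(LG)_{\pm}=(HL)_{\pm}$ together with Corollary \ref{Ginv} shows that $\Range L_{\pm}^{*}$ (one-dimensional, since $L_{\pm}$ is rank one) is a left-invariant subspace of $G$ on which the restricted action equals $H=-\mathcal{M}_1^{-1}(\lambda+i\xi\mathcal{M}_2)=-\lambda/\beta$. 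Hence $-\lambda/\beta\in\sigma(G)$, with eigenvector precisely the ``constraint mode.'' The same argument applies unchanged in the nonparallel case, since the involution relations and the values $\mathcal{M}_1=\beta$, $\mathcal{M}_2=0$ do not depend on background values of $u_2$ or $h_2$.

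For (iv), the scaled change of variables $(\lambda,\xi,x_1)\mapsto(\xi\lambda,\xi,\xi^{-1}x_1)$ leaves \eqref{eq4_1} invariant, so $\Delta(\lambda,\xi)$ defined in \eqref{lopatinski_determinant} is homogeneous in $(\lambda,\xi)$, and fixing $\xi=1$ loses no information after rescaling $\lambda\to\lambda/\xi$. For (v), expanding each column $X_i(\eps)=\sum_{a_i\ge 0}\eps^{a_i}X_i^{(a_i)}$ and using multilinearity of $\det$ gives
\[
\det[X_1(\eps),\dots,X_n(\eps)]=\sum_{a_1,\dots,a_n\ge 0}\eps^{a_1+\cdots+a_n}\det\bigl[X_1^{(a_1)},\dots,X_n^{(a_n)}\bigr],
\]
and collecting by total degree $k=a_1+\cdots+a_n$ yields the stated formula. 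The main obstacle is organizational rather than conceptual: keeping the $\eps$-expansions of the various roots $\mu$ consistently matched to their eigenvector expansions \eqref{expansion_eigenvector} on both sides of the shock, so that the assembled columns of $\mathscr{E}^{\pm}$ produce the advertised asymptotic expansion for $\Delta$ leading to \eqref{compare_with_FT}.
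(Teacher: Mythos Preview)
Your proposal is correct. The paper states this as an Observation without proof, treating items (i)--(v) as self-evident computational remarks, so your detailed verification goes beyond what the paper provides.

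The one place where your route genuinely differs is (iii). The paper's phrasing (``it is not hard to show'', ``one can observe by inspection'') indicates a direct check on the explicit $5\times 5$ matrix displayed just above \eqref{eq4_1}. You instead invoke the involution machinery of Section~\ref{mathematical_settings}: since $m=1$, the relation $LG=HL$ of Lemma~\ref{keyG} reads $LG=(-\lambda/\beta)L$ with $L$ a single row vector, so $L$ is a left eigenvector of $G$ with eigenvalue $-\lambda/\beta$. This is more conceptual than brute inspection: it identifies $-\lambda/\beta$ as the constraint mode and makes the nonparallel extension immediate, because $\mathcal{M}_1=\beta$, $\mathcal{M}_2=0$ (Remark~\ref{rmkvalues}) are independent of the background values of $u_2,h_2$. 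The paper's direct approach would in principle require reinspecting the matrix in the nonparallel case; yours does not.
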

Taking into account these observations, we plug \eqref{expansion_lambda}, \eqref{expansion_spectrum} and \eqref{expansion_eigenvector} into \eqref{lopatinski_determinant}, to obtain
\begin{equation} \label{expansion_lopatinki_determinant}
  \Delta^{(\eps)}(\lambda,1) =
  \Delta_{0} + \eps \Delta_{1} + \eps^2 \Delta_{2} + \mathcal{O}(\eps^3).
\end{equation}
In what follows we shall exploit the multi-linearity of the determinant function in order to find the terms in this expansion; each term $\Delta_i$ is a function of $\left(\lambda_i\right)_{i\in \mathbb{N}},$ but this dependence will be most of the time omitted and made explicit as we carry out our computations. We make use of \eqref{expansion_lopatinki_determinant} to verify the condition $\Delta^{(\eps)}(\lambda, 1)\equiv 0$: we look for $(\lambda_i)_{i\in \mathbb{N}}$ that gives    $\Delta_{i}(\lambda) = 0$ for all $i \in \mathbb{N}.$
An important step in the analysis consists of an explicit representation of the manifolds $\mathscr{E}^{\pm}$ referred to in \eqref{lopatinski_determinant}, an investigation that we reformulate as the study of the eigenvalues of the matrices in \eqref{eq4_1}. Indeed, one can observe by inspection that  $- \frac{\lambda}{\beta}$ is an eigenvalue of  $-(A_1^{\pm})^{-1}\left(\lambda A_0^{\pm} +  i A_2^{\pm}\right)$, while an expansion of the eigenvalues of the form \eqref{expansion_spectrum} readily shows that their zeroth order terms $\mu_0$ for $x_1 \gtrless 0$ are
\begin{eqnarray*}
\underbrace{ \pm 1  + \mathcal{O}( \eps^2) }_{\gtrless 0},\quad \, \underbrace{- \frac{1}{u_1 + \sqrt{p_{\rho}}}(\lambda_0  + \eps \lambda_1)  + \mathcal{O}(\eps^2)}_{<0},\quad  \underbrace{ - \frac{1}{u_1 - \sqrt{p_{\rho}}}(\lambda_0  + \eps \lambda_1)  + \mathcal{O}(\eps^2)}_{>0},\quad  \underbrace{-\frac{\lambda}{\beta}}_{<0}, \end{eqnarray*}
where all the variables $(u_1, \rho, p_{\rho})$ should be read $(u_1^+, \rho^+, p_{\rho^+})$ (resp., $(u_1^-, \rho^-, p_{\rho^-})$) whenever $x_1 >0$ (resp. $x_1 <0$). 
Therefore, since we are looking for ``decaying'' manifolds we must have, for  $x_1 >0$, eigenspaces associated to the following eigenvalues:
\begin{equation*} \label{negative_eigenvalues}
 - 1  + \mathcal{O}( \eps^2),\quad  - \frac{1}{u_1^+ + \sqrt{p_{\rho^+}}}(\lambda_0  + \eps \lambda_1)  + \mathcal{O}(\eps^2),\quad \mbox{and}\quad -\frac{\lambda}{\beta}.
\end{equation*}
Analogously,  ``decaying'' manifolds in  $x_1 <0$ must be eigenspaces associated to the eigenvalue   $\displaystyle{1  + \mathcal{O}( \eps^2)}$.  Notice that the number of eigenvalues in each interior $x_1 \gtrless 0$ is consistent with the analysis derived from Hersh's Lemma, hence it suffices to  analyze the number of positive and negative eigenvalues of $- A_1^{-1}A_0$ .

\subsubsection{Asymptotic instability} 

Choosing an appropriate parametrization of the  decaying manifolds $\mathscr{E}^{\pm}$ one can show that $\Delta_0 =\Delta_1=0$ in equation \eqref{expansion_lopatinki_determinant}. A careful computation shows then that
$\Delta_{2} = \Delta_{2}(\lambda_0,\mu_0) = det(X_1^{(1)}, X_2^{(0)}, X_3^{(0)}, X_4^{(1)}, X_5^{(0)}).$
We have 
\begin{align*}
&(X_1^{(1)}, X_2^{(0)}, X_3^{(0)}, X_4^{(1)}, X_5^{(0)}) = \nonumber \\
&=
\left(\begin{array}{ccccc}
0 & E_* & 0 & 0 & -\frac{2 \, \lambda_{0} u_1^+(\rho^- - \rho^+)}{(u_1^+)^{2} - p_{\rho^+}} \\
0 & \frac{i \, \lambda_{0}^{3}}{{\left(u_1^+ + \sqrt{p_{\rho^+}}\right)}^{2} \rho^+} - \frac{i \, \lambda_{0}}{\rho^+} & 0 & 0 & \frac{\lambda_{0} (\rho^- - \rho^+)}{\rho^+} \\
-(\lambda_{0} - u_1^+)^2  & 0 & 0 & -\frac{u_1^-(\lambda_0 + u_1^+)}{u_1^+} (\lambda_0 + u_1^-) & 0 \\
i u_1^+(\lambda_{0} - u_1^+) & 0 & -\lambda_{0} & i u_1^-(\lambda_0 + u_1^-) & 0 \\
\lambda_{0}(\lambda_{0} - u_1^+)& 0 & i \, u_1^+ & \frac{\lambda_0 u_1^-}{u_1^+}(\lambda_0 + u_1^-) & 0
\end{array}\right),
\end{align*}
where
$\displaystyle{ E_*=\frac{i \, \lambda_{0} (u_1^+)^{2}}{{\left(u_1^+ + \sqrt{p_{\rho^+}}\right)} p_{\rho^+}} + \frac{i \, \lambda_{0}^{3} u_1^+}{{\left(u_1^+ + \sqrt{p_{\rho^+}}\right)}^{2} p_{\rho^+}} - \frac{i \, \lambda_{0}^{3} (u_1^+)^{2}}{{\left(u_1^+ + \sqrt{p_{\rho^+}}\right)}^{3} p_{\rho^+}} - \frac{i \, \lambda_{0} u_1^+}{p_{\rho^+}}.}$
The block structure of this matrix allow us to see right away that $\Delta_{2} = \mathcal{O}(\lambda_0^3)$ and  $\Delta_{2} = o(\lambda_0)$. There exists a $\lambda_0 = \mathcal{O}(1)$ satisfying $\Delta_{2} =0 $ at which, however, the parametrization of the manifolds $\mathscr{E}^{\pm}$ through their eigenvectors is lost because two of those eigenvectors coincide. These points are called \textit{glancing modes};  this issue was also pointed out in \cite[\S 3.4]{FT}. Resorting to  generalized eigenvectors one can show that the Lopatinsky determinant does not vanish for this value of $\lambda_0$, hence the only solution to $\Delta_{2} =0$ is $\lambda_0=0$; see also \cite{github-BMZ}.

In the search for roots of  $\Delta_{3}=0$ the now look for  $\lambda_1$. Thanks to the  multilinearity of the determinant function, we readily obtain that   $\Delta_3=0$. We go to the next term in \eqref{expansion_lopatinki_determinant}: 
thanks to Observation \ref{multilinearity_determinant_Lopatinski} it is not hard to see that  $ \Delta_4 = det(X_1^{(1)}, X_2^{(1)}, X_3^{(0)}, X_4^{(1)}, X_5^{(1)}).$ 
A computation shows that the latter determinant has order $\mathcal{O}(\lambda_1^2)$, but not order $o(\lambda_1^2)$, which  implies that 
$\lambda_1 =0.$
A similar analysis leads to the expression of the next term, that is, 
$$\Delta_5 = det(X_1^{(1)}, X_2^{(2)}, X_3^{(0)}, X_4^{(1)}, X_5^{(1)}).$$
Since the first two rows are linearly dependent, we can see that the determinant of the latter matrix is zero. A more involved analysis is necessary in dealing with the $\eps^{6}$-order term in \eqref{expansion_lopatinki_determinant}:  $\Delta_{6}$ can be written as
\begin{eqnarray*}
 \Delta_6 = det(X_1^{(2)}, X_2^{(2)}, X_3^{(0)}, X_4^{(1)}, X_5^{(1)}) + det(X_1^{(1)}, X_2^{(2)}, X_3^{(0)}, X_4^{(2)}, X_5^{(1)}) +\nonumber \\
 det(X_1^{(1)}, X_2^{(2)}, X_3^{(0)}, X_4^{(1)}, X_5^{(2)}) +det(X_1^{(1)}, X_2^{(3)}, X_3^{(0)}, X_4^{(1)}, X_5^{(1)}).
\end{eqnarray*}
A simple analysis shows that the last determinant is zero due to the structure of columns 1, 3, 4 and 5. We end up with
\begin{eqnarray*}
 \Delta_{6} = \frac{i \, {\left(R - 1\right)}^{2} M^{3} R^{3} \lambda_{2} \rho^- (u_1^+)^{7}}{M^{2} - 1} - \frac{2 \, {\left(i \, R - i\right)} M^{2} R \lambda_{2}^{2} (u_1^+)^{4}}{M - 1}.
\end{eqnarray*}
%
Setting $ \Delta_{6} =0$ and solving for $\lambda_2$, we obtain
%
\begin{equation}\label{finally_the_coefficient}
 \lambda_2 = \frac{M R^{3} \rho^- (u_1^+)^{3} - M R^{2} \rho^- (u_1^+)^{3}}{2 \, {\left(M + 1\right)}} = \frac{{\left(R - 1\right)} M R^{2} \rho^- (u_1^+)^{3}}{2 \, {\left(M + 1\right)}}.
\end{equation}
This function is clearly positive for all $R >1$, from where we readily conclude inviscid instability for all values of $\gamma\geq1$, a result that extends and improves the results of \cite[Page 3036]{FT}, which were limited to the case $\gamma \in [1,2]$. Furthermore, our result shows a higher level of accuracy when compared to the results of \cite{FT} (see also the Appendix \ref{Rankine}) and those of \cite{SE}; this improvement is clear once we compare the predicted analytically determined value for the instability of the Lopatinsky determinant with those values observed numerically, as discussed in Section \ref{viscous_evans_function}.  

\subsubsection{Comparison with previous results: parallel case} \label{comparison}

We investigate the Lopatinsky determinant and its roots numerically for different values of $\gamma$, corresponding to monoatomic gas ($\gamma = 5/3$) and  diatomic gas ($\gamma = 7/5$; for instance, $O^2$) , $\gamma = 3$ (artificial gas) and compare those values to the analytically predicted result in equation \eqref{finally_the_coefficient}; the comparison is shown in table \ref{inviscid_table}.
\begin{table}[h]
\begin{small}
\begin{tabular}{|c|c|c|c|c|c|c|c|c|c|c|}
\hline
 $\gamma$ $\backslash  R$ & 1.5 & 2 & 2.5 & 3 & 3.5 & 4 & 4.5 & 5 & 5.5 & 6 \\
\hline 
\hline
$\gamma = 5/4$, exact& 7.40e-2 & 1.01e-1 & 1.13e-1& 1.18e-1 & 1.20e-1&1.20e-1&1.19e-1& 1.18e-1 &1.16e-1&14e-1\\
\hline
$\gamma = 5/4$, est& 6.65e-2 &9.76e-2 & 1.10e-1& 1.18e-1 & 1.16e-1&1.16e-1&1.16e-1& 1.18e-1&1.13e-1& 1.10e-1\\
\hline
$\gamma = 5/4$, err & 1.01e-1 & 3.64e-2 &2.98e-2& 2.54e-4& 2.64e-2&2.72e-2&2.16e-2& 2.26e-3 &2.29e-2&3.52e-2\\
\hline
\hline
$\gamma = 7/5$, exact&7.33e-2&0.100& 1.11e-1& 1.15e-1&1.17e-1&1.17e-1&1.16e-1&1.14e-1&1.13e-1&1.11e-1\\
\hline
$\gamma = 7/5$, est&6.65e-2&9.97e-2&1.10e-1& 1.15e-1&1.16e-1&1.16e-1&1.14e-1&1.15e-1&1.10e-1&1.10e-1\\
\hline
$\gamma = 7/5$, err&9.44e-2&2.32e-3&8.06e-3& 6.05e-3&4.70e-1&3.36e-3&1.51e-2&4.11e-3&2.02e-2&6.98e-3\\
\hline
\hline
$\gamma = 5/3$ exact& 7.24e-2&9.77e-2&1.08e-1&1.11&1.12e-1& 1.12e-1& 1.11e-1&  1.09e-1&1.07e-1& 1.05e-1 \\
\hline
$\gamma = 5/3$ est& 6.92e-2&9.84e-2&1.09e-1& 1.12&1.16e-1&1.11e-1& 1.12e-1&112e-1&1.08e-1&1.05e-1  \\
\hline
$\gamma = 5/3$ err&4.37e-2&7.23e-3& 7.70e-3 &1.73e-3&3.43e-2&3.02e-3&1.01e-2&2.54e-2&4.65e-3&1.88e-3 \\
\hline
\hline
$\gamma = 3$ exact& 6.78e-2&8.77e-2&9.40e-2&9.53e-2& 9.47e-2&9.32e-2&9.13e-2&8.93e-2&8.73e-2& 8.54e-2  \\
\hline
$\gamma = 3$ est& 7.11e-2&9.41e-2&9.98e-2& 1.02e-1&1.02e-1&1.05e-1&1.01e-1&9.60e-2&9.84e-2&9.29e-2  \\
\hline
$\gamma$ = 3 err& 4.89e-2&7.30e-2&6.17e-2&7.26e-2&7.98e-2&1.23e-1&1.10e-1&7.50e-2&1.26e-1&8.86e-2   \\
\hline
\end{tabular}
\caption{ Comparison of exact coefficient with numerical coefficient. Parameters $u_1^- = 1$, $\rho^- = 1$ are fixed; recall that $u_1^+ = u_1^-/R$. For each value of $\gamma$ and $R$ we record the exact answer, the numerical estimate (est), and the relative error (err) between the two. Numerical estimates of the coefficient were determined by computing the roots $\lambda(h_1)$ of the Lopatinsky determinant for several values of $h_1\subset [2,16]$ and then using curve fitting of $\log(\lambda)$ and $\log(1/h_1)$. The maximum relative error is 4.70e-1 and the average relative error is 4.83e-2. \label{inviscid_table}}\end{small}\end{table}
Another representative description of the good agreement between the analytical result and the numerical study can be also seen in Figures \ref{fig158} and \ref{fig157}. In particular, Fig. \ref{fig157} points out the accuracy of our results when compared to those presented in \cite{FT}. However, it is worthwhile to stress that the analysis in the latter paper gives the correct order for the root of the Lopatinsky determinant, i.e., $\lambda = \mathcal{O}(\eps^2)$; apparently, this result was known at a formal level in the Astrophysics community since the late 80's; see for example  \cite[\S 2.1]{SE}.

\subsection{Full inviscid stability diagram}
In this section, following the scaling of \cite{BHZ1}, we fix  $u_1^-=1$, $\rho^-=1$ so that  $\rho^- u_1^- = \rho^+ u_1^+ =1$, where the latter is due to the Rankine-Hugoniot conditions \eqref{RH_shock}. With regards to the parametrization of Lemma \eqref{parametrization_lemma}, it consists of  $R = \frac{1}{u_1^+}$. Note for this choice of parameters that the slow shock classification of \eqref{h_lower_upper_star} simplifies to 
$$ h_1 >H^* =1.$$

We complete our study of parallel inviscid shock stability by a numerical stability analysis over all parameters, complementing the asymptotic study of the previous subsections. Recall \cite{FT} that 1-D stability, or nonvanishing of $\Delta(\lambda,1)$,  has previously been verified. Thus, without loss of generality, we may fix $\xi=1$ by homogeneity 
(see Observation \ref{multilinearity_determinant_Lopatinski}-\ref{multilinearity_determinant_Lopatinski:numbered:4}), reducing the question of stability to nonvanishing of $\Delta(\lambda, 1)$ on $\mathrm{Re}\left( \lambda\right) \geq 0$. Noting that $\Delta(\lambda, 1)$ is analytic in $\lambda$, this can be done by a winding number computation. Indeed, if we evaluate $\Delta(\lambda, 1)$ along a contour in the complex plane that encloses any possible unstable roots of $\Delta(\lambda, 1)$, and if the resulting image contour has winding number 0, then the associated shock is stable, and if the winding number is positive, then the shock is unstable. We numerically evaluate $\Delta(\lambda, 1)$ along the contour $\partial (\{z\in \C: \mathrm{Re}(z) \geq 0\}\cap \{z\in \C: |z| = r\}$ where $R>0$ is is sufficiently large. In practice, we took $r = 10$, which appears to be amply large. When $\mathrm{Re}(\lambda)=0$, the real part of the eigenvalues of \eqref{eq:A_mat_lop} collapse to zero making it difficult numerically to detect the correct bases for evaluating $\Delta(\lambda, 1)$. To get around this technical difficulty, we simply in practice shift to the right of the contour on which we compute $\Delta(\lambda, 1)$ by $1e-4$. As displayed in Fig. \ref{fig264}  (b), we examine inviscid stability for $u_1^+\in \{ 0.05, 0.1,...,0.9,0.95\}$ and $h_1\in \{1.1,1.2,...,3.9,4\}$.

The results displayed in Fig. \ref{fig264}-(b) indicate a single stability transition for each fixed $u_1^+$ sufficiently close to $u_1^-=1$ as $h_1$ is increased from $H^* =1$ (stability) to $\infty$ (instability). For smaller $u_1^+$, corresponding to larger-amplitude waves, all slow shocks appear to be multi-d unstable {\it independent of the strength of the magnetic field $h_1$,}  hence there is no stability transition.  Similarly as in the large-$h_1$ case, we observe through a winding number computation of the Evans function that in the  unstable case the instability corresponds to a double real root, so that the stability transition as described in the introduction corresponds to passage of a double root through the origin.

\subsubsection{The critical destabilization parameter.}\label{sec:destabilization} Based on the above observations, to pinpoint the location of the stability transition $h_1$ for a given fixed $u_1^+$, we have only to numerically solve $\Delta(0,1)=0$, considered as an equation in $h_1$. (Note that, as a consequence of reflection symmetry,  $\Delta(\lambda, \xi)$ may be normalized to be real valued for $\lambda \in \mathbb{R}$).
Again, to avoid technical difficulties to do with pure imaginary $\lambda$, we solve the approximate equation $\Delta(\lambda_0, 1)=0$ where $\lambda_0 = 1e-5$ via the bisection method, where $\Delta(\lambda, 1)$ is normalized by $\Delta(0.1,1) = 1$.  The result is displayed in Fig. \ref{fig264}(a)(b) together with the results of our more complete coarse-mesh computations. The thick dashed line in Fig. \ref{fig264} marks the critical destabilization parameter. 
Apparently the stability region is exclusively determined by this critical destabilization parameter curve.

\subsection{Remarks on the nonparallel case}

Our approach to the nonparallel case relies on numerical winding number computations, using  similar techniques to those used in the  parallel case: we first compute an eigenbasis associated to decaying manifolds on both $ x_1 \gtrless 0$ sides, and later expand  their entries in $\eps$ in order to obtain an expression for the Lopatinsky determinant $\Delta$ in terms of $\eps$ as in \eqref{expansion_lopatinki_determinant}. One obtains an expansion of the type
%
\begin{eqnarray}\label{lop_nonparallel}
 \Delta^{(\eps)}(\lambda,1) = \frac{1}{\eps^6}\Delta_{-6} + \frac{1}{\eps^5}\Delta_{-5} + \frac{1}{\eps^4}\Delta_{-4} + \ldots
\end{eqnarray}
with $\lambda = \lambda_0 + \lambda_1 \eps + \mathcal{O}(\eps^2)$. It is not hard to see that $\Delta_{-6} = \Delta_{-5} = 0$, since both corresponding matrices have rows of zeros. As before, in order to find instability we need to show that there exists a solution to  $\Delta_{-4} = \Delta_{-4}(\lambda_0) = 0$ such that $Re(\lambda_0) >0$.
The analytical study of this determinant is very complicated, even if we use symbolic computations, so we approach this part numerically: we verify numerically using a winding number computations that  the Lopatinsky determinant \eqref{lop_nonparallel} has a simple  root $\lambda = \lambda_0 + \lambda_1 \eps + \mathcal{O}(\eps^2)$ such that $\lambda_0$  has a positive real part, so we do have instability. This result confirms and 
elucidates the assertions in \cite[Remark 3.5]{FT} regarding the nonparallel case. In particular, that the critical eigenvalue is order $\mathcal{O}(1)$ rather than order $\mathcal{O}(\eps^2)$ as in the parallel case. However, the root we found does not agree with the explicit formula given in \cite[Remark 3.5]{FT};  as the  proof of this formula is not given in \cite{FT} we are unable to determine the reason for this discrepancy.

Besides these asymptotics, for some specific shocks we also carried out a numerical Lopatinsky analysis, as one can see from the results displayed in Fig. \ref{fig183} that in the nonparallel case the roots are not real valued. This appearance of complex roots corresponds to a break of $\mathcal{O}(2)$ symmetry upon linearization (see also \cite{Mo,JYZ,FrKSch}).

\begin{SCfigure}
 \centering
\caption{Plot of the root of the non-parallel MHD Lopatinsky determinant as a function of $u_2^+$ when $u_1^+  = 0.9$, $\rho^+ = 1.1$, $u_1^- = 1$, $\rho^- = 1$, $\gamma = 5/3$, $h_1 = 5$, $h_2^+ = u_3^+ = h_3^+ = 0$,  and we vary $u_2^+$. The root is approximately $r = 9\times 10^{-4}-i u_2^+$.  }
\label{fig183}
\includegraphics[scale=0.4]{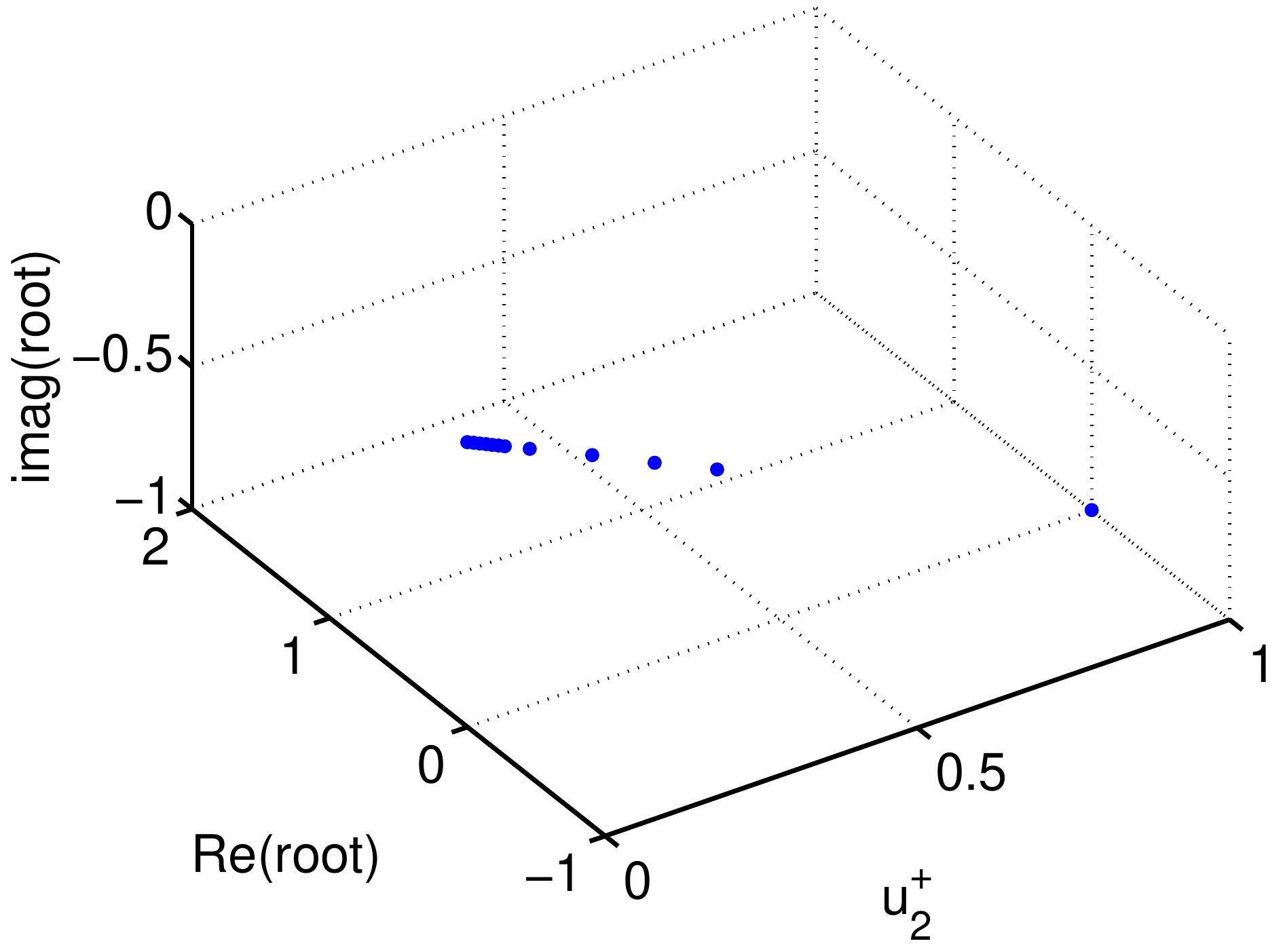}
\end{SCfigure}
%
%

\section{Viscous stability analysis: Evans function}\label{viscous_evans_function}

We also study the viscous linear stability analysis using Evans function techniques. Roughly speaking, the study of the Evans function proceeds as follows: consider the system,
\begin{equation*}
f_0(u)_t + \sum_{j=1}^dA_j(u)\partial_{x_j}u = \sum_{j,k=1}^d(B_{jk}(u)u_{x_k})_{x_j},
\end{equation*}
and make the change of coordinates $x_1 \to x_1 -s t$ to obtain,
\begin{equation*}
f_0(u)_t-sf_0(u)_{x_1}+\sum_{j=1}^d A_j(u)\partial_{x_j}u = \sum_{j,k=1}^d (B_{jk}(u)u_{x_k})_{x_j}. 
\end{equation*}
We linearize about a planar traveling wave solution $\bar u$, traveling in the direction $x_1$ to obtain,
\begin{equation*}
(\bar A^0u)_t -s(\bar A^0u)_{x_1} + \bar Cu+\sum_{j=1}^d \bar A_j\partial_{x_j}u= \sum_{j,k=1}^d \bar B_{jk} u_{x_kx_j} + \sum_{j=1}^d(dB_{j1}(\bar u) (u,\bar \partial_{x_1}u))_{x_j}, 
\end{equation*}
where $\bar A_j:= A_j(\bar u)$ and the linearization of $A_1(u)\partial_{x_1}u$ is $ \bar C u + \bar A_1\partial_{x_1}u$. Next we take the Fourier transform in the variables $\xi_2, \ldots, \,\xi_d$, and the Laplace transform in $t$ to obtain the eigenvalue problem,
\begin{equation}\label{ode_viscous}
\lambda \bar A^0 u - (\bar A_1)'u+\bar C u + \sum_{j=2}^d i\xi_j \widetilde A_j u + \sum_{j,k = 2}^d \xi_j\xi_k\bar B_{jk}u= \left( \bar B_{11} u' + \sum_{j=2}^d i\xi_j \bar B^j u - \hat A_1 u\right)', 
\end{equation}
where $\widetilde A_j:= \hat A_j + (\bar B_{j1})'$,  $\bar B^j:= \bar B_{j1}+\bar B_{1j}$, $\hat A_1u:= \bar A_1u-s\bar A^0u -dB_{11}(\bar u) (u,\bar \partial_{x_1}u)$, and $\hat A_ju:=  \bar A_ju -dB_{j1}(\bar u) (u,\bar \partial_{x_1}u)$ for $j\geq 2$.
We obtain our Evans function coefficient matrix by using the flux form, only wherever we see $\widetilde A^{\xi}:= \sum_{j=2}^d\xi_j \widetilde A_j$, we instead use $i(\bar A_1)' - i \bar C+\sum_{j=2}^d\xi_j \widetilde A_j$ (for further discussion, see \cite{BHLyZ1}). 

The Evans function  $(\lambda, \xi) \mapsto D(\lambda, \xi )$ consists of a measurement at  $x_1 =0$  of the transversality between the decaying manifolds of the ODE \eqref{ode_viscous} below when restricted to the spaces $x_1 \gtrless 0.$ 
It is  an analytic function of both its parameters whenever $\lambda$ is in the domain of consistent splitting
(see Section \ref{s:vweak}), which
in the present case includes $\{(\xi,\lambda): \Re \lambda \geq 0\}\setminus \{(0,0)\}$.
For our study of stability in a channel the relevant values of $\xi$ are
$\xi \in \frac{2\pi}{L}\mathbb{Z}$, $L$ being the width of the channel we are studying. 

According to the results  in \cite{ZS}, the Lopatinsky determinant is 
(in most cases) 
a first order approximation of the Evans function in the low frequency regime; consequently, inviscid instability implies viscous instability.\footnote{
Indeed this implication holds also in cases such as slow MHD shocks for which the approximation property
	is not known; see \cite[Theorem 2.30]{GMWZ6} and \cite[Lemma 8.3 and Prop. 83]{GMWZ6}. We note that even
	for slow MHD shocks, the first-order approximation property holds for {generic} frequency angles
	$(\xi,\lambda)$ \cite[Lemma 8.3]{GMWZ6}.}
The latter implication is our main  motivation in  the search for zeros of the Evans function, i.e., values of the spectral parameter $\lambda$ such that $D(\lambda, \xi) =0$ for some $\xi \in \frac{2\pi}{L}\mathbb{Z}$. 

Apart from constraint issues, substantial new difficulties in going from 1-D to multi-D Evans function computations arise:

\begin{enumerate}[leftmargin=*]
 \item  Number of equations/parameters: we deal with $5\times5$ system of equations with downstream/upstream shock conditions. The complexity, both mathematical and numerical, is enormous.  Symbolic computations are necessary to create the code without risk of human error. Fortunately, the $\beta$-model allows use of previously tested code (STABLAB; see more in  \cite{STABLAB});
 \item Unexpected issues with Evans function asymptotics related to Eulerian vs Lagrangian coordinates make computation of the multi-D numerical Evans function practically impossible; issues were only recently resolved in this project and, to knowledge of the authors, in only one other project (\cite{HLyZ2}).
\end{enumerate}

%

%

\subsection{Computing the profile}
To solve for the viscous profile numerically, we cut the domain in half and use a coordinate change to reflect the interval $(-\infty,0]$ to $[0,\infty)$. We use matching conditions at $x = 0$ and projective boundary conditions at $x = \infty$ that select the decaying solution. To solve the resulting three point boundary value problem, we use MATLAB's bvp5c solver with relative and absolute error tolerances set respectively to $10^{-6}$ and $10^{-8}$. 

\subsection{Computation of decaying manifolds and eigenfunctions}\label{numerics}
We recall that the Evans function takes the form
\begin{equation}
D(\lambda;\xi) = \det \left([W_1^{-\infty}(0;\lambda,\xi),...,W_k^{-\infty}(0;\lambda,\xi),W_{k+1}^{+\infty}(0;\lambda,\xi),...,W_n^{+\infty}(0;\lambda,\xi)]\right),
\notag
\end{equation}
where 
\begin{equation}
\frac{d}{dx}W_j^{\pm \infty}(x;\lambda,\xi) = A(x;\lambda,\xi)W_j^{\pm \infty}(x;\lambda,\xi) 
\label{eq:evans_ode}
\end{equation}
and $W_1^{-\infty},...,W_k^{-\infty}$ and $W_{k+1}^{+\infty},...,W_n^{+\infty}$ form a basis for the solution space of \eqref{eq:evans_ode} that decays as $x\to - \infty$ and as $x\to +\infty$, respectively. If $(\lambda_0,\xi_0,v_0)$ is an eigenvalue, Fourier mode, eigenfunction triple, then $D(\lambda_0,\xi_0) = 0$ and $v_0$ can be expressed as a linear combination of $W_1^{-\infty},...,W_k^{-\infty}$ when $x\in(-\infty,0]$ and as a linear combination of $W_{k+1}^{+\infty},...,W_n^{+\infty}$ when $x\in [0,+\infty)$. Hence, to solve for an eigenvalue $\lambda_0$ and eigenfunction $v_0$ corresponding to a fixed $\xi_0$, we may do as in \cite{Humpherys2015}, and solve \eqref{eq:evans_ode} as a boundary value problem with $\lambda$ as a free parameter. At $x = \pm \infty$, we use projective boundary conditions, $P^{\pm\infty}v_0(\pm \infty) =0$, which force the projection of $v_0$ onto the unstable and stable manifolds at $x = \pm \infty$, respectively, to be zero. The projective boundary conditions at $x = \pm \infty$ provide $k$ and $n-k$ boundary conditions, which leaves one additional boundary condition corresponding to the free parameter. We provide a phase condition, such as $\|v_0(0)\| = 1$ or a component of $v_0(0)$ is unity, which selects an eigenfunction from the family $\{ cv_0: c\in \C, c\neq0\}$. In practice, to numerically approximate $v_0$ we divide the domain into two parts, $ (-\infty,0]$ and $[0,+\infty)$,  and then perform the change of coordinates $x\to -x$ on $(-\infty,0]$, thus doubling the dimension of the system \eqref{eq:evans_ode} now posed on $[0,+\infty)$. We then pose the boundary value problem on the finite interval $[0,L]$ where $L$ is the truncation value approximating infinity as determined in solving the traveling wave profile. In the end, there is one phase condition given at $x = 0$ and $n$ projective boundary conditions given at $x = L$. 

To obtain an initial guess for the boundary value problem, we apply STABLAB's built in root finding capabilities, such as the method of moments or a two-dimensional bisection method using squares in the complex plane, to the Evans function to find a $\tilde \lambda_0$ which approximates the eigenvalue $\lambda_0$ of interest. To approximate the eigenfunction $v_0$, we set $$
W^L(x):= [W_1^{-\infty}(x;\tilde \lambda_0,\xi_0),...,W_k^{-\infty}(x;\tilde \lambda_0,\xi_0)]
$$
and $W^R(x):=[W_{k+1}^{+\infty}(x;\tilde \lambda_0,\xi_0),...,W_n^{+\infty}(x;\tilde \lambda_0,\xi_0)]$ and then find $C = (c_L, c_R)^T$ that minimizes $\|[W^L(0),-W^R(0)]C\|$ in the least squares sense subject to $\|C\| = 1$. 

In practice, solving for $W^L(x)$ and $W^R(x)$ is difficult because of competing modes of $A(x;\lambda,\xi)$ 
as $x\to \pm \infty$. Thus, we compute $W^L(x)$ and $W^R(x)$ using the method of continuous orthogonalization of \cite{HuZ1}. In this method, we set $W^L = \Omega_L\alpha_L$ and $W^R = \Omega_R\alpha_R$ where $\Omega_L$ and $\Omega_R$ are orthonormal basis of $W^L$ and $W^R$ respectively. In particular, as detailed in \cite{HuZ1}, $\Omega_L$ and $\alpha_L$ satisfy the
well-conditioned ODEs
\begin{equation}
\begin{split}
\Omega' & = (I-\Omega \Omega^*)A\Omega\\
\alpha' & = (\Omega^*A\Omega)\alpha.
\end{split}
\notag
\end{equation}
Thus, we can solve the ODE for $\Omega_L$ and $\Omega_R$ and then minimize $\|[\Omega_L(0),-\Omega_R(0)]C\|$ subject to $\|C\| = 1$, then afterward solve for $\alpha_L$ and $\alpha_R$ by initializing the associated ODE at $x = 0$. We note that solving for $\alpha_L$ from $x = 0$ to $x = -L$, or for $\alpha_R$ from $x = 0$ to $x = L$, is numerically well posed as error decays in this direction of integration. We then recover $W^L$ and $W^R$, which provides an initial guess for the boundary value problem described previously.

Fig. \ref{fig104} exemplifies the applicability of the numerical construction here described: it shows a graph of the real part of the variable $u_1$ of the eigenfunction associated to the bifurcating eigenvalue. Notice the loss in the planar structure, which is also pointed out in \cite{Mo} in the strictly parabolic case for $\mathcal{O}(2)$ steady bifurcations.

\subsection{Viscous stability diagram } 

To determine stability of the viscous shock waves, we compute the Evans function, similar to the Lopatinsky determinant, on a contour $\Omega_r:= \partial (\{z\in \C: \mathrm{Re}(z) \geq 0\}\cap \{z\in \C: |z| = r\}$, where $r$ is now chosen by curve fitting the Evans function to within 0.2 relative tolerance of its asymptotic behavior $D(\lambda) \sim C_1e^{C_2\sqrt{\lambda}}$, indicating that any zeros of the Evans function that may exist lie within $\Omega_r$; see Appendix \ref{winding_number_appendix} for more details. However, we do limit $R\leq 128$ for practical reasons since the time to compute $D(\lambda)$ becomes 
unreasonable for $|\lambda|$ too large. To compute the Evans function, we use the method of continuous orthogonalization (\cite{HuZ2}) 
described in Section \ref{numerics}, computed in pseudo-Lagrangian coordinates for better conditioning as described
in \cite{BHLyZ2}.
We initialize the Evans ODE with a basis that varies analytically in $\lambda$ via the method of 
Kato \cite{Kato} as described in \cite{HSZ}. 
All of these methods are built into the STABLAB platform with which we perform our computations \cite{STABLAB}.

\begin{table}[!th]
    \begin{tabular}{|c|c|c|c||c|c|c|c|c|}
        \hline
        $\gamma$ & $u_{1+}$ & $h_1$ & $\xi$ & WND & Root & Radius & Num Pnts & Run time \\
        \hline
        \hline
        5/3 & 0.0001 & 1.1 & 0.001 & 0 & NA & 128 & 539 & 197 \\
        \hline
        5/3 & 0.0001 & 1.1 & 0.1 & 1 & 1.2280e-04  &  128 & 491 & 162 \\
        \hline
        5/3 & 0.0001 & 1.5 & 0.05 & 1 & 7.0364e-05  & 128 & 517 & 171 \\
        \hline
        5/3 & 0.0001 & 8 & 0.2 & 0 & NA  & 128 & 543 & 381 \\
        \hline
        7/5 & 0.0001 & 1.5 & 0.005 & 0 & NA  & 128 & 577 & 182 \\
        \hline
        7/5 & 0.0001 & 4 & 0.05 & 1 & 1.1829e-05   & 128* & 599 & 282 \\
        \hline
        7/5 & 0.0001 & 16 & 0.1 & 0 & NA  & 128 & 527 & 747 \\
        \hline
        \hline
        5/3 & 0.01 & 1.1 & 0.001 & 1 & 2.8902e-05  & 128* & 341 & 161 \\
        \hline
        5/3 & 0.01 & 1.1 & 0.1 & 1 & 0.0024  & 128 & 297 & 128 \\
        \hline
        5/3 & 0.01 & 1.5 & 0.05 & 1 &  7.5206e-04 & 64* & 281 & 128 \\
        \hline
        5/3 & 0.01 & 8 & 0.2 & 0 & NA  & 128 & 361 & 295 \\
        \hline
        7/5 & 0.01 & 1.1 & 0.8 & 1 &  0.0050 & 128* & 287 & 139 \\
        \hline
        7/5 & 0.01 & 2 & 1.6 & 0 & NA  & 64* & 237 & 132 \\
        \hline
        7/5 & 0.01 & 8 & 1.6 & 0 & NA  & 8* & 133 & 163 \\
        \hline
        \hline
        5/3 & 0.2 & 1.1 & 0.005 & 1 &4.7035e-04   & 16* & 137 & 93.2 \\
        \hline
        5/3 & 0.2 & 1.1 & 0.2 & 1 & 0.0137  & 4* & 141 & 84.2 \\
        \hline
        5/3 & 0.2 & 1.5 & 0.1 & 1& 0.0044  & 32* & 159 & 112 \\
        \hline
        5/3 & 0.2 & 8 & 0.8 & 0 & NA  & 8* & 171 & 203 \\
        \hline
        7/5 & 0.2 & 1.5 & 0.05 & 1 & 0.0024  & 32* & 165 & 113 \\
        \hline
        7/5 & 0.2 & 4 & 0.1 & 1&  3.7694e-04  & 4* & 221 & 138 \\
        \hline
        7/5 & 0.2 & 16 & 0.1 & 0 & NA  & 8* & 251 & 521 \\
        \hline
    \end{tabular}
    \caption{Table providing computational details of the viscous stability study. The fifth through ninth columns respectively show the winding number of the computation, the location of the root (if applicable) computed with absolute tolerance of $5\times10^{-7}$, the outer radius of the contour on which the Evans function was computed, the number of points on the contour, and the time in seconds the computation took to run. The Evans function was computed on a semi-annulus with inner radius $10^{-5}$ and outer radius as stated. A * indicates that the radius was taken large enough that curve fitting the Evans function with its asymptotic behavior yields a relative error no greater than 0.2. \label{viscous_table}}
\end{table}


When $\gamma = 5/3$, we sample the winding number for the Fourier coefficient $\xi \in [0.001, 0.004, 0.007,$ $0.01, 0.04, 0.07, 0.1, 0.14, 0.17, 0.2]$ for various values of $u_1^+$ and $h_1$ and plot the resulting stability diagram in Fig. \ref{fig264}-(a). When $\gamma = 7/5$, we obtain the corresponding stability diagram plotted in Fig. \ref{fig258}. We note that the z-axis in Fig. \ref{fig258} indicates the value of $\xi$ to give a sense of which modes are unstable. In Table \ref{viscous_table} we indicate for various parameters the radius needed to enclose any potentially unstable eigenvalues and we indicate the unstable root when it exists. 

\subsection{On the symmetry of eigenfunctions and equivariance of the Evans function} It is not possible to conclude from our analytical results that the bifurcating eigenvalues have associated dimension 2 rather than $2 n$, $n \in \{2, 3, \ldots \}.$  On the other hand, as we discuss next, an interesting conclusion can be derived with regards to the symmetry of the decaying manifolds discussed in this section. 

By definition, in $\mathcal{O}(2)$ symmetric systems, if $v(x,y)$ is a solution then $\mathcal{R} v(x, -y)$ is also a solution, for  $\mathcal{R}$ an orthogonal matrix. Fourier transforming in the y-direction, we observe that  any real eigenvalue $\lambda$ has an eigenfunction $e^{iky}w(x)$ and also an eigenfunction $e^{-iky}\mathcal{R}w(x)$. However,  by complex symmetry so is $e^{iky}\overline{w}(x)$. Likewise, $e^{iky}\mathcal{R}\overline{w}(x)$. This suggests that an eigenvalue $\lambda=0$ associated with $k_*\neq 0$ should have total multiplicity 4, 2 for each of $\pm k_*$, unless the apparently non-generic situation occurs that  $\mathcal{R} w$ and $\overline{w}$ are constant multiples of one another.

The analogy to complex conjugation is apparent: given any eigenfunction $w$, form   $(w+ \mathcal{R}\overline{w})$, the \textit{real part}, and $(1/i)(w-\mathcal{R}\overline{w})$, the \textit{imaginary part} of the eigenfunction $w$.  It is easily seen that both of these are invariant under 
$$\mathcal{T}: f\to \mathcal{R}\bar f,$$ 
and span the space $\mathrm{span}\{w,\mathcal{R}\overline{w}\}$ contained in the eigenspace of $\mathscr{L}_{k_*}$.    The same reasoning gives a symmetric basis of the subspaces of decaying solutions of $(\lambda-\mathscr{L})w=0$ at $\pm \infty$.  So, we can construct an Evans function from these eigenfunctions, and whenever there is a zero, we can find an eigenfunction given by a real linear combination of them, which is thus itself symmetric under the mapping $\mathcal{T}$, that is, invariance under $\mathcal{T}$ is in fact generic. For there to be non-symmetric eigenfunctions, there would have to be a higher multiplicity of linear dependence.  Moreover, MHD gives an explicit example where the multiplicity is in fact 2. Indeed, if there are only 2 eigenvalues, then, 
choosing the representatives of eigenfunctions having symmetry, we see that this uses up all the dimensions and there cannot be more. Consequently, eigenfunctions may always be chosen with $\mathcal{O}(2)$ symmetry.  Furthermore, one can build an Evans function with $\mathcal{O}(2)$ symmetry and this detects ``nice" eigenfunctions having the desired symmetry; there may well be others, but this would be ``extra", and there is no reason they would need to be there, that is, they are not generic.

\subsection{Finding the critical destabilization parameter \texorpdfstring{$h_1$}{h1}}\label{s:finding}

In the following discussion we assume $\gamma = 5/3$ and $u_1^+ = 0.86$. Determining exactly where the stability transition occurs in $h_1$ for the Evans function is a little difficult because the contour on which we compute the Evans function comes close to a root near the stability transition. A closer approximation was tried, but we could only confirm that, for $\xi = 0.005$, the Evans function has a root when $h_1 = 2$, but it does not when $h_1 = 1.999$. Several values of $\xi$ were tested when $h_1 = 1.999$, and no zeros of the Evans function were found; on the other hand, a root is found when $h_1 = 2$, being approximately $2.86 \times 10^{-7}$. The Lopatinsky determinant for $h_1 = 2$ and $\xi =1$ has a root at $\lambda_0 = 4.26\times 10^{-4}$, which for $\xi =0.005$, corresponds to $\lambda_0 = 2.13\times 10^{-6}$. It was  verified that the Lopatinsky determinant has no root to the right of a the vertical line $\lambda = 10^{-4}$ for $h_1 = 1.995$, but it does for $h_1 = 1.996$. The 
contour cannot be taken  much closer to the imaginary axis than $10^{-4}$ because of the essential spectrum.

In summary, we can estimate that the stability transition for the Lopatinsky determinant occurs at $h_1 \approx 1.995$ and for the Evans function at approximately $h_1 = 2$.  As discussed in the introduction, this is due to discretization of Fourier
modes. 
The whole-space transition values agree, as a consequence of concavity of the associated spectral curves, 
illustrated in Fig. \ref{fig101}.

\begin{SCfigure}
  \centering
  \caption{Plot of the roots of the Lopatinsky determinant against $h_1$ for $\xi = 1$ when $\gamma = 5/3$ and $u_1^+ = 0.9$.}
\label{fig89}
\includegraphics[scale=0.4]{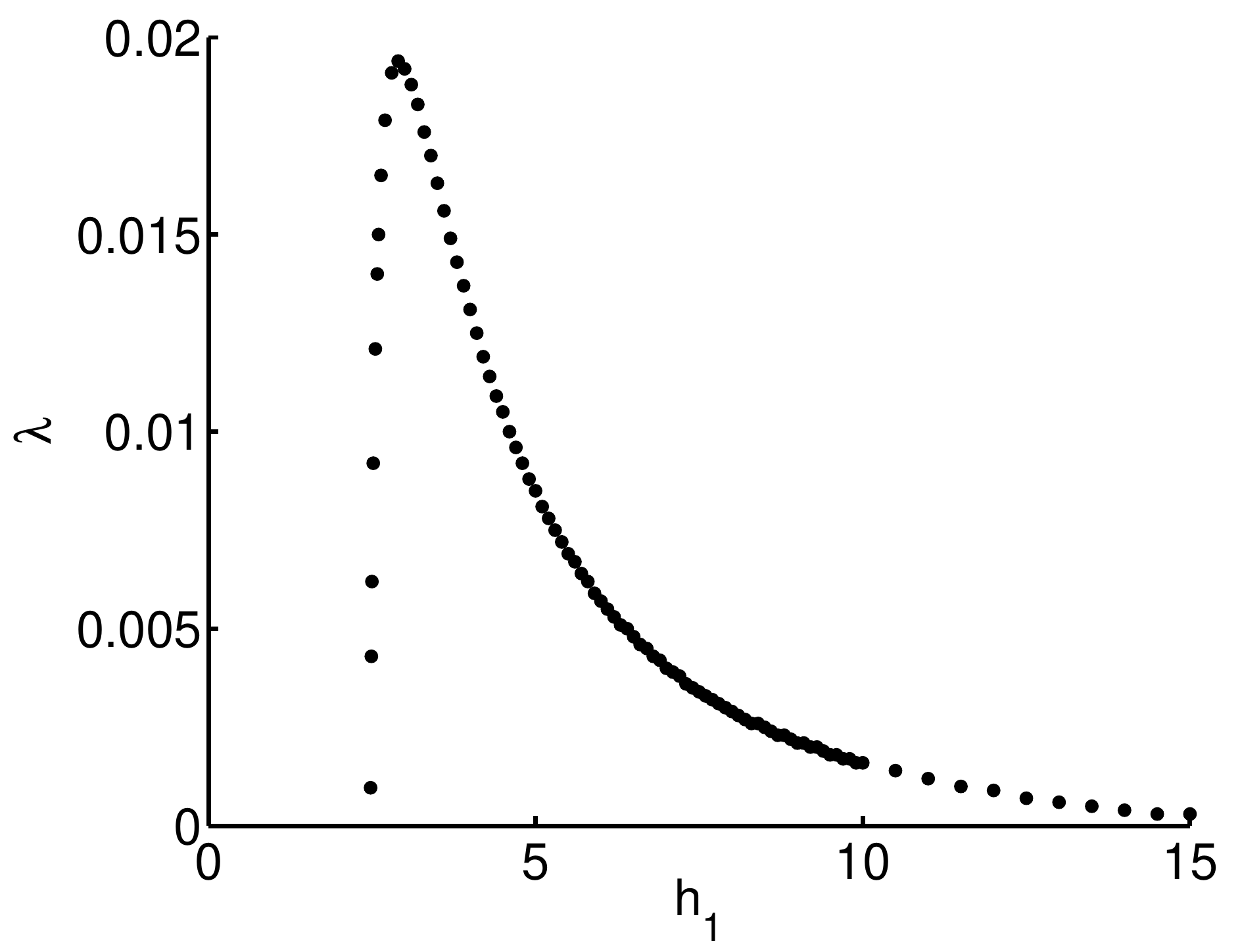}
\end{SCfigure}

\subsection{Verifying concavity}\label{s:concavity}
In the previous subsection, we have verified concavity of the critical spectral curve/agreement of 
(whole space) viscous and inviscid transition values for one (typical) choice of parameters, essentially by
force, by computing the critical spectral curve $\lambda_*(\xi)$ and approximating the second derivative.
In this subsection, we check concavity more efficiently using the implicit function theorem.

To verify concavity of the spectral curves $\lambda(\xi) = 0$ at the critical transition, we 
approximate the quantity 
$$
\sigma:=-\frac{\tilde D_{\rho}}{\tilde D_{\lambda_0}}|_{(\rho,\lambda_0,\xi_0) = (\eps,0,1)}
$$
using finite difference quotients, where $\tilde D(\rho,\lambda_0,\xi_0) = D(\rho \xi_0,\rho \lambda_0)$ is the Evans function in polar coordinates. 
This may be recognized as the negative of the ``effective viscosity coefficient'' of \cite{ZS,Z2,Z3,MR2448741},
with $\lambda(\xi)= \sigma \xi^2 + O(\xi^3)$.
Negativity of $\sigma$ corresponds to the ``refined stability condition'' of the references.

To approximate $\sigma$, we obtain an initial basis for the Evans function ODE by finding a 
basis $B_{\pm}$ at $(\lambda,\xi) = (0,\eps)$, and then multiplying $B_{\pm}$ on the left by an analytic projection onto the desired subspace, thus creating a locally analytically varying basis determining $D$. We then
compute the difference quotient approximation
\begin{equation}
\sigma \approx \frac{D(0,\eps)-D(0,2\eps)}{D(\eps^2,\eps)-D(0,\eps)}
\end{equation}
for various values of $\eps>0$.
We perform a convergence study to verify the correctness of our approximation of $\sigma$, using five evenly spaced values of $\eps$ between 1e-3 and 1e-6. We plot $\sigma$ against $u_1^--u_1^+$ in Fig. \ref{fig273}(a), demonstrating that $\sigma$ is always negative. Recall that $\sigma > 0$ indicates that instability may occur in the viscous system before it does in the inviscid system as the bifurcation parameter $h_1$ is increased. We also did a spot check to verify that we get the same value for $\sigma$ when we interpolate the curves $\lambda(\xi) = 0$ with quadratic polynomials for various values of $h_1$, and then interpolate the second derivative of these quadratic polynomials in the variable $h_1$ with a quartic polynomial, which we evaluate at $h_1 = H_*$. In Fig. \ref{fig273}(b), we demonstrate the quadratic interpolation of a typical curve $\lambda(\xi) = 0$. We note that we compute $\sigma$ only for $u_1^+ $ as small as $0.7005\approx U_*$, where $U_*$ is the value of $u_1^+$ at which shocks become inviscid stable at the minimum $h_1 = H^*$ value for which they are 3-shocks.

The clear conclusion from Fig. \ref{fig273}(a) is that the spectral curve is indeed concave at transition to instability,
for all relevant values of physical parameters, in the case of a monatomic gas $\gamma=5/3$.

\begin{figure}[htbp]
 \begin{center}
$
\begin{array}{lcr}
(a) \includegraphics[scale=0.25]{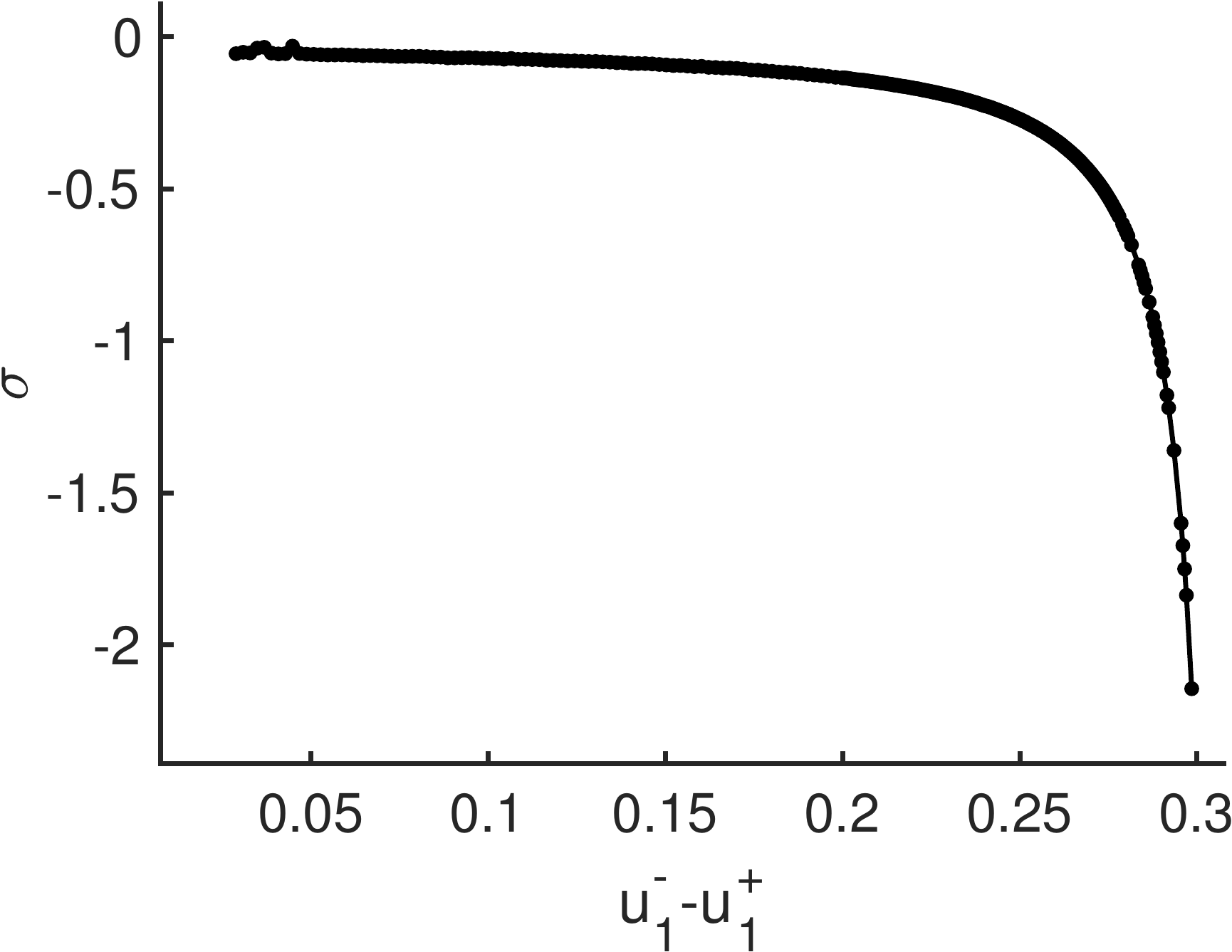} &(b) \includegraphics[scale=0.25]{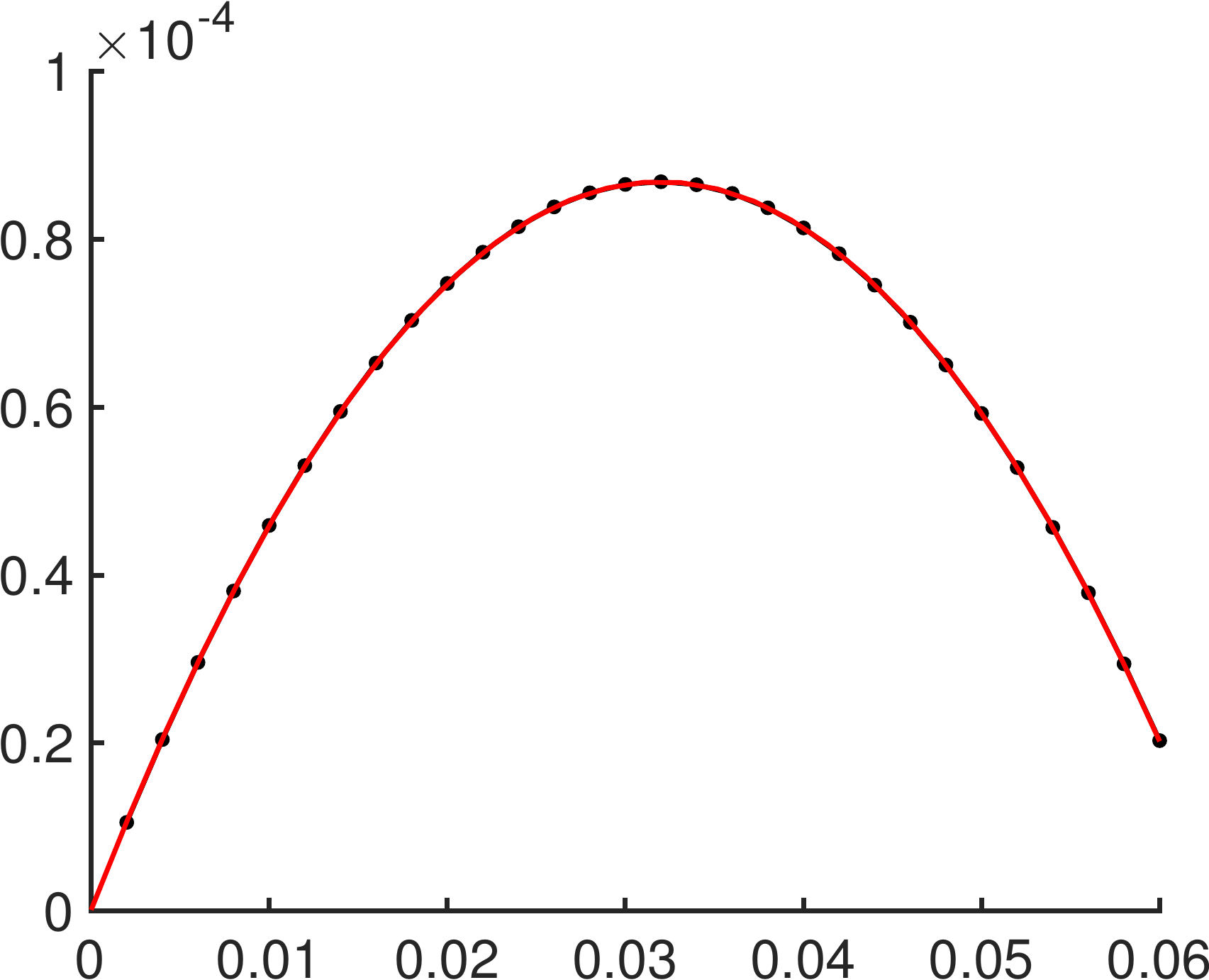}& (c) \includegraphics[scale=0.25]{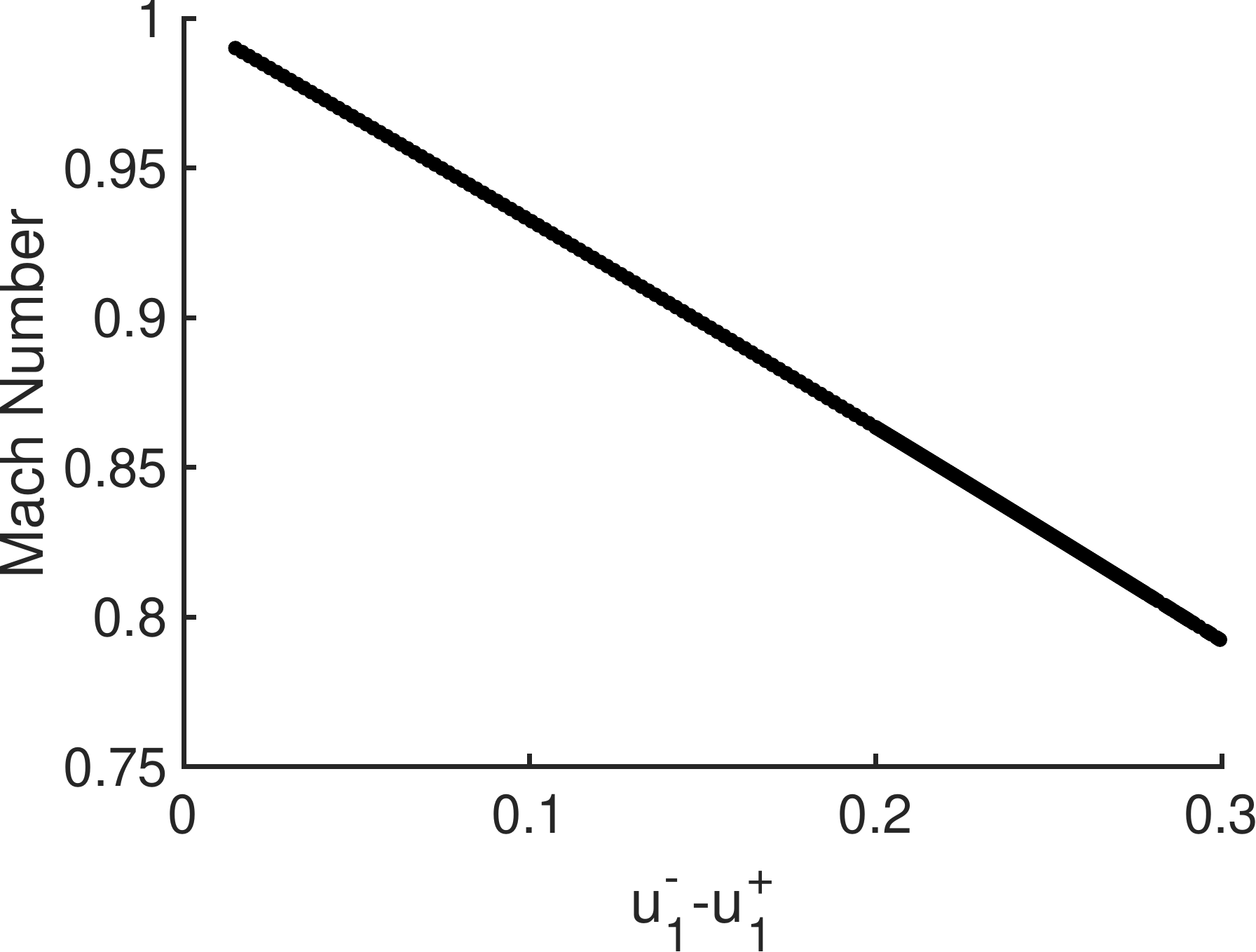}
\end{array}
$
\end{center}
\caption{(a)Plot of $\sigma$ against $u_1^--u_1^+$. (b) Plot of interpolation (red curve) of points $\xi_n$ for which $\lambda(\xi_n) = 0$. (c) A plot of the Mach number against $u_1^--u_1^+$ at the neutral stability curve. }
\label{fig273}
\end{figure}

\subsubsection{Finer points: analyticity and glancing}\label{s:finer}
The implicit function computation $\sigma= -\tilde D_\rho/\tilde D_{\lambda_0}$
of the second-order coefficient in $\lambda(\xi)=\sigma \xi^2 + \dots$, or, equivalently
$$
\lambda_0(\xi)= \sigma \rho + \dots,
$$
defined by $\tilde D(\rho, \lambda_0(\rho), 1)=0$, presupposes analyticity of $\tilde D$
in $(\rho, \lambda_0)$.
As pointed out in \cite{ZS,Z1}, analyticity holds away from ``glancing points'', defined as frequencies $\lambda_0$
for which $A_1^{-1}(\lambda_0 A_0 + i\xi_0 A_2)$ has neutral (i.e., zero real part) eigenvalues possessing a nontrivial
Jordan block.

However, for the parallel MHD equations, \cite[Lemma 7.2(ii)]{MeZ2} specialized to the 2-D case considered here
yields for $(\lambda_0, \xi_0)=(0, 1)$ that there is {\it always} a Jordan block of dimension 2, hence an associated
square-root singularity in the initializing decaying eigenspaces at both $x\to \pm \infty$, inherited by the manifolds
of decaying solutions.  This is readily verified by direct computation of the zero-eigenspace of $iA_1^{-1}A_2$,
which may be seen to have geometric multiplicity 2 but algebraic multiplicity 3.
Thus, we cannot simply appeal to nonglancing to conclude analyticity: there is always glancing!
On the other hand, the fact that {\it both} decaying manifolds at $\pm \infty$ have a square-root singularity
at $\lambda=0$ implies that the Evans determinant obtained as their exterior product, by a monodromy argument,
or simply by composing the two square roots to obtain a linear factor, is analytic, despite the presence of glancing 
modes.
This justifies our computations above.

\section{Additional description of numerics}\label{additional_numerics}
To create Fig. \ref{figs237and238} (a), we used the method of moments described in \cite{Bronski} to determine the roots of the Evans function. The moments were computed on the contour $\partial ((\{z\in \C: \mathrm{Re}(z)\geq 0\}\cap \{z\in \C: |z|\leq 0.01\})/ \{z\in \C: |z|\geq 10^{-4}\})$. To create Fig. \ref{figs237and238} (b), we used a forward finite difference scheme to approximate the derivative using the data given in Fig. \ref{figs237and238} (a).

\appendix

\section{On winding number computations}\label{winding_number_appendix}

The study  of unstable modes reduces to studying an eigenvalue equation: at the inviscid level through Lopatinsky determinant,  $\Delta(\lambda_{\mbox{lop}},\xi)$; at the viscous level through an Evans function, $D(\lambda_{\mbox{ev}},\xi)$. Due to analyticity of these two objects in their parameters, the search for growing  modes corresponds to verifying if, for a fixed $\xi^*$ there exists a root $\lambda_{\mbox{lop}}$ of the Lopatinsky determinant and a root $\lambda_{\mbox{ev}}$ of the Evans function  in the half space $\mathrm{Re}(z) >0$ of the complex space $\mathbb{C}$. This computation relies then on winding number computations, based upon the argument principle. An example of these computations can be seen in Figures \ref{fig233} and \ref{fig234}.

\begin{figure}[htbp]
        \centering
        \begin{subfigure}[b]{0.45\textwidth}
		\includegraphics[scale=0.4]{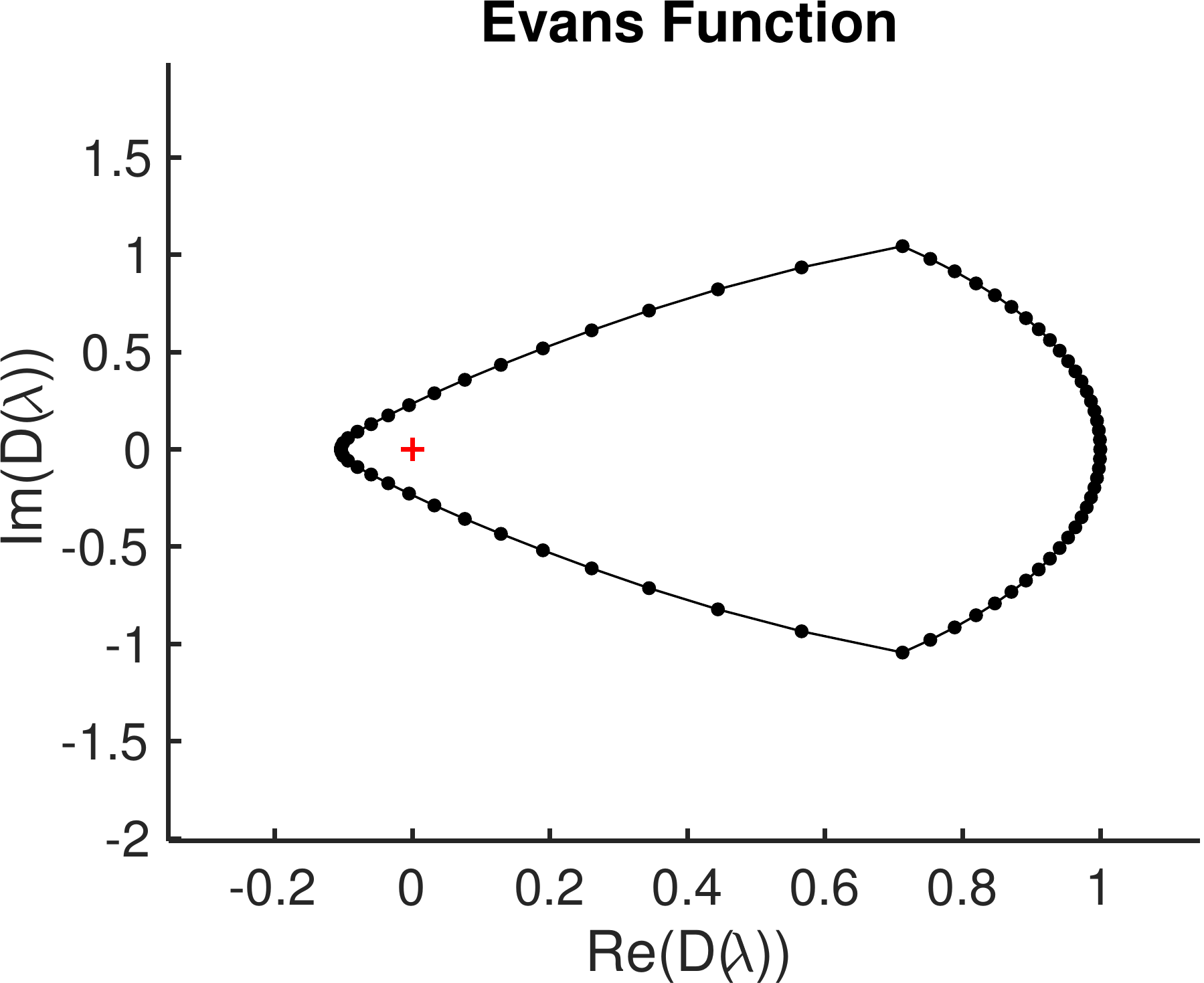}
		\caption{Plot of the image of the Evans function computed on a semi-circle contour, a contour with a vertical line centered at the origin and a half circle on the right connecting the end points of the vertical line, with radius 0.1 when $\gamma = 5/3$, $u_1^+ = 0.4$, and $h_1 = 2$.}
	\label{fig233}
        \end{subfigure}%
         \quad
        \begin{subfigure}[b]{0.45\textwidth}
		\includegraphics[scale=0.4]{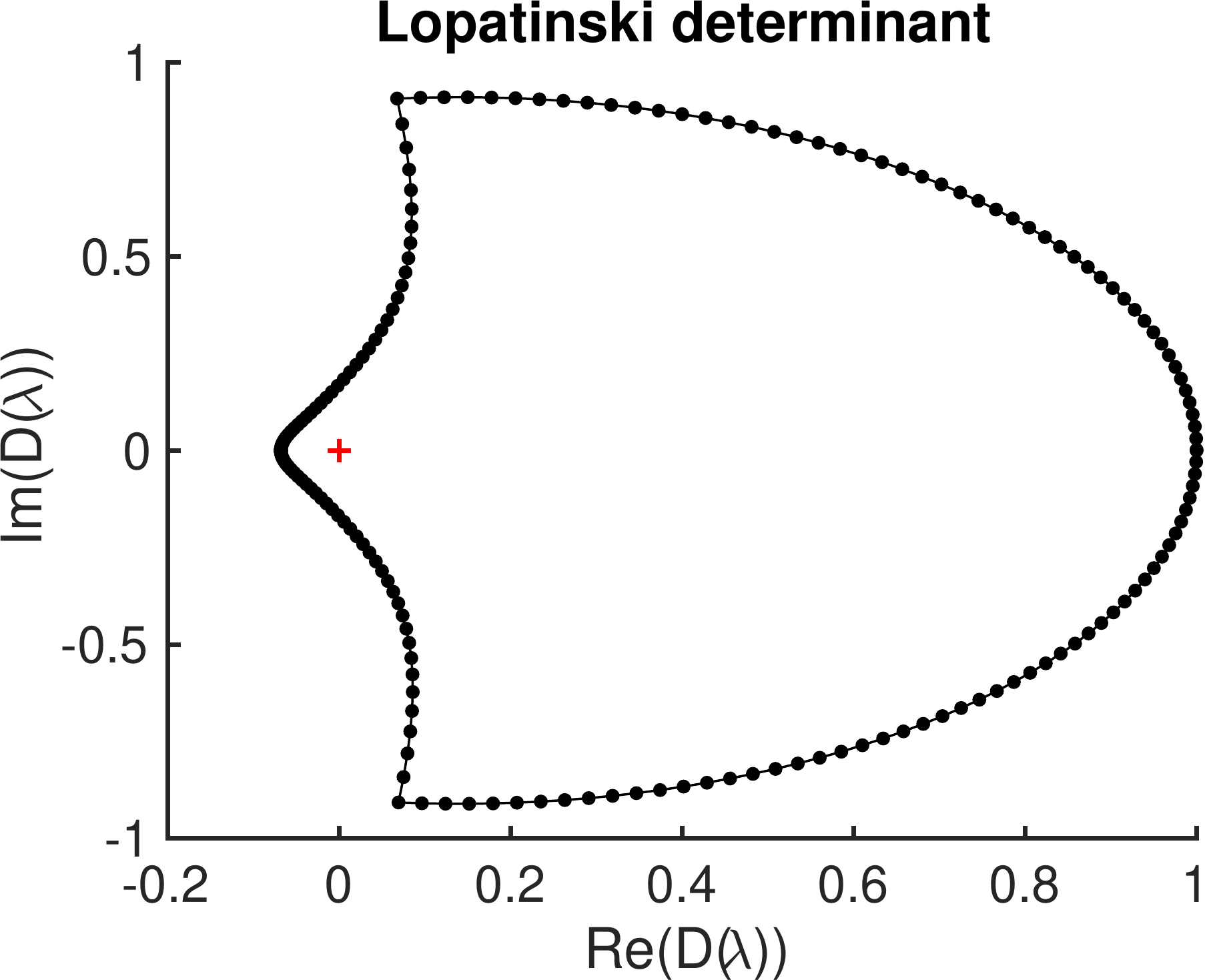}
		\caption{ Plot of the image of the Lopatinsky determinant computed on a semi-circle of radius 0.5 shifted right by 0.01 when $\gamma = 5/3$, $u_1^+ = 0.4$, and $h_1 = 2$. A red plus sign marks the origin.}
	\label{fig234}
	\end{subfigure}%
	\caption{Winding number computations for the viscous and inviscid system. A red plus sign marks the origin.}
\end{figure}

\section{Guide to \cite{FT} and \cite{BT1} \# 1: persistence of constraint condition, a second proof }\label{appendix_FT_proof}

We give here  a second proof of Proposition \ref{persistence} 
for the linearized MHD $\beta$-model,\footnote{Inherited from the nonlinear version, by linearity of the constraint.}
following \cite[Remark 3.2]{FT}. In what follows,  we write $\left( \mathcal{Q}_1\right)_{x_i}$ or $ \mathcal{Q}_{1,x_i}$ to denote  the partial derivative $\partial_{x_i}\left( \mathcal{Q}_1\right)$ of a quantity $\mathcal{Q}_1$ with respect to $x_i$. To begin with,  we  linearize  \eqref{eq1-c} about a shock profile $\overline{\mathcal{V}} = (\brh, \overline{u}_1, 0, \overline{h}_1,0)$ (thus, $\overline{u}_2 = 0$, $\overline{h}_2 = 0$), obtaining
\begin{subequations} 
\begin{eqnarray}
	(h_1)_t + (\overline{h}_1 u_2 - h_2 \overline{u}_1)_{x_2} + \beta (h_{1,x_1} + h_{2,x_2}) = 0; \label{eq38a}\\
 (h_2)_t  - (\overline{h}_1 u_2 - h_2 \overline{u}_1)_{x_1} = 0. \label{eq38b}
\end{eqnarray}
\end{subequations}
In this section we will only make use of the first of these equations.
\begin{equation}
	\mbox{(``+" case)} \quad \quad \quad (h_1^+)_t + (\overline{h}_1^+ u_2^+ - h_2^+\, \overline{u}_1^+)_{x_2} + \beta ( h_{1,x_1}^+ 
	+ h_{2,x_2}^+) = 0;  \label{eq39a}
\end{equation}
\begin{equation}
	\mbox{(``-" case)} \quad \quad \quad (h_1^-)_t + (\overline{h}_1^- u_2^- - h_2^- \,\overline{u}_1^-)_{x_2} + \beta ( h_{1,x_1}^- 
	+ h_{2,x_2}^-) = 0.  \label{eq40a}
\end{equation}
The linearized Rankine-Hugoniot conditions across a shock at $x_1 =0$ yield
\begin{subequations} 
\begin{eqnarray}
h_1^+ &=& h_1^-; \label{eq41-a}\\
\overline{h}_1 (u_2^+ - u_2^-) &=& \overline{u}_1^+\left(h_2^+ - \frac{\brh^+}{\brh^-}h_2^- \right) = \overline{u}_1^+\left(h_2^+ - R h_2^-\right). \label{eq41-b} 
\end{eqnarray}
\end{subequations}
We subtract equations \eqref{eq39a} and \eqref{eq40a},
\begin{equation}
 (h_1^+ - h_1^-)_t + \left(\overline{h}_1 (u_2^+ - u_2^-) - h_2^+\, \overline{u}_1^+ + h_2^- \,\overline{u}_1^-\right)_{x_2} + 
 \beta ( h_{1,x_1}^+ + h_{2,x_2}^+) -\beta( h_{1,x_1}^- + h_{2,x_2}^-) = 0,  \label{eq42a}
 \end{equation}
and we differentiate the equation \eqref{eq41-a} (respectively, \eqref{eq41-b})  with respect to $t$ (respectively, $x_2$), which is legitimate since we are not differentiating in any direction perpendicular to the shock front. We end up with
\begin{equation}
\left(\overline{u}_1^+(h_2^+ - R h_2^-) - h_2^+\, \overline{u}_1^+ + h_2^- \,\overline{u}_1^-\right)_{x_2} + 
\beta ( h_{1,x_1}^+ + h_{2,x_2}^+) -\beta( h_{1,x_1}^- + h_{2,x_2}^-) = 0 , \label{eq43a}
 \end{equation}
where $\left(\overline{u}_1^+(h_2^+ - R h_2^-) - h_2^+\, \overline{u}_1^+ + h_2^- \,\overline{u}_1^-\right)_{x_2} = 0$; thus, 
\begin{eqnarray} \label{boundary_condition}
\mathrm{div}(h^+)  = \mathrm{div}(h^-) \quad \mbox{at } \quad x_1 =0.
\end{eqnarray}
Now  we differentiate the equation \eqref{eq38a} (respectively, \eqref{eq38b}) with respect to $x_1$ (respectively, $x_2$), obtaining
$$\partial_t \mathrm{div}(h)  + \beta \partial_{x_1}\mathrm{div}(h) = 0.$$
Using that $\mathrm{div}(h)\Big|_{t=0} =0$ and the jump condition at $x_1 =0$ provided by the Rankine-Hugoniot condition \eqref{boundary_condition}, we see by the  characteristic method  that the only possible solution is $\mathrm{div}(h) = 0$ for $t>0$. 

\section{Guide to \cite{FT} and \cite{BT1} \# 2: a different approach towards the derivation of Rankine-Hugoniot conditions} \label{Rankine}

In this appendix we consider a different way to derive the Rankine-Hugoniot conditions upon linearization about a planar shock wave. There are several ways to do that, and in the context of MHD equations we refer to \cite{FT} and \cite{MeZ2}. As pointed out by Majda in \cite{Majda}, the study of stability of planar shocks reduces to a free boundary problem, in which a parameter measuring the deformation of the planar structure of the shock  is introduced. The main point of the analysis is to ``trade'' the latter unknown deformation by introducing a dynamic boundary condition at the linearized shock front. Now that we have given the rough idea we can put this heuristic on solid mathematical ground: assume initially that we are in a 2-D spatial domain. Throughout this section, the usual $l^2(\R^n)$ inner product is written  $\langle \cdot, \cdot\rangle$; a subindex $(\cdot)_a$  denotes the partial derivative with respect to the variable $a$, for $a \in \{t,x_1,x_2\}$. Let  $u(\cdot)$ be  a planar traveling wave with speed $s$ solving the following system of equations
\begin{eqnarray*} \label{dynamic_eq1}
 (f_0(u))_t + \sum_{i=1}^{2}[f_i(u)]_{x_i} = 0,
\end{eqnarray*}
where $u(x_1 -st) = u^{\pm}$ ($u^{\pm}$ are constants) whenever $x_1 -st \gtrless 0$ and $f_0$, $f_1$, $f_2 \in \mathscr{C}^{\infty}(\mathbb{R}; \mathbb{R}^n)$. The Rankine-Hugoniot conditions are given by
$$    -s [f_0(u)] + [f_1(u)] = 0, $$ where $[\cdot]$ denotes the jump across the shock.
We consider a perturbation of this system given by a function $u(\cdot) + v(\cdot)$, taking into account also perturbations in the shock front of the form $x_1 -\phi(x_2,t)=0$ for $\phi$ sufficiently smooth. In this case the Rankine-Hugoniot conditions are:
$$ - \phi_t[f_0(u+v)] - \phi_{x_2}[f_2(u+v)] + [f_1(u+v)]= 0.$$
As pointed out in Section \ref{Lopatinski_analysis}, we can take s=0. Linearizing the Rankine-Hugoniot conditions about the shock profile $u(\cdot)$ and the shock $x_1 -st=0 $  we have
\begin{equation} \label{dynamic_eq2}
 -\phi_t[f_0(u)] -\phi_{x_2}[f_2(u)] + [Df_1(u)v]= 0,
\end{equation}
where $Df_1(\cdot)$ denotes the Jacobian of the mapping $f_1$. We start with a trivial linear algebra result:
\begin{claim}\label{claim1}
We can choose  $n-2$  vectors $\{v_1, \ldots v_{n-2}\}$ in $\mathbb{R}^n$ so that $$\langle v_i , (-\phi_t[f_0(u)] -\phi_{x_2}[f_2(u)] )\rangle =0,$$ 
for all $i\in {1, \ldots , n-2}.$ Further, thanks to  \eqref{dynamic_eq2}, we must have that $\langle v_i , [Df_1(u)v] \rangle =0$.
\end{claim}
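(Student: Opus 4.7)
The plan is to treat this as a purely linear-algebraic observation about the codimension of the span of two fixed vectors in $\mathbb{R}^n$. The coefficients $\phi_t$ and $\phi_{x_2}$ are arbitrary scalars depending on the unknown front perturbation, but the vectors $[f_0(u)]$ and $[f_2(u)]\in\mathbb{R}^n$ are \emph{fixed} once the background shock is fixed. Hence any $v\in\mathbb{R}^n$ orthogonal to both of them will annihilate the combination $-\phi_t[f_0(u)]-\phi_{x_2}[f_2(u)]$ regardless of $\phi$.

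First I would set $W:=\operatorname{span}\{[f_0(u)],[f_2(u)]\}\subset\mathbb{R}^n$, so $\dim W\le 2$. Then $\dim W^{\perp}\ge n-2$, and I pick any linearly independent family $\{v_1,\dots,v_{n-2}\}\subset W^{\perp}$ (e.g.\ by completing to a basis and Gram--Schmidt). By construction,
\[
\langle v_i,[f_0(u)]\rangle=\langle v_i,[f_2(u)]\rangle=0, \qquad i=1,\dots,n-2,
\]
so $\langle v_i,-\phi_t[f_0(u)]-\phi_{x_2}[f_2(u)]\rangle=0$ for every choice of the scalars $\phi_t,\phi_{x_2}$. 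This proves the first assertion.

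For the second assertion, I would simply take the inner product of the linearized Rankine--Hugoniot identity \eqref{dynamic_eq2} with each $v_i$. The contribution from $-\phi_t[f_0(u)]-\phi_{x_2}[f_2(u)]$ vanishes by the first part, leaving $\langle v_i,[Df_1(u)v]\rangle=0$.

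There is no real obstacle here: the only mild subtlety is making sure that ``$n-2$'' is the sharp count, which uses the fact that in the generic (physically interesting) case $[f_0(u)]$ and $[f_2(u)]$ are linearly independent, giving exactly $\dim W=2$; in degenerate cases $\dim W^{\perp}>n-2$ and the conclusion holds \emph{a fortiori} by simply discarding extra vectors. The whole argument is one paragraph of linear algebra and requires no information about the specific structure of the MHD fluxes.
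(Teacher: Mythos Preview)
Your proposal is correct and matches the paper's own treatment: the paper introduces this claim explicitly as ``a trivial linear algebra result'' and does not supply a separate proof, simply using the $v_i$ as static constraints thereafter. Your orthogonal-complement argument is exactly the intended one-line justification.
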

Each one of these vectors provide a ``static'' constraint. It turns out that we can actually find another vector $\widetilde{w}$ - independent of $v_1, \ldots, v_{n-2}$ - that is also orthogonal to $-\phi_t[f_0(u)] -\phi_{x_2}[f_2(u)]$. In this case, however, since $\phi_t$ and $\phi_{x_2}$ have a dynamic behavior (i.e., time dependence), we must expect the same for $\widetilde{w}$. The idea consists of looking for a vector of the form $\widetilde{w}= r \partial_t + s\partial_{x_2}$, where $r, s \in \mathbb{R}^n$ are unknowns still to be found. Recall that $[f_0(u)]$  and $[f_1(u)]$ are constant vectors, since $u$ is constant on both sides of the shock. Our aim is to satisfy $\langle r \partial_t + s\partial_{x_2} , -\phi_t[f_0(u)] -\phi_{x_2}[f_2(u)] \rangle =0$; expanding the latter, we obtain the equivalent expression
$$ \phi_{tt} \langle r , [f_0(u)]\rangle  + \phi_{x_2 x_2}\langle s, [f_2(u)] \rangle +\phi_{t x_2} \left( \langle s , [f_0(u)]\rangle +  \langle r ,[f_2(u)] \rangle\right) =0.$$
In order to verify this formula, it suffices to  find $r$ and $s$ in $\mathbb{R}^n$ such that 
\begin{equation} \label{dynamic_eq3}
\begin{split}
 \langle s , [f_0(u)]\rangle +  \langle r ,[f_2(u)] \rangle &= 0; \\
 \langle r , [f_0(u)]\rangle  &= 0;  \\
 \langle s, [f_2(u)] \rangle  &= 0.
\end{split}
\end{equation}
Our next step consists of defining a projection $u \mapsto  \mathbb{P}_v(u)$ that projects the vector $u\in \R^n$ in the space spanned by the space $v\in \R^n$. The following properties of this operator, whose proofs we omit,  are standard results in linear algebra:
\begin{claim}\label{projectors}
 Given a vector $v \in \mathbb{R}^n$, $v \neq 0$ and defining the operator $\mathbb{R}^n \ni u \mapsto \mathbb{P}_v(u)$, then the following properties are satisfied:
 \begin{enumerate}[label=(\roman*), ref=\thetheorem(\roman*)]
\hangindent\leftmargin
  \item[i.] $\mathbb{P}_v(0) = 0$;
  \item[ii.] $\mathbb{P}_v(v) = v$;
  \item[iii.]\label{projectors:c} $\mathbb{P}_v(w) = 0 \quad \Leftrightarrow \quad v \perp w \quad \Leftrightarrow \quad \mathbb{P}_w(v) = 0.$
 \end{enumerate}
\end{claim}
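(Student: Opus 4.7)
The plan is to first fix an explicit formula for the operator $\mathbb{P}_v$, which the paper introduces only descriptively. The natural choice, consistent with the description ``projects the vector $u$ onto the space spanned by $v$,'' is the orthogonal projection
\begin{equation*}
\mathbb{P}_v(u) = \frac{\langle u,v\rangle}{\langle v,v\rangle}\, v, \qquad v\neq 0.
\end{equation*}
Once this formula is in hand, all three assertions reduce to direct computations using only bilinearity and symmetry of the standard inner product $\langle\cdot,\cdot\rangle$ on $\R^n$.

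For (i), I would substitute $u=0$ and use bilinearity of the inner product to get $\langle 0,v\rangle = 0$, hence $\mathbb{P}_v(0)=0$. For (ii), substituting $u=v$ yields $\mathbb{P}_v(v)=\frac{\langle v,v\rangle}{\langle v,v\rangle}v = v$, since $v\neq 0$ guarantees $\langle v,v\rangle>0$.

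For (iii), the key observation is that $\mathbb{P}_v(w)=0$ if and only if the scalar coefficient $\langle w,v\rangle/\langle v,v\rangle$ vanishes, which (again using $v\neq 0$) is equivalent to $\langle w,v\rangle=0$, i.e., $v\perp w$. Then symmetry of the inner product, $\langle w,v\rangle=\langle v,w\rangle$, gives the reverse equivalence $v\perp w \iff \mathbb{P}_w(v)=0$ by the same reasoning applied with the roles of $v$ and $w$ exchanged (note this step requires $w\neq 0$, which is implicit once we are in the nontrivial case; the degenerate case $w=0$ makes both sides of the equivalence trivially true).

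There is no real obstacle here: the statement is a textbook lemma about orthogonal projections, and the only thing to be careful about is the convention that $\mathbb{P}_v$ is defined only for $v\neq 0$, so that the denominator $\langle v,v\rangle$ is nonzero throughout. The claim is invoked later only to extract the implications ``$\mathbb{P}_v(u)=0 \iff v\perp u$'' and its symmetric counterpart, so (iii) is really the content; (i) and (ii) are recorded for bookkeeping. Accordingly, I would keep the written proof to a few lines, presenting the projection formula, then verifying the three items in order.
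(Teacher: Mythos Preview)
Your proposal is correct and is exactly the standard argument one would give; the paper itself omits the proof entirely, noting that these ``are standard results in linear algebra.'' Your handling of the degenerate case $w=0$ in (iii) is a nice touch, since strictly speaking $\mathbb{P}_w$ is only defined for $w\neq 0$.
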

Let $[f_0]^{\perp}\in \mathbb{R}^n$ (respectively, $[f_1]^{\perp}\in \mathbb{R}^n )$  be defined so that $\{[f_0]^{\perp},[f_0],  v_1, \ldots, v_{n-2}\}$  (respectively,  $\{[f_1]^{\perp},[f_1],  v_1, \ldots, v_{n-2}\}$) spans $\mathbb{R}^n$.
Define  
\begin{align}\label{setting_mu}
r =  \mathbb{P}_{[f_0]^{\perp}}([f_2]) \quad  \mbox{and} \quad  s =  \widetilde{\mu} \mathbb{P}_{[f_2]^{\perp}}([f_0]),
\end{align}
where $\widetilde{\mu} \in \mathbb{R}$ will be defined later. Without loss of generality, we assume that  $\mathbb{P}_{[f_0]^{\perp}}([f_2])\neq 0$ and $\mathbb{P}_{[f_2]^{\perp}}([f_0])\neq 0$; indeed, using Claim \ref{projectors}(iii) and  the fact that $\{v_1, \ldots v_{n-2}\} \perp \mathrm{span}\{[f_0], [f_1] \}$,  if  one of these projections is vanishes then the $[f_0]$ and $[f_1]$ are linearly dependent, thus the choice of a $(n-1)^{th}$ orthogonal vector is reduced to a trivial problem.  On the other hand, it is easy to see that the last two equations in \eqref{dynamic_eq3} are satisfied. So we proceed as follows: we plug $r$ and $s$ as defined in \eqref{setting_mu} in the first equation of \eqref{dynamic_eq3} to find that $\widetilde{\mu}$ should be
$$ \widetilde{\mu} =  - \frac{\langle \mathbb{P}_{[f_0]^{\perp}}([f_2]) ,[f_2(u)]\rangle }{\langle \mathbb{P}_{[f_2]^{\perp}}([f_0]) , [f_0(u)]\rangle}.  $$ 
Applying the result to \eqref{dynamic_eq2} we see that $\displaystyle{\langle r \partial_t + s\partial_{x_2} , -\phi_t[f_0(u)] -\phi_{x_2}[f_2(u)] \rangle =  0}$, which implies that
\begin{align} \label{dynamic_eq4}
\langle r \partial_t + s\partial_{x_2} ,[Df_1(u)v] ]  \rangle =0.
\end{align}
 The latter equation is called  a \textit{dynamic Rankine-Hugoniot condition}; notice that it is independent of the perturbation $\phi$ of the shock front.   

 \br A generalization of the results in this appendix to cases with a higher number of vectors is a bit tricky. Indeed,
  one needs to find more  vectors $r_{j,k},    1\leq j\leq d-1,  1\leq k \leq d-1,$ corresponding to $d-1$ dynamic conditions  (this in the generic case that $[f_0], [f_2],...,[f_d] $ are independent, so that there are $n- d$ static conditions and in total there will be the needed $n-1$ total conditions for extreme shock). These must have similar properties to those in \eqref{dynamic_eq3}, that is: a)  $ r_k(\lambda, \eta):=\lambda r_{1,i} + i\eta r_{2,k} +$ ... $i\eta_d r_{d,k} ,    1\leq k\leq d-1$ be independent for each $(\lambda,\eta)\neq 0$,  and b)  $\langle r_k(\lambda,\eta), \lambda[f_0]+ \eta_2[f_2] +...\rangle=0.$ Now, the case $d=2$ is easy, because we only need find one of these vectors, and there are $d + d(d-1)/2$ homogeneous constraints, so we can always find a nontrivial solution;  to see that it is non-vanishing for all $(\lambda, \eta)$ one just looks and sees a contradiction if one entry but not the other is vanishing. However, when $d\geq 3$, this is not a simple task. Furthermore,  there are degenerate cases where $[f_0], [f_2],...,[f_d]$ are not independent, which need to be treated slightly differently also it seems.
In conclusion: the method works for the current purposes. In spite of its limitations, it illustrates a case where one can find  Rankine-Hugoniot conditions explicitly, even when the shock front is an unknown.
 \er
 
\subsection{The dynamic jump condition of \cite{FT}}

Recall  that $ u_{2}^{\pm} = 0$, $ h_2^{\pm} = 0.$  We make use of equations \eqref{eq1} in order to derive the vectors $f_0$, $f_1$ and $f_2$. 
\begin{eqnarray*}
 f_0 = \left(\begin{array}{c}
        \rho  \\
        \rho u_1 \\
        \rho u_2\\
        h_1 \\
        h_2
       \end{array}\right), \quad f_1 = \left(\begin{array}{c}
                                \rho u_1 \\
                                \rho u_1^2 - \frac{h_1^2}{2} + \frac{h_2^2}{2} + a\rho^{\gamma} \\
                                \rho u_1u_2  - h_1 h_2 \\
                                0 \\
                                u_1 h_2 - h_1 u_2
                               \end{array} \right) , \quad f_2 = \left(\begin{array}{c}
                                                                               \rho u_2 \\
										 \rho u_1 u_2  - h_1 h_2  \\
										 \rho u_2^2  + a\rho^{\gamma} + \frac{h_1^2}{2} -\frac{h_2^2}{2} \\
										  u_2 h_1 - h_2 u_1 \\
										  0
                                                                              \end{array}\right).
                            \end{eqnarray*}
The jumps across the shock are:
\begin{eqnarray*}
 [f_0] = \left(\begin{array}{c}
       \brh^+ - \brh^-\\
       0\\
       0\\
       0\\
       0\\
       \end{array}\right),\quad [f_2] = \left(\begin{array}{c}
                                                                                                 0\\
                                                                                                 0\\
                                                                                                 a \left\{(\brh^+)^{\gamma} - (\brh^-)^{\gamma}\right\} \\
                                                                                                 0\\
                                                                                                 0\\
                                                                              \end{array}\right).
                            \end{eqnarray*}
It is easy to find the vectors mentioned in claim \ref{claim1}: $v_1 = e_2$, $v_2 =e_4$ and $v_3 = e_5$. To calculate the static Rankine-Hugoniot conditions we need the Jacobian of $f_1$,  $Df_1((\rho, u_1, 0, h_1, 0))$:
\begin{eqnarray*}
 Df_1(\brh, \bu_1, 0, \bh_1, 0) = \left(\begin{array}{ccccc}
       \bu_1 & \brh  & 0 & 0& 0 \\
       \bu_1^2 + a\gamma\brh^{\gamma-1} & 2\brh \, \bu_1 & 0 & -\bh_1 & 0\\
       0  & 0 & \brh \, \bu_1& 0 & -\bh_1 \\
       0 & 0 & 0&0 &0 \\
       0 & 0 & -\bh_1& 0& \bu_1\\
       \end{array}\right).
       \end{eqnarray*}
Using the notation in \cite[\S 3.2]{FT}, the result in \ref{claim1} provides the static Rankine-Hugoniot conditions. The persistence of the divergence free condition (Section \ref{mathematical_settings} and Appendix \ref{appendix_FT_proof}) implies that
 \begin{eqnarray}\label{RH_eq0}
\left[\overline{h}_1\right]=0.
 \end{eqnarray}
Let $\mathcal{V} =(\rho, u_1,u_2 , h_1, h_2)$. We use $\displaystyle{\langle v_1, [Df_1(\brh, \bu_1, 0, \overline{h}_1, 0) \mathcal{V})]\rangle =0 }$ to get 
 \begin{multline} \label{RH_eq1}
\bu_1^+\left(1 + \frac{1}{M^2}\right)\rho^+ - \bu_1^+R^2\left(1 + \frac{1}{M^2R^{\gamma +1}} \right) \rho^- + 2\brh^+(u_1^+ -u_1^-) - \frac{\overline{h}_1}{(\bu_1^+)}(h_1^+ - h_1^-) =0.  \end{multline}
The condition
\begin{eqnarray}\label{RH_eq2}
\langle v_2, [Df_1(\brh, \bu_1, 0, \overline{h}_1, 0) \mathcal{V})]\rangle =0
\end{eqnarray}
does not provide anything relevant (since the $4^{th}$ row of $Df_1$ is zero), whereas the last static condition,  
 $
\langle v_3, [Df_1(\brh, \bu_1, 0, \overline{h}_1, 0) \mathcal{V})]\rangle =0 $ gives
\begin{eqnarray}\label{RH_eq3}
-\overline{h}_1(u_2^+ -u_2^-) + \bu_1^+h_2^+ - \bu_1^-h_2^- =0. 
 \end{eqnarray}
\br
If we use the preserved constraint and appropriate normalization (see \cite[\S 3.1]{FT}), equation \eqref{RH_eq1} yields the first Rankine-Hugoniot condition in \cite[Equation (44)]{FT}, while equation \ref{RH_eq3} corresponds to the last equation in \cite[Equation (44)]{FT}. 
\er
Now we derive the dynamic Rankine-Hugoniot condition. We begin by calculating the projections mentioned at claim \ref{projectors}:
\begin{eqnarray*}
 \mathbb{P}_{[f_0]{\perp}}([f_2]) = [f_2] - \frac{\langle[f_0],[f_2] \rangle }{\langle[f_2],[f_2] \rangle }  \overbrace{=}^{[f_0]\perp[f_2]} [f_2], \qquad  \mathbb{P}_{[f_2]{\perp}}([f_0]) = [f_0] - \frac{\langle[f_2],[f_0] \rangle }{\langle[f_0],[f_0] \rangle }  \overbrace{=}^{[f_0]\perp[f_2]} [f_0].
\end{eqnarray*}
Set $r = [f_2]$  and $s = \lambda [f_0]$, where $\lambda$ is defined by $\displaystyle{\lambda  = -\frac{\langle[f_2],[f_2] \rangle }{\langle[f_0],[f_0] \rangle }}.$
The dynamic Rankine-Hugoniot condition will be given by equation \eqref{dynamic_eq4}, $\displaystyle{\langle [f_2]\partial_t + \lambda[f_0]\partial_{x_2},[Df_1(\brh, \bu_1, 0, \overline{h}_1, 0) \mathcal{V})]\rangle =0}$, i.e., %
\begin{eqnarray*}
\langle [f_2], [Df_1(\brh, \bu_1, 0, \overline{h}_1, 0)\partial_t \mathcal{V}] \rangle +  \lambda\langle[f_0],[Df_1(\brh, \bu_1, 0, \overline{h}_1, 0) \partial_{x_2}\mathcal{V})]\rangle= 0. 
\end{eqnarray*}
Define the scalars $\mathcal{A}_1$ and $\mathcal{A}_3$ so that  $[f_0] = \mathcal{A}_1 e_1$, $[f_2] = \mathcal{A}_3 e_3$; it follows that  $\mathcal{A}_1 = \brh^+(\frac{R-1}{R})$ and, by equation \eqref{RH_shock}, $\mathcal{A}_3 = -\brh^+(\bu_1^+)^2 (1-R)$. Clearly, $\lambda = -(\mathcal{A}_3/\mathcal{A}_1)^2$. After some computations, the above equation is reduced to 
\begin{eqnarray*}
 \mathcal{A}_1 \left\{(u_2^+ - u_2^-) -\frac{q}{M^2}(h_2^+ - h_2^-) \right\}_t  - \frac{\mathcal{A}_3}{\bu_1^+}\left\{ (\rho^+  - R\rho^-) +(u_1^+ -\frac{u_1^-}{R})\right\}_{x_2}  =0.
\end{eqnarray*}
Following \cite[\S 3.2]{FT}, we use the normalization  $\widetilde{t} \simeq t\bu_1^+$ to rewrite  the previous equation  as
\begin{eqnarray*}
 \mathcal{A}_1 \left\{(u_2^+ - u_2^-) -\frac{q}{M^2}(h_2^+ - h_2^-) \right\}_{\widetilde{t}}  - \frac{\mathcal{A}_3}{(\bu_1^+)^2}\left\{ (\rho^+  - R\rho^-) +(u_1^+ -\frac{u_1^-}{R})\right\}_{x_2}  =0, \end{eqnarray*}
 or, equivalently,
 \begin{align}\label{appendix_temp1}
  \frac{(R-1)}{R} \left\{(u_2^+ - u_2^-) -\frac{q}{M^2}(h_2^+ - h_2^-) \right\}_{\widetilde{t}}  -(R-1)\left\{ (\rho^+  - R\rho^-) +(u_1^+ -\frac{u_1^-}{R})\right\}_{x_2}  =0.
 \end{align}
At this point we make some substitutions. First, we solve the Rankine-Hugoniot condition \eqref{RH_eq3} 
for  $(u_2^+ - u_2^-)$ and substitute the result in the first  term of \eqref{appendix_temp1}. Then  we solve \eqref{RH_eq1} for ${u}_1^+$,  giving
\begin{equation*}
 u_1^+ = u_1^- - \frac{\bu_1^+}{2\brh^+}\left(1 + \frac{1}{M^2}\right)\rho^+ +\frac{\bu_1^+}{2\brh^+}R^2\left(1 + \frac{1}{M^2R^{\gamma +1}} \right) \rho^-+ \mathcal{G}(M, R, \gamma,  \bu_1^+,\brh^{\pm},h_1^{\pm}),
\end{equation*}
and plug the result in the second term of \eqref{appendix_temp1}; note that $\mathcal{G}$ is independent of  $u_1^{\pm}$ and $\rho^{\pm}$. This gives

%
\begin{eqnarray*}
 \left\{\frac{1}{q}(h_2^+ - Rh_2^-) - \frac{q}{M^2}(h_2^+ - h_2^-) \right\}_{\widetilde{t}} - \left\{ R(1- b_1)\rho^+ + R(b_2 -R)\rho^- + (R-1)u_1^-\right\}_{x_2} =0,
 \end{eqnarray*}
 where $\displaystyle{ b_1 = \frac{\bu_1^+}{2\brh^+}\left(1 + \frac{1}{M^2}\right)}$ and $\displaystyle{b_2 =\frac{\bu_1^+}{2\brh^+}R^2\left(1 + \frac{1}{M^2R^{\gamma +1}} \right)} $. Multiplying by $\displaystyle{\frac{M^2}{M^2  -q^2 }}$, we obtain the \textit{dynamic Rankine-Hugoniot condition}
%
 %
 %
 \begin{equation}\label{dynamic_RH_1}
 \left\{\frac{1}{q}(h_2^+) + \frac{q^2 - RM^2}{q(M^2 - q^2)}( h_2^-) \right\}_{\widetilde{t}}  
 + \left\{ b_3\rho^+ + b_4\rho^- + b_5 u_1^-\right\}_{x_2} =0,
 \end{equation}
 where  $\displaystyle{ b_3 = -R(\frac{( M^2 -1 )}{(M^2 -q^2)}\rho^+}$, $\displaystyle{b_4 = -\frac{RM^2}{(M^2 -q^2)}(b_2 -R)}$, and $\displaystyle{b_5 = -\frac{M^2(R-1)}{M^2 -q^2}}$. 
 
 This final equation  does not agree with the \textit{dynamic Rankine-Hugoniot condition} \cite[Equation (44)(ii)]{FT} of Freist\"uhler \& Trakhinin. For, according to their calculations,
\begin{equation*}
 \left\{u_2^+  - u_2^- \right\}_{\widetilde{t}}  
 + \left\{ b_3\rho^+ + b_4\rho^- + b_5 u_1^-\right\}_{x_2} =0 \quad \mbox{and} \quad h_2^+ - Rh_2^- - q u_2^+  +qu_2^- =0  \quad \mbox{at} \quad x_1 =0
 \end{equation*}
 which implies that 
 \begin{equation}\label{dynamic_RH_2}
 \left\{\frac{1}{q}(h_2^+ - Rh_2^-) \right\}_{\widetilde{t}}  
 + \left\{ b_3\rho^+ + b_4\rho^- + b_5 u_1^-\right\}_{x_2} =0 \quad \mbox{at} \quad x_1 =0.
 \end{equation}
A more careful analysis  shows that Equation  \eqref{dynamic_RH_2} is not a linear combination of \eqref{dynamic_RH_1} and the other jump conditions in \eqref{RH_eq0}-\eqref{RH_eq3} and the interior equations evaluated on $x_1 =0$. Indeed, if that were the case then $\partial_{\widetilde{t}} \{h_2^-\} =0$. However there is no way of obtaining $h_2^-$ as a linear combination of  the jump conditions (because it would introduce a linear term in $h_2^+$) or interior equations evaluated on $x_1 =0$ (because it would introduce variables that are spatial derivatives in $x_1$). One can conclude that the the dynamic Rankine-Hugoniot condition in \cite[Equation (44)]{FT} is incorrect. 

This small  error  did not do much harm to the main calculations of Freist\"uhler \& Trakhinin. In particular they obtained they obtained the correct order for the root of the Lopatinsky determinant (see \cite[Equation (61)]{FT}), since in the large magnetic field we have $q \to \infty$ and the coefficients of \eqref{dynamic_RH_1} and \eqref{dynamic_RH_2} only 
differ by an $\mathcal{O}(\eps^3)$ term.

\br
In the $\beta$-model the derivation of the Rankine-Hugoniot conditions using the reasoning presented in this section follows similar lines; the main differences are that
\begin{eqnarray*}
 f_0 = \left(\begin{array}{c}
        \rho  \\
        \rho u_1 \\
        \rho u_2\\
        h_1 \\
        h_2
       \end{array}\right), \quad f_1 = \left(\begin{array}{c}
                                \rho u_1 \\
                                \rho u_1^2 - \frac{h_1^2}{2} + \frac{h_2^2}{2} + a\rho^{\gamma} \\
                                \rho u_1u_2  - h_1 h_2 \\
                                \beta h_1\\
                                u_1 h_2 - h_1 u_2
                               \end{array} \right) , \quad f_2 = \left(\begin{array}{c}
                                                                               \rho u_2 \\
										 \rho u_1 u_2  - h_1 h_2  \\
										 \rho u_2^2  + a\rho^{\gamma} + \frac{h_1^2}{2} -\frac{h_2^2}{2} \\
										  u_2 h_1 - h_2 u_1 + \beta h_2\\
										  0
                                                                              \end{array}\right).
                            \end{eqnarray*}
The jump across the shock are:
\begin{eqnarray*}
 [f_0] = \left(\begin{array}{c}
       \brh^+ - \brh^-\\
       0\\
       0\\
       0\\
       0\\
       \end{array}\right),\quad [f_2] = \left(\begin{array}{c}
                                                                                                 0\\
                                                                                                 0\\
                                                                                                 a \left\{(\brh^+)^{\gamma} - (\brh^-)^{\gamma}\right\} \\
                                                                                                 0\\
                                                                                                 0\\
                                                                              \end{array}\right),
                            \end{eqnarray*}
and finally,
\begin{eqnarray*}
 Df_1((\brh, \bu_1, 0, \bh_1, 0)) = \left(\begin{array}{ccccc}
       \bu_1 & \brh  & 0 & 0& 0 \\
       \bu_1^2 + a\gamma\brh^{\gamma-1} &2\brh \, \bu_1 & 0 & -\overline{h}_1 & 0\\
       0  & 0 & \brh \, \bu_1& 0 & -\overline{h}_1 \\
       0 & 0 & 0&\beta &0 \\
       0 & 0 & -\overline{h}_1& 0& \bu_1\\
       \end{array}\right).
       \end{eqnarray*}
       Furthermore, unlike the case presented before, in equation \ref{RH_eq2} we would obtain $h_1^+ = h_1^-$, which we already know for MHD due to persistence of the constraint \eqref{constraint} (see Section \ref{persistence_constraint}).                            
\er
%


\bibliographystyle{plain}
\bibliography{refs}
\end{document}